\newcommand\numberthis{\addtocounter{equation}{1}\tag{\theequation}}
\newcounter{counterEnvMain}
\newcounter{counterEnvDefault}
\newcounter{strat}
\numberwithin{counterEnvDefault}{section}
\theoremstyle{plain}
\newtheorem{lemma}[counterEnvDefault]{Lemma}
\newtheorem*{lemma*}{Lemma}
\newtheorem{strategy}[strat]{Exploration}
\newtheorem{maintheorem}[counterEnvMain]{Theorem}
\newtheorem*{theorem*}{Theorem}
\newtheorem{proposition}[counterEnvDefault]{Proposition}
\newtheorem{corollary}[counterEnvDefault]{Corollary}
\theoremstyle{definition}
\newtheorem{definition}[counterEnvDefault]{Definition}
\newtheorem*{definition*}{Definition}
\newtheorem{remark}[counterEnvDefault]{Remark}
\newtheorem*{claim*}{Claim}
\newtheorem*{assertion*}{Assertion}
\newtheorem*{proposition*}{Proposition}
\renewcommand\phi\varphi
\renewcommand\epsilon\varepsilon
\definecolor{colorlinks}{RGB}{0, 24, 168}
\definecolor{colorcites}{RGB}{124, 10, 2}
\newcommandx\work[2][1=]{\todo[linecolor=RoyalBlue,backgroundcolor=RoyalBlue!25,bordercolor=RoyalBlue,#1]{\textsc{todo} #2}}
\newcommandx\comment[2][1=]{\todo[linecolor=OliveGreen,backgroundcolor=OliveGreen!25,bordercolor=OliveGreen,#1]{\textsc{comment} #2}}
\newcommandx\mistake[2][1=]{\todo[linecolor=red,backgroundcolor=red!25,bordercolor=red,#1]{\textsc{mistake} #2}}
\newcommandx\improve[2][1=]{\todo[linecolor=orange,backgroundcolor=orange!25,bordercolor=orange,#1]{\textsc{improve} #2}}
\newcommandx\change[2][1=]{\todo[linecolor=yellow,backgroundcolor=yellow!25,bordercolor=yellow,#1]{\textsc{change} #2}}
\newcommandx\mem[2][1=]{\todo[linecolor=orange,backgroundcolor=orange!25,bordercolor=orange,#1]{\textsc{mem} #2}}
\newcommandx\status[2][1=]{\todo[linecolor=Blue,backgroundcolor=Blue!25,bordercolor=Blue,#1]{\textsc{Status} #2}}
\newcommand\hidetodos{
    \renewcommandx\todo[2][1=]{}
    \renewcommandx\work[2][1=]{}
    \renewcommandx\comment[2][1=]{}
    \renewcommandx\mistake[2][1=]{}
    \renewcommandx\improve[2][1=]{}
    \renewcommandx\change[2][1=]{}
    \renewcommandx\mem[2][1=]{}
    \renewcommandx\status[2][1=]{}
}
\newcommand\blank{\,\cdot\,}
\newcommand\Cov{\operatorname{Cov}}
\newcommand\C{\mathbb C}
\newcommand\E{\mathbb E}
\newcommand\F{\mathbb F}
\newcommand\M{\mathbb M}
\renewcommand\P{\mathbb P}
\newcommand\R{\mathbb R}
\renewcommand\S{\mathbb S}
\newcommand\Z{\mathbb Z}
\newcommand\CP{\mathcal P}
\newcommand\FS{\mathfrak S}
\newcommand\Fm{\mathfrak m}
\newcommand{\ubar}[1]{\underaccent{\bar}{#1}}
\begin{document}
\makeatletter
\@namedef{subjclassname@2020}{\textup{2020} Mathematics Subject Classification}
\makeatother
\newcommand\corr{\xi}
\newcommand\corrXY[1]{\corr_{\operatorname{XY}}(#1)}
\newcommand\corrHeight[1]{\corr_{\operatorname{Height}}(#1)}
\newcommand\SigCov{\operatorname{SigCov}}
\newcommand\betaBKT{\beta_c}
\newcommand\massXY[1]{\Fm_{\operatorname{XY}}(#1)}
\newcommand\massHeight[1]{\Fm_{\operatorname{Height}}(#1)}
\newcommand\massVil[1]{\Fm_{\operatorname{Villain}}(#1)}
\newcommand\massGaus[1]{\Fm_{\operatorname{Gauss}}(#1)}
\newcommand\Sign[1]{\operatorname{Sign}(#1)}
\newcommand\stripXY[1]{\FS_{#1}}
\newcommand\n{{\bf n}}
\newcommand\Poisson[1]{\operatorname{Poisson}[#1]}
\newcommand\Exp[1]{\operatorname{Exp}[#1]}
\newcommand\GammaLoop[1]{\Gamma_{\operatorname{Loop},#1}}
\newcommand\limpart{Z_\infty}

\title{Bijecting the BKT transition}
\subjclass[2020]{Primary 82B20, 82B26, 82B41}
\author{Piet Lammers}
\keywords{%
    Berezinskii--Kosterlitz--Thouless transition,
    XY model,
    height functions,
    mass,
    correlation length}
\address{Institut des Hautes \'Etudes Scientifiques}
\email{lammers@ihes.fr}
\begin{abstract}
    We consider the \emph{classical XY model} (or \emph{classical rotor model}) on
the two-dimensional square lattice graph as well as its dual model, which is a
model of \emph{height functions}.  The XY model has a phase transition called
the \emph{Berezinskii--Kosterlitz--Thouless transition}. There is a heuristic
which predicts that this phase transition should coincide with the 
\emph{localisation-delocalisation transition} for height functions: the primal
and dual model share the same partition function, and the phase transition of
either model should coincide with the unique non-analytic point of the partition
function when expressed in terms of the inverse temperature. We use
probabilistic arguments to prove that the correlation length (the reciprocal of the mass) of the XY model is
exactly twice the correlation length of the height function, which implies in
particular that the prediction of this heuristic is correct: namely, that the
BKT phase for the XY model coincides exactly with the delocalised phase of the
dual height function.

\end{abstract}
\maketitle

\setcounter{tocdepth}{1}
\tableofcontents

\comment{Verify that the radius-squared is always divided by $2$ when appearing in $\M$.}

\section{Introduction}

\subsection{Main result}

We consider the \emph{classical XY model} (or \emph{plane rotor model})
on the two-dimensional square lattice graph at any \emph{inverse
temperature} $\beta\in[0,\infty)$, as well as the dual model,
which is a model of integer-valued \emph{height functions} on the faces of the
square lattice.
Both models are formally described in Subsection~\ref{subsec:definitions}.
The XY model has two phases: the two-point function decays either \emph{exponentially
fast} or \emph{polynomially fast} in the distance between the two points.
The polynomial decay phase is also called the
\emph{Berezinskii--Kosterlitz--Thouless (BKT) regime}.
The height functions model also has two phases:
either the height function is \emph{localised} (with exponential decay of the
covariance in the distance between the points at which the height is measured), or it is \emph{delocalised}
(with logarithmic growth of the pointwise variance in the distance to the boundary).
The \emph{correlation length} $\corr(\beta)\in[0,\infty]$ is a physical quantity in either model
which measures the rate of decay in the exponential decay regime,
and blows up in the BKT regime (of the XY model) and the delocalised regime
(of the height function) respectively.
The following theorem is the main result of this article.

\begin{maintheorem}
    \label{thm:main_corr}
    For any fixed inverse temperature $\beta\in[0,\infty)$,
    we have \[
        \corrXY{\beta}=2\corrHeight{\beta}.
        \]
    In particular, the XY model
    exhibits polynomial decay of the two-point function (that is, it is in the
    BKT regime) if and only if the dual height function is delocalised.
\end{maintheorem}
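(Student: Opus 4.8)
The plan is to reduce the claim to an identity between the two \emph{masses} $\massXY\beta:=\corrXY\beta^{-1}$ and $\massHeight\beta:=\corrHeight\beta^{-1}$ (with the convention that a vanishing mass corresponds to an infinite correlation length), namely $\massXY\beta=\tfrac12\massHeight\beta$; the second assertion of the theorem then follows from the dichotomies recalled in the introduction (exponential versus polynomial decay of the XY two-point function, localisation versus delocalisation of the height function), since both are governed by the vanishing of the respective mass. The first substantive step is to set up the duality in a form that makes the height function visible inside the XY two-point function: expanding the XY weights in characters and integrating out the angles rewrites the partition function, and $\langle\cos(\theta_0-\theta_x)\rangle$, as sums over integer currents that are divergence-free away from a prescribed source, and on a simply connected domain such currents are exactly the gradients of the dual integer-valued height function, the reference current carrying the source becoming a fixed defect. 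This identifies $Z_{\mathrm{XY}}$ with the height-function partition function and, crucially, $\langle\cos(\theta_0-\theta_x)\rangle$ with a height-function \emph{defect correlator} $D_{u,v}$: the expectation, under the height-function measure, of the weight ratio obtained by forcing the height to jump by one unit across a fixed dual path joining the face $u$ next to $0$ to the face $v$ next to $x$. After this step the theorem is equivalent to the comparison $\Cov(h_u,h_v)\asymp D_{u,v}^2$, up to subexponential factors in $|u-v|$, between the height two-point function defining $\massHeight\beta$ and the square of the defect correlator.

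I would prove this comparison as two one-sided estimates, and the factor $2$ is produced by a single geometric fact: an \emph{open} defect line between $u$ and $v$ has length $|u-v|$, whereas any \emph{closed} interface of the height function that feels both $u$ and $v$ must run from a neighbourhood of $u$ to a neighbourhood of $v$ and back, hence has length at least $2|u-v|-O(1)$, while carrying the same line tension per unit length (which, after subadditivity, equals $\massXY\beta$). For the inequality $\Cov(h_u,h_v)\gtrsim D_{u,v}^2$ — equivalently $\massHeight\beta\le 2\massXY\beta$ — one glues two independent unit defect configurations from $u$ to $v$ into a single configuration in which $h$ is shifted on a region enclosed by a loop through neighbourhoods of both points, which contributes positively to $\Cov(h_u,h_v)$; turning this heuristic into an inequality uses the monotonicity (Ginibre/FKG-type positive association) of the height-function measure, noting that the line tension of a defect does not depend on which of the two endpoints is enclosed. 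For the reverse inequality $\Cov(h_u,h_v)\lesssim D_{u,v}^2$ — equivalently $\massHeight\beta\ge 2\massXY\beta$ — one must show that every contribution to $\Cov(h_u,h_v)$ is carried by a connected interface structure visiting neighbourhoods of both $u$ and $v$, and that the probability of such a structure is bounded by a subexponential factor times the square of a defect correlator.

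I expect this reverse inequality to be the main obstacle. The point to overcome is that the interface joining $u$ and $v$ need not be a single long loop: a priori it could be a chain of small loops or a large low-density cluster, and one has to certify that no such configuration decays more slowly than $D_{u,v}^2$. In the spirit of the probabilistic arguments the paper advertises, I would handle this via an exploration/coupling: condition on the interface cluster touching a neighbourhood of $u$, and show — by a monotone coupling with a pair of defect configurations, or by a union bound over the possible clusters weighted by an exponential in their size with the correct rate — that the further requirement of reaching $v$ costs at least a second factor $D_{u,v}$ up to subexponential corrections. The remaining bookkeeping is routine but must be done carefully: establishing by subadditivity that the limits defining $\massXY\beta$ and $\massHeight\beta$ exist and that the line tension appearing above is exactly $\massXY\beta$, checking that all error terms are genuinely subexponential in $|u-v|$ uniformly, and working throughout in a finite volume (box or torus) on which the duality and the couplings have been set up compatibly before passing to the infinite-volume limit.
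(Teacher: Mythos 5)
Your high-level plan is the same as the paper's: reduce the theorem to the mass identity $\massXY{\beta}=\tfrac12\massHeight{\beta}$, prove it as two one-sided inequalities, and account for the factor $2$ by the observation that a closed loop enclosing both points is roughly twice as long as an open path joining them. The paper makes exactly this heuristic precise. Your sketch of the lower bound $\massHeight{\beta}\le 2\massXY{\beta}$ (gluing two defect configurations into a loop and invoking Ginibre-type positivity) is also in the right spirit: the paper's Sections~\ref{sec:strip}--\ref{sec:heightcorrlength} do this by extracting a strip-crossing BFS walk segment from the two-point function, gluing several such segments together via the Ginibre inequality into a long loop, and then using a cycle decomposition of the Poisson current to show that this loop forces the presence of a single cycle surrounding both faces, whose expected count equals the height covariance (Lemma~\ref{lemma:cov}).

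There is, however, a genuine gap in the reverse inequality $2\massXY{\beta}\le\massHeight{\beta}$, which you correctly flag as the main obstacle but do not resolve. Your proposed devices---a monotone coupling after conditioning on the interface cluster near $u$, or a union bound over clusters weighted exponentially in their size---are not concrete enough to verify and, as stated, are unlikely to work: the height-function interfaces do not have an obvious spatial Markov structure that would justify the conditioning step, and a union bound requires a priori control on the law of cluster sizes that you have not established. The paper circumvents both issues by staying on the \emph{XY side} of the duality rather than the height-function side: it (i) replaces $\Cov$ by the \emph{sign covariance} $\SigCov$, shown via the Ising/FK coupling and log-concavity of the marginals to define the same mass (Section~\ref{sec:heights}); (ii) bounds $\SigCov$ by the probability that a single cycle of the Poisson cycle decomposition surrounds both faces; and (iii) bounds that probability by $2\langle\sigma_x\bar\sigma_y\rangle^2$ using a BFS-walk exploration started at $x$ together with a time-inversion symmetry and the Ginibre inequality to factorize the walk into two two-point functions. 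Step (iii) is the crux that produces the square of the defect correlator in a controlled way, and it does not appear in your sketch; the ``chain of small loops'' worry you raise is dispatched not by a combinatorial bound but by the reformulation in terms of a single cycle in (ii) and by Proposition~\ref{pro:loop_decomp_lemma}, which guarantees that the BFS walk from $x$ must reach $y$ whenever such a cycle exists. Without these ingredients---or something equivalent---the reverse inequality remains open in your proposal.
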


The correlation length of the XY model is non-decreasing in $\beta$ 
due to the Ginibre inequality, and 
it is known that there exists a unique critical inverse 
temperature $\beta_c\in(0,\infty)$ such that the BKT regime $\{\beta\in[0,\infty):\corrXY{\beta}=\infty\}$ is precisely
the set of inverse temperatures $[\beta_c,\infty)$.
In this article we shall not work with the correlation length directly but rather
with its reciprocal called the \emph{mass}, defined rigorously below
(Subsection~\ref{subsec:phtwo}).
Although all proofs are stated for the square lattice graph,
Theorem~\ref{thm:main_corr} extends (with almost the same proof) to quasiperiodic planar
graphs having at least one reflection symmetry and one symmetry under rotation
by some angle $\alpha\in(0,\pi)$;
see~\cite[Subsection~1.3]{arXiv.2211.14365}.
In particular, our main result is valid on the triangular lattice
which is special; see~\cite{aizenman2021depinning,van2022elementary}.

Subsection~\ref{subsec:historical} gives the general context of Theorem~\ref{thm:main_corr},
Subsection~\ref{subsec:definitions} provides formal definitions of the two models,
and Subsection~\ref{subsec:phtwo} describes known results on the two phase
transitions.
Subsection~\ref{subsec:proof_overview} outlines the proof of Theorem~\ref{thm:main_corr},
which is contained in the remaining sections.
\comment{May say something about the correlation length in other directions,
and may also dedicate new section to this.}

\subsection{Context}
\label{subsec:historical}

Berezinskii~\cite{berezinskii1971destruction} and independently Kosterlitz
and Thouless~\cite{kosterlitz1973ordering} predicted a new type of phase transition
in the XY model in 2D, which is now known as the \emph{BKT transition} in their honour.
Mathematically, this phase transition is described in terms of the decay of the two-point function:
the decay is either exponential or polynomial in the distance between the two vertices.
Remarkable is the absence of an ordered phase, the proof of which goes back
to Mermin and Wagner~\cite{mermin1966absence}.
The phase transition has been described in various other ways.
At all temperatures, so-called \emph{vortices} and \emph{antivortices}
appear, which are faces of the square lattice around which the XY configuration
makes a full turn (with the turning direction indicating to which of the two categories
the face belongs).
It was predicted that at high temperature the vortices and antivortices appear more or less 
independently,
leading to exponential decay of correlations,
while at low temperature they appear in pairs interacting through a \emph{Coulomb interaction},
leading to polynomial decay of correlations.
This phenomenon at low temperature is called \emph{vortex-antivortex binding}.
The phase transition is called a \emph{topological phase transition} because
the vortices and antivortices take the form of topological defects in some representation
of the model.
Yet another way to approach this model is through its \emph{partition function}.
While this quantity is not well-defined in the full plane,
one can make sense of it by defining the XY model in
a box of sides $n$, sending $n$ to infinity, and normalising
by taking the $n^2$-th root at each step.
This yields a quantity $\limpart(\beta)$ which only depends on the inverse temperature 
$\beta\in[0,\infty)$.
Phase transitions have classically been understood by studying the continuity
of $\limpart$ and its derivatives.
The BKT transition is special because $\limpart$ is actually expected to 
be smooth in $\beta$, which means that the phase transition does not have finite order.
Instead, the transition point should present itself as the unique inverse temperature
at which $\limpart$ fails to be analytic.

It is known that there is a critical temperature $\beta_c\in(0,\infty)$
such that the BKT regime is precisely $[\beta_c,\infty)$.
The fact that the two-point function exhibits polynomial decay 
hints at critical behaviour along the entire half-line
$[\beta_c,\infty)$.
The scaling limit of this model is furthermore believed to be conformally invariant
anywhere in the BKT regime. Distinct temperatures
are expected to produce distinct scaling limits, leading to a one-parameter family of continuum models.
Candidates for this family are known in the physics literature, see for example~\cite{ginsparg1988curiosities}.
This article belongs to a larger research programme which has as its objective
to improve our understanding of the macroscopic behaviour of the XY model in two
dimensions in the BKT regime and in a neighbourhood of $\beta_c$.

There is a natural duality transform which turns the 2D XY model into a 2D
model of height functions.
In particular, the two models share the same partition function.
Already in the first mathematical proof of the BKT transition,
namely the landmark article of Fröhlich and Spencer~\cite{frohlich1981kosterlitz},
it is clear that the two models are tightly linked:
the authors simultaneously prove the existence of a delocalised phase 
(in the case of height functions) and of a BKT phase (in the case of the XY model).
Their approach relies on a multi-scale analysis of the Coulomb gas,
see also~\cite{kharash2017fr} for a modern exposition.
A new perspective was developed more recently:
a new delocalisation proof appeared~\cite{lammers2022height},
and two independent
teams---namely Aizenman, Harel, Peled, and Shapiro~\cite{aizenman2021depinning}
and Van Engelenburg and Lis~\cite{van2022elementary}---proved
that delocalisation of the height function in fact implies that the dual spin
model is in the BKT regime.
Purely on the height functions side, 
Bauerschmidt, Park, and Rodriguez proved convergence 
of the \emph{discrete Gaussian model} towards the \emph{Gaussian free field}
using the renormalisation group flow~\cite{bauerschmidt2022discreteA,bauerschmidt2022discreteB}.
The discrete Gaussian model is dual to the \emph{Villain model},
which in turn lies very close to the XY model,
and the Gaussian free field is a conformally invariant process.
This raises an exciting question:
would it perhaps be possible to identify the scaling limit of the XY model 
(or some related spin model) \emph{through its dual height function}?

Theorem~\ref{thm:main_corr} further validates this approach,
as it asserts that the two phase transitions coincide
and that certain quantitative information about the models
can be pushed through the duality transform.
\comment{Add something about significance of correlation length/scaling relations?}
On the other hand, the result is reassuring from a physical, heuristic
perspective: namely that we should be able to identify and understand 
the phase transition by pure analysis of the partition function,
which happens to be the same for the two models.

This is obviously not the only approach to understanding the BKT transition.
Spin correlations in the XY model can be linked to a percolation model satisfying
a positive association property; see for example~\cite{chayes1998discontinuity,dubedat2022random}.
Moreover, it was already mentioned that the XY model has a relation with the Coulomb gas
which provides another route to attack the problem.
In~\cite{garban2020quantitative}, Garban and Sepúlveda
relate the decay of the two-point function in the spin model directly 
to the Coulomb gas.
In~\cite{garban2020statistical}, the same authors prove delocalisation
of the height function after a particular perturbation of the model
by improving the original Fröhlich--Spencer approach.
More precisely, this perturbation destroys most of the symmetry
that is present in the original setup.
This proves that this approach involving the Coulomb gas is an extremely robust
method capable of obtaining more subtle, quantitative information.

\subsection{Definitions of the two models}
\label{subsec:definitions}

\subsubsection*{The XY model}

For $n\in\Z_{\geq 0}$, let $\Lambda_n:=[-n,n]^2\cap\Z^2$ denote the vertex set and 
$\E(\Lambda_n):=\{\{x,y\}\subset\Lambda_n:\|y-x\|_2=1\}$ the corresponding set
of nearest-neighbour edges. The symbol $\S^1\subset\C$ is used for the unit circle.
For any $n\in\Z_{\geq 0}$ and $\beta\in[0,\infty)$, we let $\langle\blank\rangle_{n,\beta}$
denote the expectation functional of the \emph{XY model in $\Lambda_n$ at inverse temperature $\beta$},
defined by
\[
    \langle f\rangle_{n,\beta}:=
    \frac1{Z_{n,\beta}}
    \int
    d\sigma
    f(\sigma)
    \exp
    \sum_{\{x,y\}\in\E(\Lambda_n)}
    \frac\beta2(\sigma_x\bar\sigma_y+\bar\sigma_x\sigma_y),
\]
where $d\sigma$ is the Haar measure on $(\S^1)^{\Lambda_n}$
(a probability measure) and
$Z_{n,\beta}$ the \emph{partition function},
chosen such that $\langle1\rangle_{n,\beta}=1$.

\subsubsection*{Expansion of the partition function}

The XY model has a dual height function which naturally appears when expanding
the partition function. For this construction, let $\vec\E(\Lambda_n)$ denote the
directed counterparts to edges in $\E(\Lambda_n)$. By expanding the exponentials,
we get
\begin{equs}
    Z_{n,\beta}
    &=
    \int
    d\sigma
    \exp
    \left(
    \sum\nolimits_{xy\in\vec\E(\Lambda_n)}
    \frac\beta2\sigma_x\bar\sigma_y
    \right)
    \\&=
    \sum_{\n}
    \int
    d\sigma
    \prod_{xy\in\vec\E(\Lambda_n)}
    \frac{{(\beta\sigma_x\bar\sigma_y/2)^{\n_{xy}}}}
    {\n_{xy}!}
    \\
    \label{eq:standard_expansion}
    &=
    \sum_{
        \n
        :\:
        \partial\n=0
    }
    \prod_{xy\in\vec\E(\Lambda_n)}
    \frac{{(\beta/2)^{\n_{xy}}}}
    {\n_{xy}!}.
\end{equs}
Here and in the sequel,
the sums over $\n$ run over all \emph{currents},
that is, functions $\n:\vec\E(\Lambda_n)\to\Z_{\geq 0}$.
For any current $\n$,
we let $\partial\n$ denote its \emph{source function}
\[
    \partial\n:\Lambda_n\to\Z
    ,\,
    x\mapsto\sum_{y\sim x}\n_{xy}-\n_{yx}.
\]
Equation~\eqref{eq:standard_expansion} is obtained by evaluating the integral
for each term $\n$, observing that only the sourceless terms yield a
nonzero result.
Let $\M_{n,\beta/2}$ denote the measure on currents
defined by
\begin{equ}
    \label{eq:HF_def}
    \M_{n,\beta/2}[\n]:=\prod_{xy\in\vec\E(\Lambda_n)}
    \frac{{(\beta/2)^{\n_{xy}}}}
    {\n_{xy}!}.
\end{equ}
In other words, $\M_{n,\beta/2}$ is the non-normalised version of the
probability measure sampling independent $\Poisson{\beta/2}$
random variables on the directed edges.
In this formalism we get
\[
    Z_{n,\beta}=\M_{n,\beta/2}[\partial\n=0].
\]

\subsubsection*{The dual height function}

This is also the partition function of the probability measure
$\M_{n,\beta/2}[\blank|\partial\n=0]$
defined by
\[
    \M_{n,\beta/2}[f|\partial\n=0]:=\frac{1}{Z_{n,\beta}}\int_{\{\partial\n=0\}}f(\n)d\M_{n,\beta/2}(\n).
\]
Let $\F(\Lambda_n)$ denote the faces of the planar graph $(\Lambda_n,\E(\Lambda_n))$.
Write $\F_\infty$ for the outer face,
and index the remaining faces by the vertex in their lower-left corner.
For example, $\F_{(0,0)}$ is the face centred at $(1/2,1/2)$.
Whenever $\n$ is sourceless, we let $h:\F(\Lambda_n)\to\Z$
denote the \emph{height function} which assigns to each
face $\F$ the total winding number of $\n$ around $\F$.
This winding number makes sense precisely because $\n$ is sourceless.
For the avoidance of doubt, the height function is defined such that:
\begin{itemize}
    \item We always have $h(\F_\infty)=0$,
    \item If $xy$ is a directed edge with the face $\F_a$ on its right and
    $\F_b$ on its left, then
    \[
        h(\F_a)-h(\F_b)=\n_{xy}-\n_{yx}.   
    \]
\end{itemize}
For any $n$ we extend the height function to the faces $\F(\Z^2)$ of the full-plane square
lattice by setting $h(\F_a):=0$ for any $a\in\Z^2\setminus [-n,n)^2$.

\subsection{The phase transitions of the two models}
\label{subsec:phtwo}
The story of the XY model and the dual height functions model are very much
intertwined. This subsection describes the phase transition of
the two models separately.

\subsubsection*{The XY model}

Consider the XY measure $\langle\blank\rangle_{n,\beta}$
for fixed $\beta\in[0,\infty)$.
The Ginibre inequality~\cite{ginibre1970general} implies that
this measure converges to a unique shift-invariant limit in the local convergence
topology as $n\to\infty$
which we denote $\langle\blank\rangle_{\Z^2,\beta}$.
Of central importance is the \emph{two-point function}
\[
    \Z^2\times\Z^2\to[0,1],\,(x,y)\mapsto \langle\sigma_x\bar\sigma_y\rangle_{\Z^2,\beta}.    
\]
\comment{Perhaps there is another reference?}
Moreover, it is known~\cite[Lemma~19]{van2022elementary} that for any $n\in\Z_{\geq 0}$ and $x,y,z\in\Lambda_n$,
we have
\[
\langle\sigma_x\bar\sigma_z\rangle_{n,\beta}
\geq
\langle\sigma_x\bar\sigma_y\rangle_{n,\beta}
\langle\sigma_y\bar\sigma_z\rangle_{n,\beta}.
\]
This inequality carries over to the $n\to\infty$ limit,
and allows us to define the \emph{mass} $\massXY{\beta}\in[0,\infty]$
of the model:
\[
        \massXY{\beta}:=
        \lim_{k\to\infty}-\frac1k\log\langle\sigma_{(0,0)}\bar\sigma_{(k,0)}\rangle_{\Z^2,\beta}.
\]
The convergence follows from the previous inequality
and the subadditivity lemma.
The mass is non-increasing in $\beta$ due to the Ginibre inequality.
The \emph{correlation length} is defined as $\corrXY{\beta}:=1/\massXY{\beta}\in[0,\infty]$.

\begin{theorem*}[BKT transition~\cite{frohlich1982triviality,aizenman2021depinning,van2022elementary}]
    There is some $\beta_c\in(0,\infty)$ such that:
    \begin{itemize}
        \item \textbf{Exponential decay:}
        For all $\beta\in[0,\beta_c)$
        we have $\massXY{\beta}>0$,
        \item
        \textbf{Polynomial decay:}
        For all $\beta\in[\beta_c,\infty)$ we have $\massXY{\beta}=0$,
        and
        \[
            \forall n\in\Z_{\geq 1},\quad
            \langle\sigma_{(0,0)}\bar\sigma_{(n,0)}\rangle_{\Z^2,\beta}
            \geq \frac1{4(2n-1)}.
        \]
    \end{itemize}
\end{theorem*}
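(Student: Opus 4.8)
My plan is to combine the monotonicity of the mass recorded above with one elementary high-temperature estimate and two deep low-temperature inputs concerning the dual height function.

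\emph{Reduction.} By the Ginibre inequality $\langle\sigma_x\bar\sigma_y\rangle_{\Z^2,\beta}$ is non-decreasing in $\beta$, so $\massXY{\beta}$ is non-increasing; moreover, by supermultiplicativity of the two-point function along a coordinate axis together with the subadditivity lemma, $\massXY{\beta}=\inf_{k\ge1}\bigl(-\tfrac1k\log\langle\sigma_{(0,0)}\bar\sigma_{(k,0)}\rangle_{\Z^2,\beta}\bigr)$. Hence $\{\beta:\massXY{\beta}=0\}$ is an up-set, and I set $\beta_c:=\inf\{\beta:\massXY{\beta}=0\}$. It then suffices to prove: (a) $\massXY{\beta}>0$ for all sufficiently small $\beta>0$; (b) whenever $\massXY{\beta}=0$ one in fact has $\langle\sigma_{(0,0)}\bar\sigma_{(n,0)}\rangle_{\Z^2,\beta}\ge\tfrac1{4(2n-1)}$ for every $n\ge1$; and (c) $\massXY{\beta}=0$ for some $\beta<\infty$. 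Indeed (a) forces $\beta_c>0$ and (c) forces $\beta_c<\infty$; for $\beta>\beta_c$ monotonicity of the mass gives $\massXY{\beta}=0$, and then (b) gives the quantitative bound; the bound persists at $\beta=\beta_c$ because $\beta\mapsto\langle\sigma_{(0,0)}\bar\sigma_{(n,0)}\rangle_{\Z^2,\beta}$ is continuous (any weak subsequential limit of the shift-invariant infinite-volume states along $\beta\downarrow\beta_c$ is again a shift-invariant Gibbs state at $\beta_c$, hence the unique such by Ginibre), and this in turn forces $\massXY{\beta_c}=0$; thus the up-set equals $[\beta_c,\infty)$.

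\emph{Item (a): high temperature.} Expanding the partition function as in~\eqref{eq:standard_expansion} but keeping the insertion $\sigma_x\bar\sigma_y$, the Haar integral is nonzero precisely for currents $\n$ with source $+1$ at $y$ and $-1$ at $x$, so
\[
    \langle\sigma_x\bar\sigma_y\rangle_{n,\beta}
    =\frac{\M_{n,\beta/2}[\partial\n=\delta_y-\delta_x]}{\M_{n,\beta/2}[\partial\n=0]}.
\]
Given such a current $\n$, the directed edges carrying positive net current (hence with $\n_e\ge1$) support a self-avoiding path $\gamma$ from $x$ to $y$; fix one by a deterministic rule and let $\n'$ be obtained by removing one unit of current along each edge of $\gamma$, so that $\partial\n'=0$. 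By~\eqref{eq:HF_def} each such removal on a directed edge $e$ multiplies the weight by $\n_e/(\beta/2)\ge(\beta/2)^{-1}$, so $\M_{n,\beta/2}[\n]\le(\beta/2)^{|\gamma|}\M_{n,\beta/2}[\n']$. Summing over $\n$, regrouping by the chosen path, and using that $\n\mapsto\n'$ is injective for fixed $\gamma$, the numerator is at most $\bigl(\sum_{\gamma}(\beta/2)^{|\gamma|}\bigr)\M_{n,\beta/2}[\partial\n=0]$, where $\gamma$ ranges over self-avoiding paths from $x$ to $y$. As there are at most $4\cdot3^{\ell-1}$ such paths of each length $\ell$, and every one has length $\ell\ge\|y-x\|_1$, the geometric series converges for $\beta<\tfrac23$, yielding
\[
    \langle\sigma_x\bar\sigma_y\rangle_{n,\beta}\le\frac{4}{3\,(1-3\beta/2)}\,(3\beta/2)^{\|y-x\|_1}
\]
uniformly in $n$; letting $n\to\infty$ gives $\massXY{\beta}\ge\log\tfrac{2}{3\beta}>0$ for $\beta<\tfrac23$, which is (a).

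\emph{Low temperature, the crux: delocalisation of the dual height function.} Items (b) and (c) pass through the dual model, and the hard point---imported wholesale---is that for $\beta$ large enough the height function under $\M_{n,\beta/2}[\blank\mid\partial\n=0]$ is \emph{delocalised}, i.e.\ $\Var\bigl(h(\F_{(0,0)})\bigr)\to\infty$ as $n\to\infty$. The classical proof is the multi-scale renormalisation-group analysis of Fröhlich and Spencer~\cite{frohlich1982triviality}: one rewrites the height-function partition function as a two-dimensional lattice Coulomb gas and shows, scale by scale, that the effective activity of the charges (equivalently, of vortex--antivortex pairs on the spin side) flows to zero, so that on large scales the field is governed by a massless Gaussian effective theory with logarithmically growing variance; a more recent, conceptually different delocalisation argument is available in~\cite{lammers2022height}. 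One also uses that height-function delocalisation is monotone in $\beta$, so that the delocalised regime is an up-set. This is by far the main obstacle; the rest of the argument is comparatively soft.

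\emph{Item (b): transfer to the spins.} Finally one transfers delocalisation of $h$ to polynomial decay in the XY model, following Aizenman--Harel--Peled--Shapiro~\cite{aizenman2021depinning} and Van Engelenburg--Lis~\cite{van2022elementary}. Through the current/loop representation, $\langle\sigma_{(0,0)}\bar\sigma_{(n,0)}\rangle$ is rewritten in terms of the dual height function---morally as the probability that a face incident to $(0,0)$ and a face incident to $(n,0)$ lie on opposite sides of a single level line of $h$---a quantity made tractable by the FKG property of the height function and a monotone coupling. In the delocalised regime these level lines are macroscopic, and a pigeonhole and symmetrisation argument over the $2n-1$ faces and connecting edges along a horizontal lattice path joining the two marked faces (using the sign symmetry $h\leftrightarrow-h$) bounds this probability below by $\tfrac1{4(2n-1)}$, which is the asserted estimate. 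In particular $\massXY{\beta}=0$ whenever $h$ is delocalised, which gives (c); and, using in addition the comparatively easy fact that localisation of the height function forces exponential decay of the spins, $\massXY{\beta}=0$ forces delocalisation and hence the bound, giving (b). Together with the reduction this proves the theorem.
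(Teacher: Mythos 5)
The statement in question is quoted by the paper as a known background theorem, with citations to Fr\"ohlich--Spencer, Aizenman--Harel--Peled--Shapiro, and Van Engelenburg--Lis; the paper supplies no proof of its own, only a one-sentence attribution saying the dichotomy ``is a consequence of the Ginibre inequality and the Lieb--Rivasseau inequality, which Van Engelenburg and Lis used to implement part of the strategy of Duminil-Copin and Tassion for sharpness.'' Your reconstruction should therefore be judged against that literature, and I see one genuine gap in it.

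Your reduction, item~(a), and item~(c) are all fine. The high-temperature bound is a clean and correct path-extraction argument in the current representation; and $\beta_c<\infty$ does follow from height-function delocalisation at large $\beta$ together with the AHPS/Van Engelenburg--Lis transfer argument (which is where the explicit $\tfrac1{4(2n-1)}$ comes from). The gap is in item~(b): you reduce ``$\massXY{\beta}=0\Rightarrow$ polynomial lower bound'' to ``localisation of the height function forces exponential decay of the spins,'' which you call a ``comparatively easy fact.'' That implication is \emph{not} easy, and it was \emph{not} available at the time these references were written: it is precisely the new direction established by the present paper, namely the inequality $\massHeight{\beta}\le 2\massXY{\beta}$ proved in Sections~\ref{sec:strip}--\ref{sec:heightcorrlength}. (The ``easy'' direction is the converse one you already used for (c).) Invoking it here is therefore circular relative to the paper and anachronistic relative to the cited theorem.

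The mechanism that actually delivers item~(b) in the literature is different and you have omitted it entirely: Van Engelenburg--Lis run a Duminil-Copin--Tassion-type sharpness argument for the XY model directly, the key analytic input being the \emph{Lieb--Rivasseau inequality}~\cite{rivasseau1980lieb}. This produces, purely on the spin side, a $\beta_c$ with exponential decay below and the stated $\tfrac1{4(2n-1)}$ lower bound at and above (the bound at $\beta_c$ coming out of the sharpness argument itself, so you do not really need the separate right-continuity/weak-limit argument either). The Fr\"ohlich--Spencer delocalisation and the HF-to-spin transfer are then used only to show $\beta_c<\infty$. To repair your write-up, replace the circular step in (b) with the Lieb--Rivasseau/DCT sharpness input, and move the height-function material to its correct role in establishing (c).
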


The BKT phase (polynomial decay phase) was first proved to exist in the work of 
Fröhlich and Spencer~\cite{frohlich1982triviality}.
The dichotomy as it is written above is a consequence of the Ginibre
inequality and the Lieb--Rivasseau inequality~\cite{rivasseau1980lieb},
which Van Engelenburg and Lis~\cite{van2022elementary} used to implement
part of the strategy of Duminil-Copin and Tassion for sharpness of the phase transition
in percolation and the Ising model~\cite{duminil2016new},
adapted to the XY model.

\subsubsection*{The dual height function}

Consider the law of $h$ in $\M_{n,\beta/2}[\blank|\partial\n=0]$ for fixed $\beta\in[0,\infty)$.
It was shown in~\cite{van2022elementary} that
this height function falls into a larger class of height functions studied
extensively in~\cite{SHEFFIELD,lammers2021delocalisation,arXiv.2211.14365}.
We use here some simple consequences of a standard way to view this height function:
by conditioning on $|h|$, the signs of $h$ behave like a ferromagnetic Ising model
with coupling constants which are explicit in terms of $|h|$.
We call this the \emph{Ising coupling} of the height function with the Ising model.
The Ising model has an associated \emph{FK--Ising percolation}.
A useful result asserts that the joint law of $|h|$ with the FK--Ising edges is increasing in $n$;
see~\cite{lammers2021delocalisation} for details and~\cite{arXiv.2211.14365} for further context.

By symmetry, the law of $h$ is identical to the law of $-h$.
Thus, the height expectation of each face is zero, and
 we may therefore define its covariance matrix by setting
\[
    \Cov_{n,\beta}:\Z^2\times\Z^2\to[0,\infty)
    ,\,
    (a,b)\mapsto
    \Cov_{n,\beta}[a;b]:=\M_{n,\beta/2}[h(\F_a)h(\F_b)|\partial\n=0].
\]
The Ising coupling implies that
the covariance matrix is pointwise non-decreasing in $n$,
and tends to some shift-invariant limit $\Cov_{\infty,\beta}:\Z^2\times\Z^2\to[0,\infty]$
for any fixed $\beta$ as $n\to\infty$ (see~\cite{lammers2021delocalisation}
or~\cite[Lemma~9.1]{arXiv.2211.14365}).
Define the \emph{mass} $\massHeight{\beta}\in[0,\infty]$ of the height function by
\begin{equ}
    \label{eq:DefMassHeight}
    \massHeight{\beta}:=
    \lim_{k\to\infty}-\frac1k\log(\Cov_{\infty,\beta}[(0,0);(0,k)]\wedge 1).
\end{equ}
Refer to~\cite[Lemma~10.2]{arXiv.2211.14365} for convergence of this limit.
As for the XY model, the correlation length is defined as the reciprocal of the mass:
\(
    \corrHeight{\beta}:=1/\massHeight{\beta}    
\).
The following theorem summarises the existing results on the phase transition of this height function.

\begin{theorem*}[Dichotomy for height functions~\cite{arXiv.2211.14365}]
    There exists a universal constant $c\in(0,\infty)$
    such that for each fixed $\beta\in[0,\infty)$, one of the following two holds true.
    \begin{itemize}
        \item \textbf{Localisation:} The mass $\massHeight{\beta}$ is strictly positive
        and $\Cov_{\infty,\beta}<\infty$.
        \item \textbf{Delocalisation:}
        The mass $\massHeight{\beta}$ is zero, $\Cov_{\infty,\beta}\equiv\infty$,
        and
        \[
            \exists m\in\Z_{\geq 0},
            \quad
            \forall n\in\Z_{\geq m},
            \quad
            \Cov_{n,\beta}[(0,0);(0,0)]\geq c\log n.
        \]
    \end{itemize}
\end{theorem*}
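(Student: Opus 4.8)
The plan is to reduce the statement to a sharp dichotomy for the \emph{level loops} of $h$, using the Ising coupling recalled above. Fix $\beta$ and write $\mu_n$ for the law of $h$ under $\M_{n,\beta/2}[\blank\mid\partial\n=0]$; after the verification carried out in~\cite{van2022elementary}, the family $\{\mu_n\}$ belongs to the class of monotone integer-valued height functions with a reflection symmetry treated in~\cite{arXiv.2211.14365}, and the theorem is precisely the dichotomy established there. The first step is to record the identity tying the covariance to a percolation connectivity: conditionally on $|h|$ the sign field $\epsilon=\Sign{h}$ is a ferromagnetic Ising model, and passing to the Edwards--Sokal coupling of the triple $(|h|,\epsilon,\omega)$---with $\omega$ the associated FK--Ising edge configuration on the faces---one has $h(\F_a)=|h(\F_a)|\,\epsilon_a$ and $\E[\epsilon_a\epsilon_b\mid|h|,\omega]=\mathbf 1\{\F_a\leftrightarrow_\omega\F_b\}$, hence
\[
    \Cov_{n,\beta}[a;b]=\E_n\!\left[\,|h(\F_a)|\,|h(\F_b)|\,\mathbf 1\{\F_a\leftrightarrow_\omega\F_b\}\,\right],
\]
where $\E_n$ is the expectation of the coupled measure in $\Lambda_n$. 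Since moreover $|h(\F_a)|$ is bounded by the number of level loops of $h$ (equivalently, sign-change circuits of $\epsilon$) surrounding $\F_a$, the whole problem is governed by whether a fixed face is typically enclosed by only a few nested level loops or by many.

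The core step is a sharp dichotomy for crossing probabilities, which I would obtain by a sharpness argument in the style of Duminil-Copin and Tassion (cf.~\cite{duminil2016new}) carried out directly for the level-loop ensemble. The inputs are the monotonicity of the coupling in $n$ (the increasing coupling of $|h|$ with the FK--Ising edges recalled above), the FKG inequality for that coupling, and the self-duality of the level-loop ensemble coming from the symmetry $h\leftrightarrow-h$. A differential-inequality analysis, via the OSSS inequality as in the modern proofs of sharpness, then shows that the probability that some level loop crosses the rectangle $[0,2N]\times[0,N]$ in the short direction lies in exactly one of two regimes: it decays exponentially in $N$ (the \emph{localised} alternative), or it stays bounded below uniformly in $N$ (the \emph{delocalised} alternative). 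In the localised alternative every level loop and every FK--Ising cluster has a diameter with a uniform exponential tail; hence $|h(\F_a)|$ has exponential moments uniform in $n$, while on the event $\{\F_a\leftrightarrow_\omega\F_b\}$ the $\omega$-cluster of $\F_a$ has diameter at least $\|a-b\|$, an event of probability at most $e^{-c\|a-b\|}$. H\"older's inequality in the identity above then gives $\Cov_{n,\beta}[a;b]\le Ce^{-c\|a-b\|}$ uniformly in $n$, whence $\Cov_{\infty,\beta}<\infty$ and $\massHeight{\beta}>0$. In the delocalised alternative, a box-in-box renormalisation across dyadic scales produces of order $\log n$ scales at which a level loop surrounds $\F_{(0,0)}$ with probability bounded below; feeding this into a martingale decomposition of $h(\F_{(0,0)})$ along these scales---where each such scale contributes a bounded-below amount to the variance---gives $\Cov_{n,\beta}[(0,0);(0,0)]\ge c\log n$. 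By monotonicity in $n$ this forces $\Cov_{\infty,\beta}[(0,0);(0,0)]=\infty$, hence $\massHeight{\beta}=0$; and since $h(\F_a)-h(\F_b)$ has variance bounded uniformly in $n$ for fixed $a,b$ while the pointwise variance diverges, $\Cov_{\infty,\beta}\equiv\infty$. The constant $c$ is read off from the RSW input, which uses only monotonicity and the symmetry, and is therefore independent of $\beta$.

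The main obstacle is the sharpness step. Off-the-shelf sharpness theorems are formulated for Bernoulli or homogeneous FK percolation, whereas here the FK--Ising coupling constants are themselves random functions of $|h|$ and cannot be inserted into those theorems as black boxes; one must instead develop the monotonicity and the differential inequalities intrinsically for the level-loop representation of $h$, which is the technical heart of~\cite{arXiv.2211.14365}. Two further delicate points are upgrading the decay in the localised alternative from polynomial to genuinely exponential---needed both for $\Cov_{\infty,\beta}<\infty$ and for $\massHeight{\beta}>0$---and verifying that the two alternatives are exhaustive and mutually exclusive, so that the dichotomy is clean; both rely on the planarity of the model and the monotonicity of the family $\{\mu_n\}$.
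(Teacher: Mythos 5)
The statement you were asked to prove is not proved in this paper: it is imported verbatim from~\cite{arXiv.2211.14365} (after the reduction, carried out in~\cite{van2022elementary}, showing that the dual height function of the XY model belongs to the class of monotone, symmetric height functions treated there). The paper uses it as a black box, so there is no internal proof to compare your attempt against; the fair benchmark is the argument of~\cite{arXiv.2211.14365} itself.

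Against that benchmark, your sketch correctly identifies the load-bearing ingredients: the Ising/Edwards--Sokal coupling of $\Sign h$ with an FK--Ising edge configuration $\omega$ conditionally on $|h|$, the resulting identity $\Cov_{n,\beta}[a;b]=\tilde\E_{n,\beta/2}\bigl[\,|h(\F_a)|\,|h(\F_b)|\,\mathbf 1\{\F_a\leftrightarrow_\omega\F_b\}\,\bigr]$ (which the present paper also records and uses in Section~\ref{sec:heights}), the monotonicity in $n$ of the joint law of $(|h|,\omega)$, and a sharpness dichotomy in the style of Duminil-Copin--Tassion/OSSS applied to the level-loop (or FK) crossing events, upgraded via a dyadic RSW renormalisation to the $c\log n$ variance lower bound in the delocalised alternative. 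You also correctly flag the genuine obstacle: the FK coupling constants are random functionals of $|h|$, so neither OSSS nor RSW can be invoked off the shelf and one must rederive the differential inequalities intrinsically for the level-loop ensemble, which is the technical core of~\cite{arXiv.2211.14365}. Two small caveats: calling the level-loop ensemble ``self-dual'' is loose---what is actually used is the $h\leftrightarrow-h$ symmetry together with the lattice symmetries; and the step ``$\Var(h(\F_a)-h(\F_b))$ is bounded uniformly in $n$'' in the delocalised case, while true, is itself a nontrivial consequence of the monotone-coupling framework and should not be stated as obvious. With those qualifications, your proposal is a faithful high-level reconstruction of the cited proof rather than an alternative route, and it would not constitute a self-contained proof without carrying out the technical programme you correctly identify as the heart of the matter.
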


The fact that localisation (finiteness of the covariance matrix)
of the height function implies exponential decay
of the covariance matrix should be interpreted as a sharpness result.
A remarkable detail lies in the fact that prior to the current article,
we could not rule out the existence of multiple transition points as we vary 
$\beta$ from $0$ to $\infty$.
This is due to the lack of an equivalent of the Ginibre inequality
for height functions, which would ideally yield pointwise monotonicity of the covariance
matrix in $\beta$.
A by-product of Theorem~\ref{thm:main_corr} lies in the fact that the mass $\massHeight{\beta}=2\massXY{\beta}$
is non-increasing in $\beta$, which also implies uniqueness of the transition point.

\subsection{Innovations and proof overview}
\label{subsec:proof_overview}

In the eyes of the author, the analysis exhibited here differs from 
the existing literature
in two different ways.
\begin{enumerate}
    \item
    We interpret the expansion of the XY model as a Poisson point process on $\vec E\times[0,\infty)$,
    where $\vec E$ denotes the set of directed edges of the simple graph $(V,E)$ under consideration.
    We interpret the second coordinate as \emph{time} which allows us to run a natural family
    of continuous-time exploration processes. After a gauge transform (Lemma~\ref{lemma:gauge}),
    these explorations generate formulas which are reminiscent of the 
    the random loops and walks appearing in the analysis
    of the $\phi^4$ model in the work of Brydges, Fröhlich, and Spencer~\cite{brydges1982random}.
    These \emph{BFS loops} allowed Fröhlich to prove triviality of the
    XY model in dimension five and higher~\cite{frohlich1982triviality},
    by seeing the model as a limit of two-component $\phi^4$ models.
    However, our approach differs in two ways:
    first, we realise the expansion directly for the XY model rather than
    for the two-component $\phi^4$ model; second,
    the expansions arise naturally from exploration processes rather than
    from heavy calculations.
    The latter property contributes to the flexibility of the new framework.
    On the other hand, the new exploration process 
    is reminiscent of the \emph{backbone structure} which is of great
    use in the analysis of the Ising model: it was (to the best knowledge of the author)
    first used by Aizenman and Fernández~\cite{aizenman1986critical} to prove continuity
    of the phase transition in dimension at least four,
    and was recently applied by Aizenman and Duminil-Copin~\cite{aizenman2021marginal}
    to prove triviality of the scaling limit at criticality in the same context.

    \item We prove an identity relating the covariance matrix of the height function
    to the appearance of large \emph{cycles},
    which are self-avoiding loops.
    The Poisson points appearing in the Poisson point process have a natural combinatorial
    decomposition into cycles, which differs from the decomposition into BFS loops
    (which are not self-avoiding).
    The covariance of two faces equals the expectation
    of the number of cycles surrounding both faces
    (see Lemma~\ref{lemma:cov}).
    Using the framework outlined above and a consequence of the Ginibre inequality
    first observed by Fröhlich~\cite{frohlich1982triviality},
    we prove the following (informally stated) identity:
    for large $n$, we have
    \[
        \log\M_{n,\beta/2}[
            \text{some cycle surrounds both $\F_x$ and $\F_y$}
            |\partial\n=0
        ]
        \approx
        2\log\langle
            \sigma_x\bar\sigma_y
        \rangle_{n,\beta}.
    \]
    The factor two is easily explained:
    a cycle of minimal length surrounding $\F_x$ and $\F_y$ is roughly twice as long 
    as a path of minimal length connecting $x$ and $y$,
    and the log-likelihood of seeing a path or cycle is roughly linear in 
    its length in the exponential decay regime.
\end{enumerate}

The proof is split up as follows.
Section~\ref{sec:Ginibre} contains some useful
consequences of the Ginibre inequality.  Section~\ref{sec:Poisson} describes the
continuous-time
Poisson point process stemming from the expansion of the XY model.
Section~\ref{sec:twopoint} describes how the two-point function in the XY model
is related to the BFS random walk, but the ideas developed in this section also
serve as a template for other exploration algorithms that may be defined on top
of the Poisson point process.
In fact, this section contains \emph{three} identities for the two-point function.
Section~\ref{sec:comb} describes the cycle decomposition
and several of its applications,
including the identity for the covariance matrix mentioned above.
This section is combinatorial in flavour and somewhat orthogonal to the remainder
of the article.

We view the mass identity
\[
    \massHeight{\beta}=2\massXY{\beta}
\]
as the combination of two inequalities which are proved separately.
Sections~\ref{sec:strip}--\ref{sec:heightcorrlength}
prove that $\massHeight{\beta}\leq 2\massXY{\beta}$,
while Section~\ref{sec:lowerboundXYcorr}
proves that $2\massXY{\beta}\leq\massHeight{\beta}$.

Section~\ref{sec:strip} demonstrates how the path appearing in the expansion
of the two-point function may be turned into a path segment which crosses a strip,
thus lower bounding the probability of such an event.
Section~\ref{sec:loop} describes how several such strip crossings can be combined to form
a large loop.
The argument which glues non-overlapping strips together in such a way
that the crossings match up at their endpoints is reminiscent of~\cite{duminil2020macroscopic},
where the authors prove that the loops in the loop~$\operatorname{O}(n)$ are not exponentially
small at the Nienhuis critical point for $n\in[1,2]$.
The loop in our article is defined in such a way that its decomposition into cycles also contains a large cycle.
Section~\ref{sec:heightcorrlength} explains how to derive the first inequality from
the loop estimate.
Section~\ref{sec:lowerboundXYcorr} proves the second inequality and is independent of Sections~\ref{sec:strip}--\ref{sec:heightcorrlength}.

\subsection{Extension to the Villain model}

The Villain model is the same model as the XY model,
except that the potential functions in the Hamiltonian change.
The model may in fact be viewed as a limit of XY models
or (equivalently) interpreted as an XY model on a metric graph~\cite{aizenman2021depinning,dubedat2022random}.
The height function dual to this model is the discrete Gaussian model,
which is more ameanable to analysis than the height function of the XY model~\cite{aizenman2021depinning,bauerschmidt2022discreteA,bauerschmidt2022discreteB}.
The qualitative result in Theorem~\ref{thm:main_corr} extends to the Villain model.

\begin{maintheorem}
    \label{thm2}
    At any fixed inverse temperature, the Villain model
    exhibits polynomial decay of the two-point function (that is, it is in the
    BKT regime) if and only if the corresponding discrete Gaussian model is delocalised.
\end{maintheorem}

The proof of this corollary of Theorem~\ref{thm:main_corr} is contained in
Section~\ref{sec:corproof}.

\section{The Ginibre inequality}
\label{sec:Ginibre}

The purpose of this section is to cast the Ginibre inequality in a useful form.
For the sake of generality, let $G=(V,E)$ denote
an arbitrary finite simple graph, and let $\vec E$ denote the set of directed edges
corresponding to edges in $E$.
A \emph{family of coupling constants} is a function $J:E\to[0,\infty)$,
and the corresponding XY model is given by
\[
    \langle f\rangle_{G,\{J\}}
    :=
    \frac{1}{Z_{G,\{J\}}}
    \int
    d\sigma
    f(\sigma)
    \exp
    \sum_{\{x,y\}\in E}
    \frac{J_{xy}}2(\sigma_x\bar\sigma_y+\bar\sigma_x\sigma_y).
\]
Here $d\sigma$ denotes the Haar measure on $(\S^1)^V$;
the notation with the curly brackets is used to distinguish it
from the slightly different definition of the XY model
used later.
Important for us is the following (almost immediate) corollary of the Ginibre
inequality.

\begin{lemma}
    \label{lemma:traditional_ginibre}
    For any $J,A,B,C:E\to[0,\infty)$, we have
    \[
        Z_{G,\{J\}}\cdot Z_{G,\{J+A+B+C\}}
        \geq
        Z_{G,\{J+A\}}\cdot Z_{G,\{J+B\}}.
    \]
\end{lemma}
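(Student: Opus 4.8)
The plan is to reduce the statement to two classical consequences of the Ginibre inequality~\cite{ginibre1970general} for the classical XY model: that the free energy
\[
    f(K):=\log Z_{G,\{K\}},\qquad K\colon E\to[0,\infty),
\]
is \emph{non-decreasing} and \emph{supermodular} as a function of the coupling constants. Writing $\sigma_x=e^{i\theta_x}$ and $S_e(\sigma):=\cos(\theta_x-\theta_y)=\tfrac12(\sigma_x\bar\sigma_y+\bar\sigma_x\sigma_y)$ for an edge $e=\{x,y\}$, we have $Z_{G,\{K\}}=\int d\sigma\,\exp(\sum_{e\in E}K_eS_e(\sigma))$, which is smooth and strictly positive in $K\in\R^E$ because $(\S^1)^V$ is compact and the integrand is positive and analytic. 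Differentiating under the integral sign,
\[
    \frac{\partial f}{\partial K_e}(K)=\langle S_e\rangle_{G,\{K\}},\qquad
    \frac{\partial^2 f}{\partial K_e\partial K_{e'}}(K)=\Cov_{G,\{K\}}(S_e,S_{e'}).
\]
The first Ginibre inequality gives $\langle S_e\rangle_{G,\{K\}}\geq 0$, so $f$ is non-decreasing in each coordinate, and the second Ginibre inequality gives $\Cov_{G,\{K\}}(S_e,S_{e'})\geq 0$ for all $e,e'\in E$ (the case $e=e'$ being trivial).

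With these two facts in hand I would argue as follows. By monotonicity, $f(J+A+B+C)\geq f(J+A+B)$, so it remains to prove $f(J)+f(J+A+B)\geq f(J+A)+f(J+B)$. Expressing the left-minus-right side as an iterated integral of a mixed second derivative,
\[
    f(J+A+B)-f(J+A)-f(J+B)+f(J)
    =\int_0^1\!\!\int_0^1 \sum_{e,e'\in E}A_eB_{e'}\,\frac{\partial^2 f}{\partial K_e\partial K_{e'}}(J+sA+tB)\,ds\,dt,
\]
and since $A_e,B_{e'}\geq 0$ and each second partial is a nonnegative covariance, the integrand is pointwise nonnegative, so the left-hand side is $\geq 0$. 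Adding this to the previous inequality and exponentiating gives $Z_{G,\{J\}}\,Z_{G,\{J+A+B+C\}}\geq Z_{G,\{J+A\}}\,Z_{G,\{J+B\}}$.

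The only point that needs a little care is the invocation of the right form of the Ginibre inequality: one should check that the observables $S_e=\cos(\theta_x-\theta_y)$ lie in the cone of functions for which Ginibre's first and second correlation inequalities hold for the classical rotor model, and that these inequalities are valid for \emph{every} ferromagnetic coupling field $K\colon E\to[0,\infty)$, so that they may be applied at the intermediate couplings $J+sA+tB$. Both are standard, and once they are granted the argument is a one-line calculus computation; I therefore expect no genuine obstacle — the lemma is essentially a repackaging of the Ginibre inequality in a form convenient for later sections. As a sanity check, setting $C\equiv 0$ recovers supermodularity of $\log Z$ and setting $A\equiv B\equiv 0$ recovers its monotonicity, so the three fields $A,B,C$ play exactly the two roles one expects.
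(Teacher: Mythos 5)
Your proof is correct, but it takes a genuinely different route from the paper's. The paper proves the lemma by dividing through by $Z_{G,\{J\}}^2$, writing each ratio $Z_{G,\{J+X\}}/Z_{G,\{J\}}$ as the XY expectation $\langle \exp\sum_e X_e S_e\rangle_{G,\{J\}}$, expanding the exponentials into power series, and then applying the Ginibre inequality \emph{term-by-term} to compare the coefficients on the two sides---a single algebraic step. You instead split the claim into two separate qualitative properties of the free energy $f=\log Z$: monotonicity in each coordinate (from $\partial f/\partial K_e=\langle S_e\rangle_K\ge 0$) and supermodularity (from $\partial^2 f/\partial K_e\partial K_{e'}=\Cov_K(S_e,S_{e'})\ge 0$), and reassemble them via the iterated-integral identity for a mixed second difference. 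Both arguments bottom out in the same two correlation inequalities of Ginibre for the rotor model, so the substance is the same, but the organization differs: the paper's expansion proves the multiplicative inequality in one pass, while your calculus version isolates and names the two structural facts (monotonicity and log-supermodularity of the partition function) that the statement encodes, which is arguably more transparent and slightly more modular. The differentiation under the integral sign is unproblematic on the compact space $(\S^1)^V$, and the application of Ginibre at the interpolated couplings $J+sA+tB$ is valid because the inequalities hold for every ferromagnetic coupling field. As a minor stylistic note, your first step already proves $f(J+A+B+C)\ge f(J+A+B)$ and the supermodularity step proves $f(J)+f(J+A+B)\ge f(J+A)+f(J+B)$; you could equally well have interpolated $C$ inside the double integral as a third field, but the two-step split is cleaner and matches the sanity checks you point out.
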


\begin{proof}
    The proof is standard, and we shall briefly describe it.
    For any $X:E\to[0,\infty)$, write
    $Z_{G,\{J+X\}}$ as
    \[
        Z_{G,\{J+X\}}=Z_{G,\{J\}}\left\langle
            \textstyle
            \exp
            \sum_{\{x,y\}\in E}
            \frac{X_{xy}}2(\sigma_x\bar\sigma_y+\bar\sigma_x\sigma_y)
        \right\rangle_{G,\{J\}}.
    \]
    The desired inequality now follows by writing each partition function in
    this form, expanding the exponentials,
    and applying the Ginibre inequality term-by-term.
\end{proof}

The remainder of this article uses another way to describe the coupling strengths.
Our intuition for this comes from the $\phi^4$ model.
In that model, each spin is described by not just an \emph{angle}
$\sigma$, but also by some real-valued \emph{radius} $r$,
so that the true spin is given by the product $\phi=r\cdot\sigma$.
Thus, our coupling strengths are encoded in terms of a \emph{radius function}
$r:V\to[0,\infty)$.
For reasons which shall become clear shortly, we always index our objects
by the \emph{square} of this radius function.
Thus, the XY model for a given radius function $r$ is defined by
\begin{equs}
    \langle f\rangle_{G,r^2}
    &
    :=
    \frac{1}{Z_{G,r^2}}
    \int
    d\sigma
    f(\sigma)
    \exp
    \sum_{\{x,y\}\in E}
    \frac12(\phi_x\bar\phi_y+\bar\phi_x\phi_y)
    \\
    &
    \phantom{:}=
    \frac{1}{Z_{G,r^2}}
    \int
    d\sigma
    f(\sigma)
    \exp
    \sum_{\{x,y\}\in E}
    \frac{r_xr_y}2(\sigma_x\bar\sigma_y+\bar\sigma_x\sigma_y).
\end{equs}
For example, if we fix $\beta\in[0,\infty)$
and set $r:\Lambda_n\to[0,\infty),\,x\mapsto \sqrt\beta$,
then we have
\[
    \langle\blank\rangle_{(\Lambda_n,\E(\Lambda_n)),r^2}
    =
    \langle\blank\rangle_{(\Lambda_n,\E(\Lambda_n)),\beta}
    =
    \langle\blank\rangle_{n,\beta}.
\]
The new notation is thus entirely consistent with the definitions
in the introduction.
For simplicity, we always abbreviate the graph $(\Lambda_n,\E(\Lambda_n))$
by $n$ when it is used as a subscript,
and the choice of graph is sometimes omitted entirely for brevity.

Thus, for any $R:V\to[0,\infty)$, the partition function $Z_{G,R}$
is defined by
\[
    Z_{G,R}
    :=
    \int
    d\sigma
    \exp
    \sum_{\{x,y\}\in E}
    \frac{\sqrt{R_xR_y}}2(\sigma_x\bar\sigma_y+\bar\sigma_x\sigma_y)
    =
    \sum_{\n:\:\partial\n=0}
    \prod_{xy\in\vec E}
    \frac{(\sqrt{R_xR_y}/2)^{\n_{xy}}}{\n_{xy}!}
    ,
\]
and this notation is extended to real-valued functions $R$
by setting $Z_{G,R}$ to zero whenever $R$ takes a negative value
on at least one vertex.
We shall later express certain correlation functions as integrals
over ratios of partition functions.
For this purpose we introduce some more notation,
and state the Ginibre inequality in the form in which we need it.
For any function $\tau:V\to[0,\infty)$, we define
\[
    Z_{G,R}(\tau):=Z_{G,R-2\tau}
    ;
    \qquad
    z_{G,R}(\tau):=Z_{G,R}(\tau)/Z_{G,R}.
\]

\begin{corollary}
    \label{cor:gin}
    Consider a fixed radius function $r:V\to[0,\infty)$.
    Then we have
    \begin{equ}
        \label{Gin.a}
        \tag{Gini}
        z_{r^2}(\tau_1+\tau_2)
        \geq
        z_{r^2}(\tau_1)\cdot z_{r^2}(\tau_2)
    \end{equ}
    for any $\tau_1,\tau_2:V\to[0,\infty)$ whose support is disjoint.
\end{corollary}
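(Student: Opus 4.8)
The plan is to deduce the inequality \eqref{Gin.a} directly from Lemma~\ref{lemma:traditional_ginibre} by passing between the radius-squared parametrisation and the coupling-constant parametrisation. For $R:V\to[0,\infty)$ write $J(R):E\to[0,\infty)$, $J(R)_{xy}:=\sqrt{R_xR_y}$; by definition $Z_{G,R}=Z_{G,\{J(R)\}}$. Since $Z_{r^2}\geq1>0$, clearing denominators turns \eqref{Gin.a} into the equivalent statement
\[
    Z_{r^2-2\tau_1-2\tau_2}\cdot Z_{r^2}\;\geq\;Z_{r^2-2\tau_1}\cdot Z_{r^2-2\tau_2}.
\]
First I would dispose of the degenerate case: if $r^2-2\tau_1$ or $r^2-2\tau_2$ fails to be pointwise nonnegative, then the right-hand side vanishes and the inequality is trivial; moreover, because $\tau_1$ and $\tau_2$ have disjoint support, pointwise nonnegativity of both $r^2-2\tau_1$ and $r^2-2\tau_2$ forces pointwise nonnegativity of $r^2-2\tau_1-2\tau_2$. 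So I may assume that all four radius-squared functions below are genuine (nonnegative) radius-squared functions.

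To match the shape of Lemma~\ref{lemma:traditional_ginibre}, put $R_1:=r^2-2\tau_1-2\tau_2$, $R_2:=r^2-2\tau_1$, $R_3:=r^2-2\tau_2$, $R_4:=r^2$ and define
\[
    J:=J(R_1),\qquad A:=J(R_2)-J(R_1),\qquad B:=J(R_3)-J(R_1),\qquad C:=J(R_4)+J(R_1)-J(R_2)-J(R_3),
\]
so that $J+A=J(R_2)$, $J+B=J(R_3)$ and $J+A+B+C=J(R_4)$. Provided $J,A,B,C$ are nonnegative as functions on $E$, Lemma~\ref{lemma:traditional_ginibre} applied to this data reads precisely $Z_{\{J(R_1)\}}Z_{\{J(R_4)\}}\geq Z_{\{J(R_2)\}}Z_{\{J(R_3)\}}$, which is the displayed inequality. (Conceptually, this amounts to the assertion that $R\mapsto\log Z_{G,R}$ is supermodular on $[0,\infty)^V$.)

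Nonnegativity of $J$, $A$ and $B$ is immediate: $0\leq R_1\leq R_2$ and $0\leq R_1\leq R_3$ pointwise, and $(s,t)\mapsto\sqrt{st}$ is nondecreasing on $[0,\infty)^2$. The one step requiring thought — and the only place the disjoint-support hypothesis really enters — is the nonnegativity of $C$, which I would verify edge by edge. Fix $\{x,y\}\in E$. If $\{x,y\}$ is disjoint from $\mathrm{supp}\,\tau_1$ or disjoint from $\mathrm{supp}\,\tau_2$, then on $\{x,y\}$ two of the functions $R_i$ coincide with the other two and $C_{xy}=0$. Otherwise, since $\mathrm{supp}\,\tau_1\cap\mathrm{supp}\,\tau_2=\emptyset$, exactly one endpoint — say $x$ — lies in $\mathrm{supp}\,\tau_1$ and the other, $y$, lies in $\mathrm{supp}\,\tau_2$; then $\tau_{2,x}=\tau_{1,y}=0$, so the relevant values are $(R_{4,x},R_{4,y})=(r_x^2,r_y^2)$, $(R_{2,x},R_{2,y})=(r_x^2-2\tau_{1,x},\,r_y^2)$, $(R_{3,x},R_{3,y})=(r_x^2,\,r_y^2-2\tau_{2,y})$ and $(R_{1,x},R_{1,y})=(r_x^2-2\tau_{1,x},\,r_y^2-2\tau_{2,y})$, whence the mixed second difference factorises:
\[
    C_{xy}=\Bigl(\sqrt{r_x^2}-\sqrt{r_x^2-2\tau_{1,x}}\Bigr)\Bigl(\sqrt{r_y^2}-\sqrt{r_y^2-2\tau_{2,y}}\Bigr)\geq0,
\]
the last inequality by monotonicity of the square root together with the standing nonnegativity of $R_1$. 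This factorisation — the mixed second difference of $(s,t)\mapsto\sqrt{st}$ under the two disjoint perturbations being a product of two one-variable differences — is the crux; once it is in hand, I would collect the pieces, invoke Lemma~\ref{lemma:traditional_ginibre}, and divide by $Z_{r^2}^2>0$ to recover \eqref{Gin.a}. I expect essentially no obstacle beyond this factorisation; the rest is bookkeeping.
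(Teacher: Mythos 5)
Your proposal is correct and takes essentially the same approach as the paper: the paper's proof is a one-liner saying that one "expands the definitions, writes each partition function in terms of a traditional family of coupling constants, and applies Lemma~\ref{lemma:traditional_ginibre}", and your argument is precisely this, spelled out in full — including the right choice of $J,A,B,C$ and the edgewise factorisation of the mixed second difference of $\sqrt{st}$ (which is exactly where the disjoint-support hypothesis is used, both there and in guaranteeing $r^2-2\tau_1-2\tau_2\geq 0$).
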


This is the equivalent of Equation 36 in~\cite{frohlich1982triviality}.
The inequality may also be interpreted as a Fortuin--Kasteleyn--Ginibre lattice condition,
but we have no use for this interpretation in this article.

\begin{proof}[Proof of Corollary~\ref{cor:gin}]
    After expanding the definitions and writing each partition function
    in terms of a traditional family of coupling constants $J:E\to[0,\infty)$,
    the inequality is a direct consequence of Lemma~\ref{lemma:traditional_ginibre}.
\end{proof}

We also state another corollary which may be viewed as a \emph{monotonicity in domains}
property which has been applied abundantly in the analysis of the random-cluster model 
with parameters $p\in[0,1]$ and $q\in[1,\infty)$, see for example~\cite[Section~4.2]{grimmett2006random}.

\begin{corollary}
    Let $\tilde G=(\tilde V,\tilde E)$ denote a finite simple graph
    endowed with a radius function $\tilde r$.
    Let $V\subset \tilde V$, let $G$ denote the subgraph of $\tilde G$ induced
    by $V$, and let $r$ denote the restriction of $\tilde r$ to $V$.
    Fix $\tau:V\to[0,\infty)$, and extend this function to 
    a function $\tilde\tau:\tilde V\to[0,\infty)$ by setting $\tilde\tau(x):=0$
    for $x\in\tilde V\setminus V$.
    Then we have
    \begin{equ}
        \label{Mon}
        \tag{Mono}
        z_{\tilde G,\tilde r^2}(\tilde\tau)
        \leq
        z_{G,r^2}(\tau).
    \end{equ}
\end{corollary}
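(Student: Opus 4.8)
The plan is to obtain \eqref{Mon} as a direct consequence of \eqref{Gin.a}, by encoding the passage from the ambient graph $\tilde G$ to its induced subgraph $G$ as a Ginibre split \emph{on $\tilde G$ itself}. Concretely, I would introduce the test function $\nu:\tilde V\to[0,\infty)$ defined by $\nu_x:=0$ for $x\in V$ and $\nu_x:=\tilde r_x^2/2$ for $x\in\tilde V\setminus V$. Subtracting $2\nu$ from the radius-squared function $\tilde r^2$ sets the radius equal to zero on every vertex of $\tilde V\setminus V$; a vanishing radius at one endpoint of an edge makes the corresponding coupling constant $\sqrt{R_xR_y}/2$ vanish, so these vertices become isolated, and integrating out their spins against the Haar \emph{probability} measure contributes a factor $1$. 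Hence $Z_{\tilde G,\tilde r^2-2\nu}=Z_{G,r^2}$, using $\tilde r|_V=r$. Applying the identical observation to $\tilde r^2-2\tilde\tau-2\nu$, which equals $r^2-2\tau$ on $V$ and $0$ off $V$ because $\tilde\tau$ is supported on $V$, gives $Z_{\tilde G,\tilde r^2-2\tilde\tau-2\nu}=Z_{G,r^2-2\tau}$, with the usual convention that both sides vanish if $r^2-2\tau$ takes a negative value somewhere.

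Next, since $\tilde\tau$ is supported on $V$ while $\nu$ is supported on $\tilde V\setminus V$, the two functions have disjoint support, so Corollary~\ref{cor:gin} applied on $\tilde G$ with $\tau_1=\tilde\tau$ and $\tau_2=\nu$ yields $z_{\tilde G,\tilde r^2}(\tilde\tau+\nu)\geq z_{\tilde G,\tilde r^2}(\tilde\tau)\,z_{\tilde G,\tilde r^2}(\nu)$. I would then unwind the definitions using the two identities from the previous step:
\[
    z_{\tilde G,\tilde r^2}(\tilde\tau+\nu)=\frac{Z_{G,r^2-2\tau}}{Z_{\tilde G,\tilde r^2}},\qquad z_{\tilde G,\tilde r^2}(\nu)=\frac{Z_{G,r^2}}{Z_{\tilde G,\tilde r^2}}.
\]
Since $Z_{G,r^2}\geq 1$ (the sourceless current $\n\equiv 0$ already contributes $1$) and $Z_{\tilde G,\tilde r^2}$ is finite and positive, the quantity $z_{\tilde G,\tilde r^2}(\nu)$ lies in $(0,\infty)$, so dividing the Ginibre inequality by it gives $z_{\tilde G,\tilde r^2}(\tilde\tau)\leq Z_{G,r^2-2\tau}/Z_{G,r^2}=z_{G,r^2}(\tau)$, which is exactly \eqref{Mon}.

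I do not anticipate a genuine obstacle: the whole content of the statement is carried by Corollary~\ref{cor:gin}, and the role of this argument is merely to package ``deletion of the extra vertices'' as a Ginibre-type split with disjoint supports. The only points needing care are bookkeeping ones — verifying that an isolated vertex contributes exactly the factor $1$ coming from the Haar measure being a probability measure, and observing that in the degenerate case where $r^2-2\tau$ is negative somewhere, both sides of \eqref{Mon} vanish by the extension convention so the inequality holds trivially. If one prefers to avoid the case distinction, the argument can equivalently be run at the level of partition functions via Lemma~\ref{lemma:traditional_ginibre}, exactly as in the proof of Corollary~\ref{cor:gin}.
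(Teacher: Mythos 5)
Your proposal is correct and is essentially the same argument as in the paper: your $\nu$ is the paper's $\tilde\tau_2=(\tilde r^2/2)\cdot 1_{\tilde V\setminus V}$, and the inequality is obtained by applying Corollary~\ref{cor:gin} on $\tilde G$ to the disjointly supported pair $(\tilde\tau,\nu)$ and then identifying the two partition functions with zeroed-out radii off $V$ with partition functions on the induced subgraph $G$. The only difference is that you spell out the isolated-vertex factorisation and the degenerate case $r^2-2\tau<0$ explicitly, which the paper leaves implicit.
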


\begin{proof}
    Setting $\tilde\tau_2:=(\tilde r^2/2)\cdot 1_{\tilde V\setminus V}$, we have
    \[
        z_{\tilde G,\tilde r^2}(\tilde\tau)
        =
        \frac{
            Z_{\tilde G,\tilde r^2}(\tilde \tau)
        }{
            Z_{\tilde G,\tilde r^2}
        }
        \leq
        \frac{
            Z_{\tilde G,\tilde r^2}(\tilde \tau+\tilde\tau_2)
        }{
            Z_{\tilde G,\tilde r^2}(\tilde\tau_2)
        }
        =\frac{
            Z_{G,r^2}(\tau)
        }
        {Z_{G,R^2}}
        =z_{G,r^2}(\tau),
    \]
    where the inequality is due to the previous corollary.
\end{proof}

\section{The Poissonian structure}
\label{sec:Poisson}

This section considers a general fixed finite simple graph $G=(V,E)$.

\subsection{The gauge lemma}

We first state a simple but (to the best knowledge of the author)
original identity.
This lemma is key to recovering the BFS random walk directly for the XY model.
Note that the function $J$ appearing in the statement may take
different values on the two orientations of the same simple edge.

\begin{lemma}[Gauge lemma]
    \label{lemma:gauge}
    For any function $J:\vec E\to\C$, for any current $\n:\vec E\to\Z_{\geq 0}$,
    and for any function $g:V\to\C\setminus\{0\}$, we have
    \[
        J^{\bf n}
        :=
        \prod_{xy\in\vec E}J_{xy}^{{\bf n}_{xy}}
        =
        g^{-\partial{\bf n}}
            \prod_{xy\in\vec E}\left(
                \frac{g_x}{g_y}J_{xy}
            \right)^{{\bf n}_{xy}}
        ;
        \qquad
        g^{-\partial{\bf n}}:=\prod_{x\in V}g_x^{-\partial{\bf n}_x}.
    \]
\end{lemma}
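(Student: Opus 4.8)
The plan is to prove the identity by a direct expansion of the right-hand side followed by a regrouping of the factors according to vertices rather than directed edges. First I would note that, since $g$ takes values in $\C\setminus\{0\}$, every power $g_x^k$ with $k\in\Z$ (possibly negative) is well-defined, so all the products below — in particular $g^{-\partial\n}=\prod_{x\in V}g_x^{-\partial\n_x}$ — make sense even when $\partial\n$ is negative somewhere. This is the only place where the hypothesis $g\neq 0$ is used.

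The computation then has three steps. First, expand the product on the right, separating the $J$-factors from the $g$-factors:
\[
\prod_{xy\in\vec E}\left(\frac{g_x}{g_y}J_{xy}\right)^{\n_{xy}}
=\left(\prod_{xy\in\vec E}J_{xy}^{\n_{xy}}\right)\cdot\prod_{xy\in\vec E}g_x^{\n_{xy}}\,g_y^{-\n_{xy}}
=J^{\n}\cdot\prod_{xy\in\vec E}g_x^{\n_{xy}}\,g_y^{-\n_{xy}}.
\]
Second, regroup the last product vertex by vertex: a fixed $g_v$ picks up a contribution $+\n_{vy}$ from each directed edge $vy$ leaving $v$ and a contribution $-\n_{xv}$ from each directed edge $xv$ entering $v$, so that
\[
\prod_{xy\in\vec E}g_x^{\n_{xy}}\,g_y^{-\n_{xy}}
=\prod_{v\in V}g_v^{\sum_{y\sim v}(\n_{vy}-\n_{yv})}
=\prod_{v\in V}g_v^{\partial\n_v}
=g^{\partial\n},
\]
where the middle equality is exactly the definition of the source function $\partial\n$. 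Third, combine the two displays: the right-hand side of the claimed identity equals $g^{-\partial\n}\cdot J^{\n}\cdot g^{\partial\n}=J^{\n}$, the cancellation $g^{-\partial\n}g^{\partial\n}=1$ being valid vertex by vertex. This is precisely the left-hand side.

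I do not expect a real obstacle here: the statement is a bookkeeping identity, and the only point requiring a little care is the interchange between the sum over directed edges and the sum over vertices in the second step, together with the observation that $g\neq 0$ is what makes the possibly negative powers $g_v^{\partial\n_v}$ meaningful. As a sanity check, or as an alternative write-up, one can argue by induction on $\sum_{xy}\n_{xy}$: for $\n\equiv 0$ both sides equal $1$, and incrementing $\n_{xy}$ by one multiplies both sides by the same factor $J_{xy}$ — on the right, the extra gauge factor $\frac{g_x}{g_y}$ produced by the product is cancelled by the change $g^{-\partial\n}\mapsto g^{-\partial\n}g_x^{-1}g_y$, since raising $\n_{xy}$ by one raises $\partial\n_x$ by one and lowers $\partial\n_y$ by one. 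I would nonetheless present the direct expansion above, as it is the cleanest.
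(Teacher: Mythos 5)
Your proof is correct and is exactly the computation the paper has in mind when it says the lemma ``is immediate from the definition of $\partial\n$'': you expand the $g$-factors, regroup by vertex to recognise $g^{\partial\n}$, and cancel. The paper omits the verification entirely, so your write-up is simply the spelled-out version of the same argument.
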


\begin{proof}
    The lemma is immediate from the definition of $\partial\n$.
\end{proof}

\begin{corollary}
    \label{cor:toberewritten}
    For any radius function $r:V\to[0,\infty)$
    and any $a:V\to\Z$, we have
    \[
        Z_{r^2}\langle\sigma^a\rangle_{r^2}    
        =
        r^a
        \sum_{
            \n:\:
            \partial\n=-a
        }
        \prod_{xy\in\vec E}
        \frac{(r_x^2/2)^{\n_{xy}}}{\n_{xy}!}.
    \]
\end{corollary}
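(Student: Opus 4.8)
The plan is to expand both sides in the current representation and match them term by term. First I would write the left-hand side using the series expansion of the exponential exactly as in the derivation of~\eqref{eq:standard_expansion}, but now in the presence of the observable $\sigma^a := \prod_{x\in V}\sigma_x^{a_x}$. Concretely,
\[
    Z_{r^2}\langle\sigma^a\rangle_{r^2}
    =
    \int d\sigma\,\sigma^a
    \exp\sum_{xy\in\vec E}\frac{r_xr_y}{2}\sigma_x\bar\sigma_y
    =
    \sum_{\n}\int d\sigma\,\sigma^a
    \prod_{xy\in\vec E}\frac{(r_xr_y\sigma_x\bar\sigma_y/2)^{\n_{xy}}}{\n_{xy}!},
\]
where the sum is over all currents $\n:\vec E\to\Z_{\geq 0}$. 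Collecting the powers of each $\sigma_x$, the integrand carries the monomial $\sigma^{a+\partial\n}$ times a nonnegative constant depending only on $\n$ and $r$. Since $d\sigma$ is Haar measure on $(\S^1)^V$, the integral $\int d\sigma\,\sigma^{a+\partial\n}$ equals $1$ if $a+\partial\n=0$ and $0$ otherwise; this is the only place where the structure of the circle is used, and it selects exactly the currents with $\partial\n=-a$.

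Next I would separate the radius weights from the rest: on the surviving terms $\partial\n=-a$, the product $\prod_{xy\in\vec E}(r_xr_y)^{\n_{xy}}$ can be rewritten using the gauge lemma (Lemma~\ref{lemma:gauge}) with the choice $J_{xy}=r_xr_y$, $g=r$ — or more directly by noting that $\prod_{xy}(r_xr_y)^{\n_{xy}} = \prod_{x\in V} r_x^{\sum_{y}(\n_{xy}+\n_{yx})}$, and comparing the exponent of $r_x$ with $\n_{xy}-\n_{yx}$ versus $\n_{xy}+\n_{yx}$. Matching this against the target expression $r^a\prod_{xy}(r_x^2/2)^{\n_{xy}}$, where $r^a=\prod_x r_x^{a_x}$ and the product contributes $r_x^{\sum_y \n_{xy}}$, one checks the $r_x$-exponents agree precisely when $a_x=-\partial\n_x=\sum_y(\n_{yx}-\n_{xy})$, which is exactly the constraint $\partial\n=-a$ already imposed. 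So the two sides have identical $\n$-indexed terms, and the identity follows by summing over $\{\n:\partial\n=-a\}$.

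I do not expect a serious obstacle here — the statement is essentially a bookkeeping identity and a mild generalisation of~\eqref{eq:standard_expansion}. The one point that needs care is the power-of-$r$ accounting: making sure that the factor $r^a$ outside the sum, together with the $(r_x^2/2)^{\n_{xy}}$ inside, reproduces exactly $\prod_{xy}(r_xr_y/2)^{\n_{xy}}$ on the source-constrained currents. This is where one uses the gauge lemma cleanly: applying it with $g=r$ turns $\prod_{xy}(r_xr_y/2)^{\n_{xy}}$ into $r^{-\partial\n}\prod_{xy}(r_x^2/2)^{\n_{xy}} = r^{a}\prod_{xy}(r_x^2/2)^{\n_{xy}}$ on the locus $\partial\n=-a$, which is precisely the claimed right-hand side. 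A minor secondary point is that when $r_x=0$ for some $x$ one should interpret $0^0=1$ so that the degenerate terms vanish correctly; this matches the convention under which $Z_{G,R}$ was extended to functions $R$ vanishing somewhere, and causes no trouble.
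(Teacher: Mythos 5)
Your argument is correct and matches the paper's proof essentially verbatim: expand the exponential in the current representation, integrate over the Haar measure to enforce $\partial\n=-a$, and then apply the gauge lemma with $g=r$ to replace $\prod_{xy}(r_xr_y/2)^{\n_{xy}}$ by $r^{-\partial\n}\prod_{xy}(r_x^2/2)^{\n_{xy}}=r^a\prod_{xy}(r_x^2/2)^{\n_{xy}}$. The direct power-counting alternative you sketch is just the gauge lemma unwound by hand, so it is the same calculation.
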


\begin{proof}
    Claim that
    \begin{multline}
        Z_{r^2}\langle\sigma^a\rangle_{r^2}
        =
    \int
    d\sigma
    \cdot
    \sigma^a
    \exp
    \sum_{\{x,y\}\in E}
    \frac{r_xr_y}2(\sigma_x\bar\sigma_y+\bar\sigma_x\sigma_y)
    \\
    =
    \sum_{
            \n
            :\:
            \partial\n=-a
        }
        \prod_{xy\in\vec E}
        \frac{(r_xr_y/2)^{\n_{xy}}}{\n_{xy}!}
        =r^a\sum_{
            \n
            :\:
            \partial\n=-a
        }
        \prod_{xy\in\vec E}
        \frac{(r_x^2/2)^{\n_{xy}}}{\n_{xy}!}.
    \end{multline}
    The first equality follows from the definition of $\langle\blank\rangle_{r^2}$,
    the second by expanding all exponentials and
    integrating each term,
    and the final one from the gauge lemma with $g=r$.
\end{proof}

\subsection{Poisson interpretation}

\begin{definition}
    For a \emph{local time function} $T:V\to[0,\infty)$,
    let 
    \[\|T\|:=\sum_{xy\in\vec E}T_x=\sum_{x\in V}\deg_G x\cdot T_x,\] and
    let $\M_{G,T}'$ denote $e^{\|T\|}$ times the Poisson point process on $\vec E\times[0,\infty)$
    with a density of $1$ on the set
    \[
        \{(xy,\tau):\tau\leq T_x\},
    \]
    and $0$ elsewhere.
    The points $(xy,\tau)$ in this Poisson point process are called \emph{Poisson edges};
    they are directed edges decorated with some time.
    Observe that the expectation of the number of Poisson edges in 
    the normalised version of $\M_{G,T}'$
    is precisely $\|T\|$, and we view $\M_{G,T}'$ as a non-normalised version 
    of this probability measure.
    Let $\Pi\subset\vec E\times[0,\infty)$ denote the set of Poisson edges,
    and write $\n=\n(\Pi)$ for the random current which counts for each directed edge $xy$ the number of 
    Poisson edges in $\{xy\}\times[0,\infty)$.
    \end{definition}

The distribution of $\n$ is the same in $\M_{n,\beta/2}$ and $\M_{n,\beta/2}'$,
which justifies the omission of the apostrophe in the notation
without losing consistency with the introduction.
We also sometimes omit the reference to the graph $G$.
The following result captures Corollary~\ref{cor:toberewritten}
in the new formalism.

\begin{corollary}
    \label{cor:npointsigma}
    For any radius function $r:V\to[0,\infty)$
    and any $a:V\to\Z$, we have
    \[
        Z_{r^2}\langle\sigma^a\rangle_{r^2}    
        =
        r^a
        \cdot
        \M_{r^2/2}[\partial\n=-a].
    \]
\end{corollary}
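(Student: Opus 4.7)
The plan is to reduce the corollary to Corollary~\ref{cor:toberewritten} by computing the pushforward of $\M_{r^2/2}$ under the map $\Pi\mapsto\n(\Pi)$, that is, by marginalising out the time coordinates of the Poisson edges. All the work happens on the Poissonian side; the XY side is already taken care of by the previous corollary, which gives
\[
    Z_{r^2}\langle\sigma^a\rangle_{r^2}
    =
    r^a\sum_{\n:\,\partial\n=-a}\prod_{xy\in\vec E}\frac{(r_x^2/2)^{\n_{xy}}}{\n_{xy}!}.
\]
So it suffices to show that for each fixed current $\n_0:\vec E\to\Z_{\geq 0}$,
\[
    \M_{r^2/2}[\n=\n_0]=\prod_{xy\in\vec E}\frac{(r_x^2/2)^{\n_{0,xy}}}{\n_{0,xy}!},
\]
and then sum over currents with $\partial\n_0=-a$.

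To prove this identity, I would use that by the definition of $\M'_{G,T}$ with $T=r^2/2$, the random set $\Pi$ is (up to the deterministic factor $e^{\|T\|}$) a Poisson point process on $\vec E\times[0,\infty)$ whose intensity restricted to the fiber $\{xy\}\times[0,\infty)$ has total mass $T_x=r_x^2/2$. Since these fibers are disjoint, the counts $\n_{xy}=\#(\Pi\cap\{xy\}\times[0,\infty))$ are independent Poisson random variables with parameter $r_x^2/2$ under the normalised measure. Hence
\[
    \M'_{G,T}[\n=\n_0]
    =e^{\|T\|}\prod_{xy\in\vec E}\frac{(r_x^2/2)^{\n_{0,xy}}e^{-r_x^2/2}}{\n_{0,xy}!}.
\]
The factor $e^{\|T\|}=\prod_{xy\in\vec E}e^{T_x}=\prod_{xy}e^{r_x^2/2}$ is precisely what is needed to cancel the Poisson normalising factors $e^{-r_x^2/2}$, yielding the displayed identity for $\M_{r^2/2}[\n=\n_0]$.

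Combining this identity with Corollary~\ref{cor:toberewritten} by summing over $\n_0$ satisfying $\partial\n_0=-a$ gives the claim. There is no real obstacle here: the only conceptual point to make explicit is that the non-standard normalisation $e^{\|T\|}$ of $\M'_{G,T}$ is rigged exactly so that the "probabilities" $\M_{r^2/2}[\n=\n_0]$ coincide with the monomials appearing in the expansion of the partition function, which is the whole reason for introducing the Poissonian formalism in this section.
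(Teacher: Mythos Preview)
Your proof is correct and follows the same approach as the paper. The paper does not give a separate proof of this corollary; it simply remarks that the distribution of $\n$ is the same in $\M_{n,\beta/2}$ and $\M_{n,\beta/2}'$, so that the statement is just Corollary~\ref{cor:toberewritten} restated in the Poisson formalism --- your argument makes this verification explicit.
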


Although this formula is true in general, we only use it in this article
to expand the two-point function and the partition function.

The random points appearing in a Poisson point process may be explored in several
convenient ways.
In our specific setup, we shall always use the following property.
Consider the measure $\M_T$, and
let $u\in V$ denote a specific vertex.
We would like to explore the outgoing Poisson edge from $u$ with the highest time
(such a Poisson edge may not exist).
Write $(uv,T_u-\tau)\in\Pi$ for this Poisson edge whenever it exists.
Write $T^\tau$ for the function $T$ except that $T_u$ is replaced by $T_u-\tau$.
Then
\begin{equation}
    \label{eq:Poisson_decomposition}
    \M_T[\blank]
    =
    \M_{T^{T_u}}[\blank]+\sum_{v\sim u}\int_0^{T_u}\delta_{(uv,T_u-\tau)}\times\M_{T^\tau}[\blank]d\tau.
\end{equation}
The term on the left corresponds to the case that $\Pi$ contains no outgoing Poisson edge from $u$.
The term on the right corresponds to the case that some edge is present.
The notation $\delta_{(uv,T_u-\tau)}\times\M_{T^\tau}$ means that there is always a Poisson edge at $(uv,T_u-\tau)$.
The equation is particularly clean because we choose to work with non-normalised measures.

The parameter $\tau$ records how much time has passed before finding the Poisson edge.
In fact, we may define a slightly more general exploration process,
which stops when this parameter $\tau$ hits some constant $s\in[0,T_u]$.
The more general formula reads
\begin{equation}
    \label{eq:Poisson_decomposition2}
    \M_T[\blank]
    =
    \M_{T^s}[\blank]+\sum_{v\sim u}\int_0^s\delta_{(uv,T_u-\tau)}\times\M_{T^\tau}[\blank]d\tau.
\end{equation}

\section{The two-point backbone}
\label{sec:twopoint}

The purpose of this section is to find
convenient decompositions of the non-normalised
two-point function
\(Z_{r^2}\langle\sigma_x\bar\sigma_y\rangle_{r^2}\)
in an arbitrary finite simple graph $G=(V,E)$.

\subsection{Preliminaries: walks and simplex integrals}

\begin{definition}[Walks]
    A \emph{walk} is a finite sequence of vertices
    $\omega=(\omega_k)_{0\leq k\leq n}\subset V$
    such that subsequent vertices are neighbours.
    The length of such a walk $\omega$ is denoted $\ell(\omega):=n$.
    We let $k(\omega):V\to\Z_{\geq 0}$ denote the function which counts
    how often $\omega$ visits each vertex, that is,
    \[
        k(\omega)_x:=\sum_{k=0}^{\ell(\omega)}1_{\omega_k=x}.    
    \]
    We shall also write $\omega^*:=(\omega_k)_{0\leq k\leq \ell(\omega)-1}$
    for the walk $\omega$ with the last vertex removed.
\end{definition}

\begin{definition}[Sets of walks]
    Let $\Gamma$ denote the set of walks through $G$,
    and write
    \begin{multline}
        \Gamma_{xy}:=\{\omega\in\Gamma:\omega_0=x,\,\omega_{\ell(\omega)}=y\};\\
        \Gamma_{xy}^{(a)}:=\{\omega\in\Gamma_{xy}:k(\omega)_y=a\};\\
        \Gamma_{xy}[A]:=\{\omega\in\Gamma_{xy}:\omega^*\subset A\};
    \end{multline}
    for any $x,y\in V$, $a\in\Z_{\geq 1}$, and $A\subset V$.
\end{definition}

\begin{definition}[Simplex measures]
We now define a sequence of measures $(\rho_k)_{k\in\Z_{\geq 0}}$ on $[0,\infty)$.
The measure $\rho_0$ is the Dirac measure at $0\in[0,\infty)$, and
all other measures are defined by
\[
    d\rho_k(\tau)=\frac{\tau^{k-1}}{(k-1)!}d\tau
\]
where $d\tau$ denotes Lebesgue measure on $[0,\infty)$.  In particular, $\rho_1$ is
just the Lebesgue measure.  These measures satisfy a number of useful relations,
which can all be traced back to the following lemma.
\end{definition}

\begin{lemma}
    \label{lemma:simplex}
    For any $n,m\in\Z_{\geq 0}$, the distribution of $\tau_1+\tau_2$ in
    $\rho_n\times\rho_m$ equals the distribution of $\tau$ in $\rho_{n+m}$.
    In other words, for any integrable function $f:[0,\infty)\to\C$,
    we have
    \begin{equ}
        \label{simplex}
        \tag{Simplex}
        \int\int f(\tau_1+\tau_2)d\rho_n(\tau_1)d\rho_m(\tau_2)
        =
        \int f(\tau)d\rho_{n+m}(\tau).
    \end{equ}
Observe also that $\rho_n([0,\lambda])=\lambda^n/n!$ for any $\lambda\in[0,\infty)$.
\end{lemma}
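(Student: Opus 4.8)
The plan is to prove the identity \eqref{simplex} directly by an elementary computation, and to observe that it is really just the statement that convolving the densities $\tau^{n-1}/(n-1)!$ and $\tau^{m-1}/(m-1)!$ produces $\tau^{n+m-1}/(n+m-1)!$, which is the familiar Beta-function identity. First I would dispose of the degenerate cases. If $n=0$, then $\rho_n=\delta_0$, so the left-hand side reads $\int f(0+\tau_2)\,d\rho_m(\tau_2)=\int f(\tau)\,d\rho_m(\tau)=\int f(\tau)\,d\rho_{0+m}(\tau)$, as required; the case $m=0$ is symmetric. So from now on assume $n,m\geq 1$, in which case both $\rho_n$ and $\rho_m$ are absolutely continuous with respect to Lebesgue measure on $[0,\infty)$.

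For the main case, I would substitute the explicit densities and change variables. Writing $s=\tau_1+\tau_2$ with $\tau_1$ ranging over $[0,s]$, we get
\[
    \int_0^\infty\!\!\int_0^\infty f(\tau_1+\tau_2)\,\frac{\tau_1^{n-1}}{(n-1)!}\,\frac{\tau_2^{m-1}}{(m-1)!}\,d\tau_1\,d\tau_2
    =
    \int_0^\infty f(s)\left(\int_0^s \frac{\tau_1^{n-1}}{(n-1)!}\,\frac{(s-\tau_1)^{m-1}}{(m-1)!}\,d\tau_1\right)ds,
\]
using Fubini (legitimate since $f$ is integrable and the densities are nonnegative, so one may reduce to $f\geq 0$ and invoke Tonelli, then split into positive and negative parts). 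The inner integral is evaluated by the substitution $\tau_1=su$, which turns it into $\frac{s^{n+m-1}}{(n-1)!\,(m-1)!}\int_0^1 u^{n-1}(1-u)^{m-1}\,du = \frac{s^{n+m-1}}{(n-1)!\,(m-1)!}\cdot B(n,m)$. Since $B(n,m)=\frac{(n-1)!\,(m-1)!}{(n+m-1)!}$ for positive integers $n,m$, the bracket collapses to $\frac{s^{n+m-1}}{(n+m-1)!}$, which is exactly $d\rho_{n+m}/ds$. This proves \eqref{simplex}.

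Finally, for the last assertion, $\rho_n([0,\lambda])=\int_0^\lambda \tau^{n-1}/(n-1)!\,d\tau=\lambda^n/n!$ for $n\geq 1$, and for $n=0$ we have $\rho_0([0,\lambda])=\delta_0([0,\lambda])=1=\lambda^0/0!$, so the formula holds for all $n\geq 0$. The only point requiring any care is the justification of Fubini's theorem and the reduction to nonnegative $f$; everything else is a one-line Beta-integral evaluation, so I do not expect any genuine obstacle here. One could alternatively prove \eqref{simplex} by induction on $n$ using integration by parts, but the Beta-integral route is cleaner and makes the ``simplex'' name transparent: $\rho_n([0,\lambda])=\lambda^n/n!$ is the volume of the $n$-dimensional simplex $\{0\le t_1\le\cdots\le t_n\le\lambda\}$, and \eqref{simplex} reflects the decomposition of such a simplex according to the value of its $n$-th coordinate.
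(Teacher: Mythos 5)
The paper states Lemma~\ref{lemma:simplex} without proof, treating it as a standard fact; your proposal correctly supplies the missing argument. The handling of the degenerate $n=0$ or $m=0$ cases via $\rho_0=\delta_0$, the Tonelli/Fubini reduction, the change of variables $\tau_1=su$, and the Beta-integral evaluation $B(n,m)=\frac{(n-1)!\,(m-1)!}{(n+m-1)!}$ are all correct, and the closing observation $\rho_n([0,\lambda])=\lambda^n/n!$ is verified exactly as the paper intends. There is nothing to compare against in the paper, so no gap to report.
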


\subsection{A first version of the exploration process}

We continue with the decomposition of our favourite observable, namely
\[
    Z_{r^2}\langle\sigma_x\bar\sigma_y\rangle_{r^2}
    =
    \frac{r_x}{r_y}\M_{r^2/2}[\partial{\bf n}=1_y-1_x].
\]
We first decompose in a naïve way,
then propose a slight modification of this first attempt which will help us later.
We only consider the case that $x\neq y$ at this moment.
We now state this first
naïve strategy even though the language may not yet make complete sense.

\begin{strategy}[Naïve exploration]
    \label{strategy:naive}
    We start our exploration at the vertex $y$,
    exploring the outgoing Poisson edge from $y$
    with the highest time.
    If this edge does not point towards $x$, but rather
    to some other vertex $v$, we explore the outgoing Poisson edge from $v$
    with the highest time.
    We continue this exploration process until we hit the vertex $x$.
    The exploration process stops instantaneously as it hits $x$,
    that is, it spends zero time at the vertex $x$.
    Observe that this exploration process yields a random walk from $y$ to $x$
    which visits $x$ exactly once.
\end{strategy}

We apply~\eqref{eq:Poisson_decomposition} with $u=y$.
Since we are measuring only those configurations for which $y$
has at least one outgoing edge,
the left term in~\eqref{eq:Poisson_decomposition} disappears, and
we get
\begin{multline}
    \label{eq:one_iteration}
    Z_{r^2}\langle\sigma_x\bar\sigma_y\rangle_{r^2}
    =
    \frac{r_x}{r_y}\sum_{v\sim y}\int_0^{T_y}(\delta_{(yv,T_y-\tau)}\times\M_{T^\tau})[\partial{\bf n}=1_y-1_x]d\tau
    \\=
    \frac{r_x}{r_y}\sum_{v\sim y}\int_0^{T_y}\M_{T^\tau}[\partial{\bf n}=1_v-1_x]d\tau.
\end{multline}
The sum on the right has different terms.
For example, if $x\sim y$, then one of the terms 
corresponds to the case that $v=x$.
That term corresponds to an integral over $\M_{T^\tau}[\partial{\bf n}=0]$.
But by application of Corollary~\ref{cor:npointsigma} with $a\equiv 0$,
this integrand is precisely the partition function of the XY model 
with radii
\[
    r'_z:=\sqrt{2T^\tau_z}
    =\begin{cases}
        r_z&\text{if $z\neq y$,}\\
        \sqrt{r_z^2-2\tau}&\text{if $z=y$.}
    \end{cases}
\]
This expansion yields the first step of our backbone or BFS random walk.
In order to obtain the second step,
we expand all integrands $\M_{T^\tau}[\partial{\bf n}=1_v-1_x]$
in the exact same way,
except for the integrand corresponding to the term $v=x$,
which we leave as is.
We may iterate this process indefinitely,
and using properties of the Poisson process it is easy to see
that the value of the remainder
(the part of the integrand that must still be expanded)
tends to zero
with the number of iterations.

For any walk $\omega$ and for any $\tau\in [0,\infty)^{\{0,\dots,\ell(\omega)\}}$,
let $\tau(\omega)_u$ denote the  \emph{time spent at $u$},
defined by
\[
    \tau(\omega):V\to[0,\infty),\,
    u\mapsto \sum_{k\in \{0,\dots,\ell(\omega)\}:\:\omega_k=u}\tau_k.
\]
Observe that if $\omega\in\Gamma_{yx}^{(1)}$
and $\tau\in [0,\infty)^{\{0,\dots,\ell(\omega^*)\}}$,
then
we have $\tau(\omega^*)_x=0$.
The infinitely repeated expansion yields
\[
    Z_{r^2}\langle\sigma_x\bar\sigma_y\rangle_{r^2}
    =
    \frac{r_x}{r_y}
    \sum_{\omega\in \Gamma_{yx}^{(1)}}
    \int_0^\infty d\tau_0
    \; \dots \int_0^\infty d\tau_{\ell(\omega^*)}
    \cdot
    1_{\{\tau(\omega^*)\leq r^2/2\}}
    \M_{r^2/2-\tau(\omega^*)}[\partial{\bf n}=0].
\]
This expansion is really nothing more than the iterated version of~\eqref{eq:one_iteration},
except for the newly introduced notation and for the integration bounds which are absorbed into the indicator 
function.
Since the integrand depends only on $\tau(\omega^*)$,
we may replace the complicated integrals by their marginal measure on $\tau(\omega^*)$.
For this purpose, let $\rho_{\omega^*}$ denote the distribution
of $\tau(\omega^*)$
on $[0,\infty)^V$
under 
the measure
\begin{equation}
    \label{eq:simpleintegrals}
    \int_0^\infty d\tau_0
    \; \dots \int_0^\infty d\tau_{\ell(\omega^*)}.
\end{equation}
Thus, even without specifying what the measure $\rho_{\omega^*}$ looks like,
it is obvious that
\begin{equation}
    \label{eq:naive_backbone}
    Z_{r^2}\langle\sigma_x\bar\sigma_y\rangle_{r^2}
    =
    \frac{r_x}{r_y}
    \sum_{\omega\in \Gamma_{yx}^{(1)}}
    \int
    _{\{\tau\leq r^2/2\}}
    \M_{r^2/2-\tau}[\partial{\bf n}=0]
    d\rho_{\omega^*}(\tau).
\end{equation}
Observe that~\eqref{eq:simpleintegrals} and Lemma~\ref{lemma:simplex} imply that
\[
    \rho_{\omega^*}
    :=
    \rho_{k(\omega^*)}
    :=
    \prod_{u\in V}\rho_{k(\omega^*)_u}.
\]
But $\M_{r^2/2-\tau}[\partial{\bf n}=0]\cdot 1_{\{\tau\leq r^2/2\}}=Z_{r^2}(\tau)$,
and therefore we divide by $Z_{r^2}$ to get
\begin{equ}
    \tag{2p.i}
    \label{2p.naive}
    \langle\sigma_x\bar\sigma_y\rangle_{r^2}
    =
    \frac{r_x}{r_y}
    \sum_{\omega\in \Gamma_{yx}^{(1)}}
    \int
    z_{r^2}(\tau)
    d\rho_{\omega^*}(\tau).
\end{equ}

This is exactly a backbone expansion for the XY model.
The expansion is useful (we use it on one occasion),
but it has a drawback:
it is not (at least visually) symmetric under interchanging the roles of $x$ and $y$.
Therefore we propose an alternative strategy.

\begin{strategy}[Improved exploration]
    \label{explo:II}
    Fix an arbitrary constant $s\in [0,r_x^2/2]$.
    We start our exploration at the vertex $y$,
    and continue our random walk until we hit the vertex $x$.
    However, after hitting the vertex $x$,
    we keep exploring until the walk has spend a total time 
    of exactly $s$ at the vertex $x$.
    This means in particular that the random walk almost surely
    terminates at the vertex $x$. 
\end{strategy}

For each $s$ this yields an expansion of $\langle\sigma_x\bar\sigma_y\rangle_{r^2}$
similar to~\eqref{2p.naive}.
In fact, we may think of~\eqref{2p.naive} as realising this expansion 
with $s=0$.
The new expansion is valid even if $y=x$.
Once we found this expansion,
we integrate $s$ over the interval $[0,r_x^2/2]$.
This interval has length $r_x^2/2$.
Thus, by doing so, we must renormalise the integral by dividing by 
this interval length.

Let us first do the expansion for fixed $s\in[0,r_x^2/2]$.
Recall that $\Gamma_{yx}$ denotes the set of walks from $y$ to $x$ (which may visit $x$ multiple times).
The walk may now spend at most a time of $s$ at $x$
in all visits but the last, and on the final visit it stays there for the remaining time
provided by $s$.
Working out all the details yields
\begin{equation}
    \label{eq:expansion_stop_s}
    Z_{r^2}\langle\sigma_x\bar\sigma_y\rangle_{r^2}
    =
    \frac{r_x}{r_y}\sum_{\omega\in \Gamma_{yx}}
    \int_{\{\tau\leq r^2/2\}}1_{\{\tau_x\leq s\}}
    \M_{r^2/2-(\tau\vee (s\cdot 1_x))}[\partial{\bf n}=0]
    d\rho_{\omega^*}(\tau)
\end{equation}
for any $s\in[0,r_x^2/2]$.
Observe that $\tau_x\leq s$, and therefore the function $\tau\vee (s\cdot 1_x)$
is just
\[
    u\mapsto \begin{cases}
        \tau_u&\text{if $u\neq x$,}\\
        s&\text{if $u=x$.}
    \end{cases}    
\]
Just like before the expansion may be rewritten into
\begin{equ}
    \tag{2p.ii}
    \label{2p.anytime}
    \langle\sigma_x\bar\sigma_y\rangle_{r^2}
    =
    \frac{r_x}{r_y}\sum_{\omega\in \Gamma_{yx}}
    \int_{\{\tau_x\leq s\}}
    z_{r^2}(\tau\vee (s\cdot 1_x))
    d\rho_{\omega^*}(\tau).
\end{equ}
This expansion is valid for each
$s\in[0,r_x^2/2]$.
We now make the (in some sense completely arbitrary) choice
to integrate out $s$.
This gives
\[
    \langle\sigma_x\bar\sigma_y\rangle_{r^2}=
    \frac2{r_xr_y}\sum_{\omega\in \Gamma_{yx}}\int_0^{r_x^2/2}ds\int_{\{\tau_x\leq s\}}
    z_{r^2}(\tau\vee (s\cdot 1_x))
    d\rho_{\omega^*}(\tau).
\]
Writing $s':=s-\tau_x$ and $\tau':=\tau+s'\cdot 1_x$, we may write this as
\[
    \langle\sigma_x\bar\sigma_y\rangle_{r^2}=
    \frac2{r_xr_y}\sum_{\omega\in\Gamma_{yx}}\int_0^\infty ds'\int
    z_{r^2}(\tau')
    d\rho_{\omega^*}(\tau);
\]
the integration bounds are conveniently absorbed in the definition of $z_{r^2}(\tau')$,
which is zero unless $\tau'\leq r^2/2$.
Lemma~\ref{lemma:simplex} enables us to simplify this to:
\begin{equ}
    \label{2p.integrated}
    \tag{2p.iii}
    \langle\sigma_x\bar\sigma_y\rangle_{r^2}
    =
    \frac{2}{r_xr_y}
    \sum_{\omega\in \Gamma_{yx}}\int
    z_{r^2}(\tau)
    d\rho_{\omega}(\tau).
\end{equ}    

\begin{remark}
Equation~\eqref{2p.integrated} is really the form of the two-point function which also appears in the work of 
BFS, except that we have now realised it through a probabilistic exploration process
(with a random stopping time) and directly for the XY model.
\end{remark}

\begin{remark}
    The measure $\rho_\omega$ is invariant under reversing the direction of the path $\omega$,
    because $k(\omega)$ counts all visits including the final visit to $x$.
    Therefore~\eqref{2p.integrated} is also invariant under reversing the direction of all paths,
    that is, replacing $\Gamma_{yx}$ by $\Gamma_{xy}$.
    The expansions~\eqref{2p.anytime} and~\eqref{2p.integrated} hold true also whenever $x=y$.
    Equation~\eqref{2p.naive} also holds true when $x=y$, but in that case
    the sum is over just one path $\omega$, namely the path of length zero 
    which only visits the vertex $x=y$.
\end{remark}


\section{Cycle decomposition of the Poisson edges}
\label{sec:comb}

We work on an arbitrary finite simple graph $G=(V,E)$ in this section
until the last subsection, where we return to the square lattice graph.
Let $T:V\to [0,\infty)$ denote a local time function.
Here and in the sequel,
we shall write $\P_{G,T}$ for the probability measure
\[
    \P_{G,T}:=\M_{G,T}[\blank |\partial\n=0],
\]
and we let $\E_{G,T}$ denote the corresponding expectation functional.
Consider a sample $\Pi$ from $\P_{G,T}$.
We shall always suppose that the times of all Poisson edges in $\Pi$
are distinct, which is almost surely true.
For any set $A$ of Poisson edges, we write
$V(A)\subset V$ for the set of vertices incident to some Poisson edge in $A$.
A \emph{cycle} of $\Pi$ is a nontrivial set of Poisson edges $\eta\subset\Pi$
such that $(V(\eta),\eta)$ is connected and such that each vertex 
in this graph has in-degree and out-degree one.
The purpose of this section is to find a natural partition of $\Pi$
into cycles and to state several combinatorial consequences of this
decomposition.
The partition has an abstract definition as the unique object satisfying certain
properties.
We start with a construction of the partition, then state this abstract definition,
then describe some symmetries of the decomposition.
This is where the definition is useful: to show that certain operations
preserve the decomposition, it suffices to verify the abstract properties in the definition.

\subsection{Partition into cycles}
Suppose that $\Pi$ is nonempty.
Since $\partial\n(\Pi)=0$, the in-degree and out-degree of each fixed vertex 
in $V(\Pi)$ is the same and positive.
Let $M(\Pi)\subset\Pi$ denote the set containing for each vertex $x\in V(\Pi)$
the outgoing Poisson edge with the maximal time.
Observe that each such Poisson edge points towards another vertex in $V(\Pi)$.
Thus, $(V(\Pi),M(\Pi))$ is a finite directed graph with times on the edges,
and such that each vertex has out-degree one.
The edge set of $(V(\Pi),M(\Pi))$ has a natural decomposition into a positive finite number of cycles
and a number of remainder edges which each eventually flow towards one such cycle.
Write $C^1(\Pi)\subset \CP(\Pi)$ for this set of cycles,
which are pairwise disjoint,
and write $\cup C^1(\Pi)\subset\Pi$ for its union.
Observe that $\partial\n(\cup C^1(\Pi))=0$
and therefore $\partial\n(\Pi\setminus \cup C^1(\Pi))=0$
because $\cup C^1(\Pi)$ is a disjoint union of cycles.
For consistency we also define $C^1(\varnothing):=\varnothing$.
The set $C^1(\Pi)$ may be thought of informally as containing the cycles
of $\Pi$ in the topmost layer.
We iterate this construction by defining
\[
    C^{k+1}(\Pi):=C^k(\Pi)\cup C^1(\Pi\setminus\cup C^k(\Pi))
    \qquad
    \forall k\in\Z_{\geq 1}.
\]
The set $C^{k}(\Pi)\setminus C^{k-1}(\Pi)$ may be thought of informally
as being the $k$-th layer of cycles.
Observe that the sequence $(C^k)_{k\in\Z_{\geq 1}}$ is increasing in $k$
and that $C^k(\Pi)$ is a partition of $\Pi$ for $k$ large enough.
Write $C(\Pi):=\lim_{k\to\infty}C^k(\Pi)$ for this partition.

\subsection{Activation time and partial ordering structure}
If $\eta$ is a cycle and if $x\in V(\eta)$,
then write $a_x(\eta)$ for the time on the unique edge in $\eta$
pointing out of $x$.
This is called the \emph{activation time} of $x$.

\begin{definition}
    For any partition $P$ of $\Pi$ into cycles,
    we define the relation $\preceq$ on $P$
    such that
    $\eta\preceq\eta'$ if and only if 
    there exists a sequence of cycles $(\eta^k)_{0\leq k\leq n}\subset P$
from $\eta^0=\eta$ to $\eta^n=\eta'$
such that 
\[
    \forall 0\leq k<n,\quad
    \exists x\in V(\eta_k)\cap V(\eta_{k+1}),
    \quad
    a_x(\eta_k)\leq a_x(\eta_{k+1}).
\]
We write $\preceq_P$ for $\preceq$ when considering multiple partitions
in order to avoid confusion.
We call $P$ a \emph{proper cycle partition} of $\Pi$ if and only if
the relation $\preceq$ is a partial order.
\end{definition}

\begin{proposition}
    \label{pro:unique_pcp}
    The partition $C(\Pi)$ is the unique proper cycle partition of $\Pi$.
\end{proposition}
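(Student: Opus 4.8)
The statement has two halves: existence and uniqueness of a proper cycle partition. The existence half is essentially already done — the plan is to show that the explicitly constructed partition $C(\Pi)$ is proper, i.e.\ that $\preceq_{C(\Pi)}$ is a partial order. First I would set up the key invariant of the construction: if $\eta$ belongs to the $k$-th layer (meaning $\eta\in C^k(\Pi)\setminus C^{k-1}(\Pi)$), then for every vertex $x\in V(\eta)$ and every Poisson edge $e\in\Pi$ pointing out of $x$ that lies in layer $\geq k$, we have $a_x(\eta)\geq \mathrm{time}(e)$; in words, within each vertex the activation times are \emph{decreasing} in the layer index, because $C^1$ always skims off the maximal-time outgoing edge at each vertex of the current graph. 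Granting this, suppose $\eta\preceq\eta'$ via a chain $(\eta^0,\dots,\eta^n)$. At each step $a_x(\eta^j)\le a_x(\eta^{j+1})$ at a shared vertex $x$, and by the invariant a smaller activation time at $x$ means a layer index that is $\geq$ that of the larger one; so the layer index is non-increasing along the chain. If also $\eta'\preceq\eta$, concatenating the two chains gives a closed chain along which the layer index is non-increasing, hence constant, which forces all the inequalities $a_x(\eta^j)\le a_x(\eta^{j+1})$ to be equalities — impossible since all Poisson times are distinct and distinct cycles sharing a vertex have distinct activation times there, unless $\eta=\eta'$. (A little care is needed: I should phrase this so that within a single layer two distinct cycles never share a vertex — which is clear since $C^1(\cdot)$ produces pairwise disjoint cycles — so "constant layer along a closed chain" already forces $\eta=\eta'$.) This establishes antisymmetry; reflexivity and transitivity of $\preceq$ are immediate from the definition, so $\preceq_{C(\Pi)}$ is a partial order and $C(\Pi)$ is proper.

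For uniqueness, let $P$ be any proper cycle partition of $\Pi$; the goal is $P=C(\Pi)$. I would argue that $P$ must contain $C^1(\Pi)$, and then induct. Fix a vertex $x\in V(\Pi)$ and let $e$ be its maximal-time outgoing Poisson edge, so $e\in M(\Pi)$; let $\eta_P(x)\in P$ be the cycle of $P$ containing $e$. The claim is that following the $M(\Pi)$-successor structure from $x$ stays inside $\eta_P(x)$: indeed if $e$ goes $x\to y$, then inside the cycle $\eta_P(x)$ there is an outgoing edge $f$ at $y$; I claim $f$ is the $M(\Pi)$-edge at $y$, for otherwise the true maximal edge $e'$ at $y$ lies in some other cycle $\eta'\in P$ with $a_y(\eta')>a_y(\eta_P(x))$, giving $\eta_P(x)\preceq\eta'$; and chasing around $\eta_P(x)$ back to $x$ and comparing activation times there (each vertex of a cycle has exactly one outgoing edge in it) one produces a relation $\eta'\preceq\eta_P(x)$ as well — contradicting antisymmetry unless $\eta'=\eta_P(x)$. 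Hence every $M(\Pi)$-cycle through $x$ is a union of $P$-cycles, but an $M(\Pi)$-cycle visits each of its vertices once and a $P$-cycle is nontrivial, so it must be a single $P$-cycle; thus $C^1(\Pi)\subseteq P$. Since $\preceq_P$ is a partial order, its restriction to $C^1(\Pi)$ being an order is automatic, and — here is the point — the cycles of $C^1(\Pi)$ are exactly the $\preceq_P$-minimal elements (their vertices have the globally maximal activation times, so nothing can sit below them; and conversely any $P$-cycle not in $C^1(\Pi)$ has at some vertex $x$ a non-maximal outgoing edge, hence lies above the $C^1$-cycle through that $x$). Removing them, $P\setminus C^1(\Pi)$ is a proper cycle partition of $\Pi\setminus\cup C^1(\Pi)$, so by induction on $|\Pi|$ it equals $C^1(\Pi\setminus\cup C^1(\Pi))\cup\cdots = C(\Pi\setminus\cup C^1(\Pi))$, and therefore $P=C(\Pi)$.

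The step I expect to be the main obstacle is the antisymmetry argument on both sides — cleanly relating the combinatorial layer index to the activation-time ordering, and making the "closed chain forces equality" deduction airtight. In particular I want to be careful about the case where a chain revisits the same cycle, and about the fact that $\preceq$ is defined via \emph{some} intermediate shared vertex at each step, not a uniform one; the invariant "activation times decrease with layer at each fixed vertex" is exactly what controls this, so most of the work is in stating and proving that invariant precisely from the iterative definition of $C^k$. Everything else (reflexivity, transitivity, the inductive bookkeeping for uniqueness) is routine.
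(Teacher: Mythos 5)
Your argument that $\preceq_{C(\Pi)}$ is a partial order is correct; it supplies exactly the detail the paper compresses into ``it is easy to see that $\eta'$ must appear in a strictly lower layer.'' The invariant you formulate (at each fixed vertex, activation time strictly decreases with the layer index, because $C^1$ always removes the top remaining outgoing edge) is precisely what forces a closed $\preceq$-chain to stay within a single layer, whence it must be constant and collapse to a single cycle. That half is fine.

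The uniqueness half has a genuine gap. You try to prove $C^1(\Pi)\subseteq P$ by a local chase: if $\eta_P(x)$ contains the maximal outgoing edge at $x$ (pointing to $y$) but not the maximal outgoing edge at $y$, you correctly obtain $\eta_P(x)\preceq_P\eta'$, where $\eta'$ is the $P$-cycle through the maximal edge at $y$, and then assert $\eta'\preceq_P\eta_P(x)$ ``by chasing around $\eta_P(x)$ back to $x$ and comparing activation times there.'' This step does not go through: $\eta'$ need not visit $x$, nor any other vertex of $\eta_P(x)$ besides $y$, so nothing forces a relation pointing the other way. In fact the intermediate claim you are building toward --- that following the $M(\Pi)$-successor structure from an arbitrary $x$ stays inside $\eta_P(x)$ --- is simply false whenever $x$ does not lie on an $M(\Pi)$-cycle, since the $M(\Pi)$-walk from such an $x$ enters a cycle without returning to $x$ and therefore cannot be contained in any single cycle; so no purely local step-by-step contradiction can establish it. (There is also a minor reversal: $C^1(\Pi)$-cycles are $\preceq_P$-\emph{maximal}, not minimal, since larger activation time means $\preceq$-larger.)

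The repair is to chase around the $M(\Pi)$-cycle $\zeta$, not around $\eta_P(x)$. Fix $\zeta\in C^1(\Pi)$ with vertices $v_1\to v_2\to\cdots\to v_k\to v_1$, and let $\eta_i\in P$ be the $P$-cycle containing the $\zeta$-edge $v_i\to v_{i+1}$ (indices mod $k$). This edge is maximal at $v_i$, and both $\eta_i$ and $\eta_{i+1}$ visit $v_{i+1}$, so $a_{v_{i+1}}(\eta_i)\leq a_{v_{i+1}}(\eta_{i+1})$, giving the closed $\preceq_P$-chain $\eta_1\preceq_P\eta_2\preceq_P\cdots\preceq_P\eta_k\preceq_P\eta_1$; antisymmetry collapses it, so all $\eta_i$ coincide, hence $\zeta\subseteq\eta_1$, and since both are cycles, $\zeta=\eta_1\in P$. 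The contradiction is global, coming from the closed chain around $\zeta$, not a two-way relation at a single vertex. Note also that the paper sidesteps this lemma entirely: it extracts a single $\preceq_P$-maximal $\eta\in P$, observes that maximality forces every outgoing edge of $\eta$ to be maximal so that $\eta\subset M(\Pi)$ and $\eta\in C^1(\Pi)$, then removes $\eta$ and inducts --- a lighter route than establishing $C^1(\Pi)\subseteq P$ all at once.
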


\begin{proof}
    We first check that $\preceq$ is a partial order on $C(\Pi)$.
    It suffices to rule out the existence of distinct cycles $\eta,\eta'\in C(\Pi)$ 
    such that simultaneously $\eta\preceq\eta'$ and $\eta'\preceq\eta$.
    If $\eta\preceq\eta'$ then it is easy to see that $\eta'$ must appear
    in a strictly lower layer in the decomposition algorithm.
    This contradicts that $\eta'\preceq\eta$.

    Let $P$ denote any proper cycle partition of $\Pi$.
    We claim that $P=C(\Pi)$.
    We induct on $|\Pi|$.
    The case $\Pi=\varnothing$ is trivial and we focus on the remainder.
    Let $\eta\in P$ denote a cycle which is $\preceq_P$-maximal.
    Such a cycle must exist since $\Pi$ and $P$ are finite.
    By definition of $\preceq_P$ it is immediate that $\eta\subset M(\Pi)$.
    But then $\eta$ is a cycle of $(V(\Pi),M(\Pi))$,
    and therefore $\eta\in C(\Pi)$.
    By induction,
    $\Pi\setminus\eta$ has a unique proper cycle partition,
    namely $C(\Pi\setminus\eta)$.
    In particular,
    \(
        P\setminus\{\eta\}=C(\Pi)\setminus\{\eta\}=C(\Pi\setminus\eta)\).
    This proves the claim.
\end{proof}

\subsection{Symmetries of the Poisson process and its cycle partition}

Let $\tilde\Pi$ denote the \emph{time inversion} of $\Pi$,
defined by 
\[
    \tilde\Pi:=f(\Pi);
    \qquad
    f(xy,\tau):=(xy,T_x-\tau).    
\]

\begin{proposition}
    \label{pro:time_inversion}
    \begin{enumerate}
        \item Time inversion commutes with the decomposition, that is,
            \[
                C(\tilde\Pi)=\{\tilde\eta:\eta\in C(\Pi)\}.    
            \]
        \item The set $\Pi$ and its time inverse $\tilde\Pi$ have the same
        distribution in $\P_{G,T}$.
    \end{enumerate}
\end{proposition}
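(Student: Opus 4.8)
The plan is to prove part~(1) first and then derive part~(2) from it together with a separate symmetry argument.

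For part~(1), I would use the abstract characterisation from Proposition~\ref{pro:unique_pcp}: $C(\tilde\Pi)$ is the \emph{unique} proper cycle partition of $\tilde\Pi$, so it suffices to show that the candidate partition $\{\tilde\eta : \eta\in C(\Pi)\}$ is a proper cycle partition of $\tilde\Pi$. First I would check it is a cycle partition at all: the map $f(xy,\tau):=(xy,T_x-\tau)$ acts on each Poisson edge by changing only its time, leaving the underlying directed edge untouched; hence for a cycle $\eta\subset\Pi$ the image $\tilde\eta=f(\eta)$ has the same vertex set $V(\tilde\eta)=V(\eta)$ and the same in/out-degree-one structure, so $\tilde\eta$ is a cycle, and $\{\tilde\eta\}$ partitions $\tilde\Pi=f(\Pi)$ since $f$ is a bijection on Poisson edges. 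The key point is then the interaction with the activation times. For a cycle $\eta$ and $x\in V(\eta)$, the activation time in $\tilde\Pi$ is $a_x(\tilde\eta)=T_x-a_x(\eta)$, because the unique out-edge of $x$ in $\tilde\eta$ is the $f$-image of the unique out-edge of $x$ in $\eta$. Therefore for cycles $\eta,\eta'$ sharing a vertex $x$, the inequality $a_x(\tilde\eta)\le a_x(\tilde\eta')$ is equivalent to $a_x(\eta)\ge a_x(\eta')$, so the relation $\preceq$ on the image partition is exactly the \emph{reverse} of the relation $\preceq$ on $C(\Pi)$. Since the reverse of a partial order is a partial order, the image partition is proper, and by uniqueness it equals $C(\tilde\Pi)$. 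This establishes part~(1).

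For part~(2), the statement is that $\Pi$ and $\tilde\Pi$ have the same law under $\P_{G,T}=\M_{G,T}[\,\cdot\,|\partial\n=0]$. I would argue on the level of the non-normalised measure $\M_{G,T}$ first: the conditioning event $\{\partial\n=0\}$ is preserved by $f$ because $f$ does not change the underlying current $\n$, so it is enough to show that $\M_{G,T}$ itself is invariant under $f$. This follows from the structure of the Poisson point process: on each fibre $\{xy\}\times[0,\infty)$ the process restricted to $[0,T_x]$ is a unit-rate Poisson process, and the reflection $\tau\mapsto T_x-\tau$ preserves Lebesgue measure on $[0,T_x]$, hence preserves the law of the restricted Poisson process; on $(T_x,\infty)$ there are no points either before or after (and $f$ only ever produces times in $[0,T_x]$ when applied to points in $[0,T_x]$). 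The prefactor $e^{\|T\|}$ is a constant and is unaffected. Applying this fibrewise and using independence across directed edges gives $f_*\M_{G,T}=\M_{G,T}$, and conditioning on the $f$-invariant event $\{\partial\n=0\}$ yields $f_*\P_{G,T}=\P_{G,T}$, which is the claim.

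The main obstacle, such as it is, is bookkeeping the activation-time identity $a_x(\tilde\eta)=T_x-a_x(\eta)$ carefully and confirming that it reverses $\preceq$ rather than doing something subtler; once that is pinned down, part~(1) is immediate from uniqueness and part~(2) is a routine measure-preserving-transformation argument. One should also make sure the almost-sure distinctness of times (assumed throughout this section) is preserved under $f$, which it is since $\tau\mapsto T_x-\tau$ is injective on each fibre and distinct directed edges keep their points on distinct fibres.
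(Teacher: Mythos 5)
Your proposal is correct and follows essentially the same route as the paper: for part~(1), you verify that the image partition is a proper cycle partition by observing that time inversion reverses the $\preceq$ relation (via the identity $a_x(\tilde\eta)=T_x-a_x(\eta)$) and then invoke uniqueness from Proposition~\ref{pro:unique_pcp}; for part~(2), you note that $f$ preserves the intensity measure of the Poisson process fibrewise and that the conditioning event $\{\partial\n=0\}$ is $f$-invariant. The paper states both steps more tersely, but the underlying argument is identical.
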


\begin{proof}
    Write $P:=\{\tilde\eta:\eta\in C(\Pi)\}$,
    which is a partition of $\tilde\Pi$.
At each vertex, $f$ inverts the ordering of the times of
the outgoing edges, which means that for any $\eta,\eta'\in C(\Pi)$
we have
\[
    \tilde\eta \preceq_P \tilde\eta'
    \iff
    \eta' \preceq_{C(\Pi)}\eta.
\]
This proves that $\preceq_{P}$ is a partial order on
$P$, which implies the first statement thanks to Proposition~\ref{pro:unique_pcp}.
    The function $f$ preserves the intensity measure
of the Poisson point process on $\vec E\times[0,\infty)$,
which implies the second statement.
\end{proof}

Define the \emph{directional inverse} $\eta^{-1}$
of any cycle $\eta$ by
\[
    \eta^{-1}:=\{(yx,a_y(\eta)):(xy,\tau)\in\eta\}\subset\vec E\times[0,\infty).
\]
In other words, $\eta^{-1}$ is obtained from $\eta$ by changing the direction of all directed
edges, and changing the times on the directed edges so that the activation times
of the two loops are the same.

\begin{proposition}
    \label{propo:flips}
    \begin{enumerate}
        \item Direction inversion commutes with the decomposition, that is,
        \[
            C(
                (\Pi\setminus\cup S)
                \cup
                (\cup_{\eta\in S}\eta^{-1})
                )
            =
            (C(\Pi)\setminus S)\cup \{\eta^{-1}:\eta\in S\}
        \]
        for all $S\subset C(\Pi)$.
        \item The law of $\Pi$ is invariant under flipping a fair independent coin
        for each cycle $\eta\in C(\Pi)$ in order to decide if
        $\eta$ should be replaced by $\eta^{-1}$ or not.
    \end{enumerate}
\end{proposition}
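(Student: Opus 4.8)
The plan is to imitate the proof of Proposition~\ref{pro:time_inversion}: part~(1) will be a consequence of the uniqueness of proper cycle partitions (Proposition~\ref{pro:unique_pcp}), and part~(2) will follow from a symmetry of the Poisson point process, with one extra step needed because the operation is randomised.

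For part~(1), set $\Pi':=(\Pi\setminus\cup S)\cup(\cup_{\eta\in S}\eta^{-1})$ and $P:=(C(\Pi)\setminus S)\cup\{\eta^{-1}:\eta\in S\}$. The single fact I would isolate is that directional inversion preserves the vertex set and every activation time of a cycle, namely $V(\eta^{-1})=V(\eta)$ and $a_x(\eta^{-1})=a_x(\eta)$ for all $x\in V(\eta)$; this is precisely what the definition of $\eta^{-1}$ arranges. Because $\cup S$ is a disjoint union of cycles we have $\partial\n(\Pi')=0$, the times of $\Pi'$ remain distinct, and (routinely, using distinctness of times) $P$ is a partition of $\Pi'$ into cycles. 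The bijection $\psi: C(\Pi)\to P$ that fixes the cycles outside $S$ and sends $\eta\mapsto\eta^{-1}$ on $S$ preserves vertex sets and activation times, hence, directly from the definition of $\preceq$, satisfies $\eta\preceq_{C(\Pi)}\eta'\iff\psi(\eta)\preceq_P\psi(\eta')$. Thus $\preceq_P$ is a partial order, so $P$ is a proper cycle partition of $\Pi'$, and Proposition~\ref{pro:unique_pcp} forces $P=C(\Pi')$, which is the asserted identity.

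For part~(2), I would work on the space of pairs $(\Pi,S)$ with $S\subset C(\Pi)$, equipped with the measure $\mu$ under which $\Pi\sim\P_{G,T}$ and $S$ is then uniform among the subsets of $C(\Pi)$. By part~(1) the map $\Theta:(\Pi,S)\mapsto\big(\Pi^S,\{\eta^{-1}:\eta\in S\}\big)$, where $\Pi^S:=(\Pi\setminus\cup S)\cup(\cup_{\eta\in S}\eta^{-1})$, is a well-defined involution (using $(\eta^{-1})^{-1}=\eta$) satisfying $|C(\Pi^S)|=|C(\Pi)|$, and the coin-flipped configuration is the image of $(\Pi,S)$ under $\Theta$ composed with the first projection $(\Pi,S)\mapsto\Pi$. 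Since the first projection already has pushforward $\P_{G,T}$ under $\mu$, it suffices to show $\Theta_*\mu=\mu$. Now directional inversion of any union of cycles leaves the \emph{source multiset} $A(\Pi):=\{(x,\tau):(xy,\tau)\in\Pi\text{ for some }y\}$ unchanged, since each vertex of a reversed cycle keeps a single outgoing Poisson edge at the same time; hence $\Theta$ preserves $A(\Pi)$. Conditionally on $A(\Pi)=A$, the law of $\Pi$ under $\P_{G,T}$ is uniform over the finite set of admissible orientations of $A$ — because in the Poisson process each point picks its direction uniformly and independently, and conditioning on $\{\partial\n=0\}$ merely restricts to the admissible orientations — so on each such finite fibre $\Theta$ is an involution that multiplies the weight of $(\Pi,S)$ by $2^{|C(\Pi)|-|C(\Pi^S)|}=1$; it therefore preserves the conditional law on every fibre, and so $\Theta_*\mu=\mu$.

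The step I expect to cost the most care is the measure-theoretic input to part~(2): pinning down that the source multiset is the correct object to condition on and that the conditional law of $\Pi$ is genuinely uniform over the finitely many admissible orientations — which amounts to the fact that reversing a cycle preserves both $\partial\n$ and the elementary support constraint $\prod_{(xy,\tau)\in\Pi}\mathbf 1[\tau\le T_x]$. Granting this, part~(2) reduces to a finite counting statement, and part~(1) is essentially formal.
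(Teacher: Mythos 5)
Your proof is correct. Part~(1) follows the paper's route exactly: directional inversion preserves vertex sets and activation times, hence preserves $\preceq$, and Proposition~\ref{pro:unique_pcp} does the rest; you treat a general $S$ in one stroke where the paper reduces to $|S|=1$, a harmless cosmetic difference. For part~(2) the paper simply asserts that $\Pi\mapsto\Pi_A$ ``preserves the densities in the Poisson process'' for any deterministic selection algorithm $A$ acting on $\bar C(\Pi)$ and then randomises; you instead introduce the measure $\mu$ on pairs $(\Pi,S)$, exhibit the involution $\Theta$, and verify $\Theta_*\mu=\mu$ by conditioning on the source multiset $A(\Pi)=\{(x,\tau):(xy,\tau)\in\Pi\text{ for some }y\}$, using that the conditional law of $\Pi$ given $A(\Pi)$ is uniform over the finitely many orientations with $\partial\n=0$, that cycle reversal preserves $A(\Pi)$ and the support constraint $\tau\leq T_x$ (since $a_y(\eta)$ is already an outgoing time at $y$ in $\Pi$), and that $|C(\Pi^S)|=|C(\Pi)|$ by part~(1). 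This is a genuine unpacking of the single step the paper leaves implicit, and it makes the measure-theoretic content of ``density preservation'' completely transparent. The paper's formulation is terser and, by allowing an arbitrary deterministic algorithm $A$, states a marginally more general symmetry, but nothing is lost since the only instance used is the randomised one you treat. Both arguments rest on the same underlying invariance, so either proof can stand in place of the other.
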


\begin{proof}
    Write $\bar\eta:=\{\eta,\eta^{-1}\}$ throughout this proof,
    and use $\Delta$ for the symmetric difference of two sets. 
    For the first statement, it suffices to consider
    the case that $|S|=1$, say $S=\{\eta\}$.
    Since direction inversion preserves the set of vertices visited
    and the activation time of each vertex, it is immediate
    that $\preceq$ is the same on $C(\Pi)$
    and $P:=C(\Pi)\Delta\bar\eta$
    once we identify $\eta$ and $\eta^{-1}$.
    In particular, $\preceq$ is a partial order on $P$,
    which proves that this is the unique proper cycle partition
    of its union because of Proposition~\ref{pro:unique_pcp}.
    This proves the first statement.

    Write $\bar C(\Pi):=\{\bar\eta:\eta\in C(\Pi),\,\eta\neq\eta^{-1}\}$
    (the condition is necessary for technical reasons; a cycle can be equal to its directional inverse
    if it has cardinality two).
    Let $A$ denote any deterministic measurable algorithm which receives as input
    $\bar C(\Pi)$ and which outputs a subset of $\bar C(\Pi)$.
    Let $\Pi_A$ denote the symmetric difference of $\Pi$ with the set of Poisson edges
    appearing in $A(\bar C(\Pi))$.
    Then $\Pi\mapsto\Pi_A$ is a map which preserves the densities in the Poisson process,
    that is, $\Pi$ and $\Pi_A$ have the same distribution.
    This is still true if $A$ is the random algorithm which selects a subset of $\bar C(\Pi)$
    uniformly at random.
    The latter observation is equivalent to the second statement in the lemma.
\end{proof}

\subsection{Relation to exploration processes}

We now make a link between the explorations discussed in Section~\ref{sec:twopoint}
and the cycles in $C(\Pi)$.

\begin{proposition}
    \label{pro:loop_decomp_lemma}
    Fix a set of Poisson edges $\Pi$ and some vertex $x\in V$.
    Let $\omega\subset\Gamma_{xx}$ denote the random walk which,
    to determine the next step, picks the outgoing edge with the largest
    time that has not been used before, until it is stopped at some point when
    it is at $x$.
    Let $\eta\in C(\Pi)$ denote the cycle containing
    the last outgoing Poisson edge from $x$
    that $\omega$ uses.
    Then the set of Poisson edges $P\subset\Pi$ used by $\omega$
    is precisely the union of the set $H:=\{\eta'\in C(\Pi):\eta\preceq\eta'\}$.
\end{proposition}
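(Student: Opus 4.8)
The plan is to run the exploration algorithm described in the statement and track, step by step, which cycles of $C(\Pi)$ it has consumed, maintaining the invariant that the set of Poisson edges used so far is always a union of an up-set of $(C(\Pi),\preceq)$ (or rather: a union of complete cycles, forming a down-closed collection relative to the exploration order). First I would recall the structure: at each vertex $x$ in the support, the outgoing edge with the largest time lies in $M(\Pi)$, and the edges of $M(\Pi)$ decompose into the top-layer cycles $C^1(\Pi)$ plus tails flowing into them; the activation time $a_x(\eta)$ records the time on the outgoing edge of $\eta$ at $x$. The key observation is that when the walk $\omega$ is at a vertex $x$ and picks the unused outgoing edge of maximal time, it picks precisely the outgoing edge at $x$ of the $\preceq$-largest cycle through $x$ that has not yet been fully used — because among all cycles through $x$, the activation times at $x$ are totally ordered, and larger activation time at $x$ forces the cycle to sit higher (or incomparable but consumed later) in the layering.

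The main steps, in order: (1) Set up the invariant. After any number of steps of $\omega$, let $P_t$ be the set of used Poisson edges. I claim $P_t$ is always a union of cycles in $C(\Pi)$, and moreover the collection $\mathcal{S}_t := \{\eta \in C(\Pi) : \eta \subseteq P_t\}$ is such that whenever $\eta \in \mathcal{S}_t$ and $\eta \preceq \eta'$, either $\eta' \in \mathcal{S}_t$ or $\eta'$ is the cycle currently "in progress." Prove this by induction on $t$. (2) The inductive step: when $\omega$ sits at $x$ having just closed up cycles in $\mathcal{S}_t$ and is still at $x$ (not yet stopped), it must use the outgoing edge at $x$ of maximal time among unused ones; argue that this edge belongs to the unique $\preceq$-maximal not-yet-started cycle through $x$ — this uses that the $a_x(\eta)$ for $\eta \ni x$ are distinct and that $a_x(\eta) > a_x(\eta')$ implies $\eta' \not\succeq \eta$ in the layer order, via the construction of $C^1$ and the iteration. (3) Once the walk enters a new cycle $\eta^{\mathrm{new}}$, it is forced to traverse $\eta^{\mathrm{new}}$ entirely before returning to $x$ with a usable edge: at each intermediate vertex $v \in V(\eta^{\mathrm{new}})$, the edge of $\eta^{\mathrm{new}}$ out of $v$ has the largest activation time among unused outgoing edges at $v$ — again because any cycle $\eta''$ through $v$ with $a_v(\eta'') > a_v(\eta^{\mathrm{new}})$ would satisfy $\eta^{\mathrm{new}} \preceq \eta''$, hence would have been started (and closed) already. (4) Termination: the walk stops at $x$ exactly when every cycle $\eta'$ with $\eta \preceq \eta'$ has been fully traversed and no usable outgoing edge remains at $x$; here $\eta$ is, by definition in the statement, the cycle containing the last outgoing edge from $x$ that $\omega$ uses — i.e., the $\preceq$-minimal among the cycles $\omega$ visits through $x$. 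Conclude $P = \bigcup H$ with $H = \{\eta' : \eta \preceq \eta'\}$.

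I expect the main obstacle to be step (2)–(3): cleanly proving the equivalence "$a_v(\eta_1) \leq a_v(\eta_2)$ at a shared vertex $v$ $\iff$ $\eta_1$ is consumed no later than $\eta_2$ by the exploration," i.e., that the exploration order at each vertex is governed exactly by the activation-time order, and reconciling this with the global partial order $\preceq$ (which is generated by such local comparisons but a priori could differ). The safe route is to avoid reasoning directly about the layering algorithm and instead use Proposition~\ref{pro:unique_pcp}: show that the partition of $\bigcup H$ induced by the cycles the exploration traverses is a proper cycle partition — that $\preceq$ restricted to it is a partial order, which it is since it is the restriction of the partial order on $C(\Pi)$ — hence equals $C(\bigcup H)$, and separately verify by a direct "top-down" argument that the exploration consumes exactly an up-set, using that $\omega$ never crosses an edge with smaller time when a larger-time unused edge is available at the same vertex. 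A careful bookkeeping of "the walk at $x$ always re-enters the $\preceq$-largest remaining cycle through $x$" should close the argument; the delicate point is handling vertices shared by several cycles in $H$ and ensuring the walk does not escape $H$ prematurely, which follows because leaving $H$ at $v$ would require using an outgoing edge at $v$ of time smaller than that of some unused edge of a cycle in $H$ through $v$ — impossible by the greedy rule.
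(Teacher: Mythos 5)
Your step (3) contains a genuine error. You claim that once the walk enters a cycle $\eta^{\mathrm{new}}$, at each intermediate vertex $v \in V(\eta^{\mathrm{new}})$ the $\eta^{\mathrm{new}}$-edge out of $v$ is the largest-time unused edge, because any cycle $\eta''$ through $v$ with $a_v(\eta'') > a_v(\eta^{\mathrm{new}})$ has $\eta^{\mathrm{new}} \preceq \eta''$ and ``would have been started (and closed) already.'' But your invariant only asserts that the already-closed cycles form an up-set; since $\eta^{\mathrm{new}}$ is itself still open, this says nothing about cycles strictly above it. Concretely: take a cycle $\eta_1$ through $x$ and some other vertex $a$, and a cycle $\eta_2$ through $a$ but not through $x$, with $a_a(\eta_2) > a_a(\eta_1)$, so $\eta_1 \preceq \eta_2$. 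The walk leaves $x$ along $\eta_1$, reaches $a$, and the greedy rule immediately diverts it onto $\eta_2$ while $\eta_1$ is still open. The exploration \emph{interleaves} cycles rather than traversing them one at a time, so the bookkeeping in steps (2)--(3) cannot maintain ``the current cycle wins at every shared vertex.'' Your fallback via Proposition~\ref{pro:unique_pcp} is closer in spirit, but it presupposes that the used edge set comes pre-decomposed into cleanly traversed cycles, which is precisely what must be established.

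The paper avoids the step-by-step narrative entirely and proves the two inclusions by different devices. For $P\subseteq\cup H$: take the \emph{first} edge $(yz,\tau)$ used outside $\cup H$; it must have $y\neq x$ (at $x$, the greedy rule forces every used outgoing edge into a cycle $\succeq\eta$); counting the number $n$ of visits to $y$ up to that moment, each incoming edge used to reach $y$ lies in a distinct cycle of $\cup H$, and since cycles through a fixed vertex form a $\preceq$-chain and $H$ is upward closed, $H$ contains the $n$ cycles through $y$ with highest activation time there — but $(yz,\tau)$ is the $n$-th-highest outgoing edge at $y$ and so belongs to exactly the $n$-th such cycle, a contradiction. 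For $\cup H\subseteq P$: the key observation is that $P=\Pi_{k(\omega^*)}$ belongs to the family of ``top-heavy'' sets $\Pi_k$ (keeping the $k_v$ highest-time outgoing edges at each $v$), and a short induction on $|\Pi|$ shows that whenever $\Pi_k$ is balanced, $C(\Pi_k)\subseteq C(\Pi)$ and is closed under passing to $\preceq$-larger cycles; since $\eta\in C(P)$ and $H$ is the smallest such up-set containing $\eta$, the inclusion follows. This balanced-subset lemma is the idea your proposal is missing, and without it the ``does not escape $\cup H$'' and ``covers all of $\cup H$'' claims remain exactly as delicate as you feared.
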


\begin{proof}
    First prove that $P\subset\cup H$.
    Suppose that this is false, and let $(yz,\tau)$
    denote the first edge in $P\setminus \cup H$ that $\omega$ uses
    in order to derive a contradiction.
    Clearly $y\neq x$ since the walk always picks the highest unused edge,
    and therefore the cycle of $(yz,\tau)$ would automatically
    be in $H$ if $y=x$.
    Let $n\geq 1$ denote the number of visits of $\omega$ to $y$ before using
    $(yz,\tau)$, including the last visit.
    By definition of $(yz,\tau)$ all Poisson edges used before it are in $\cup H$,
    and therefore $H$ contains at least $n$ cycles passing through $y$
    (namely at least one for each \emph{incoming} edge used to get to $y$).
    By definition of $H$, it contains at least the $n$ cycles with the highest
    activation time at $y$. But since the walk chooses Poisson edges
    with higher times first, the edge $(yz,\tau)$ belongs to the cycle
    with the $n$-th highest activation time at $y$. Thus, this cycle belongs to $H$,
    a contradiction.

    For any function $k:V\to\Z_{\geq 0}$, let $\Pi_k\subset \Pi$
    denote the set of Poisson edges which contains for each $x\in V$
    the $k_x$ outgoing Poisson edges from $x$ with the highest time
    (we only consider functions $k$ not exceeding the out-degree at each vertex).
    We call $\Pi_k$ \emph{balanced} if $\partial\n(\Pi_k)=0$.
    For example, we have $P=\Pi_{k(\omega^*)}$, and this set is balanced.
    Claim that if $\Pi_k$ is balanced, then $C(\Pi_k)\subset C(\Pi)$.
    We prove the claim by inducting on $|\Pi|$.
    Fix the function $k$.
    Recall the definition of $M(\Pi)$ and $C^1(\Pi)$
    from the first subsection.
    By definition of $\Pi_k$,
    we have $M(\Pi_k)\subset M(\Pi)$
    and therefore $C^1(\Pi_k)\subset C^1(\Pi)$.
    Write $\Pi':=\Pi\setminus\cup C^1(\Pi_k)$,
    let $W$ denote the set of vertices traversed by some cycle in $C^1(\Pi_k)$,
    and let $k':=k-1_W$.
    Then $\Pi_k\setminus \cup C^1(\Pi_k)=\Pi'_{k'}$. By induction we have
    \[
        C(\Pi'_{k'})\subset C(\Pi')\subset C(\Pi).
    \]
    Concludes that
    \(
        C(\Pi_k)=C^1(\Pi_k)\cup   C(\Pi'_{k'})  \subset C(\Pi)\),
    which proves the claim.
    Observe that $C(\Pi_k)$ is also closed under taking $\preceq_{C(\Pi)}$-higher
    cycles because the definition of $\Pi_k$ is such that edges with higher
    times are selected first.
    It is now clear that $\cup H\subset P$ since $H$ is the smallest
    set of cycles containing $\eta$ and which is closed under taking higher loops.
    This proves the proposition.
\end{proof}

\subsection{Relation to the covariance matrix}
Now specialise to the square lattice.
By planarity, each self-avoiding cycle $\eta$
has a well-defined \emph{orientation} (clockwise or counterclockwise)
and \emph{interior}.
Each face lies either within or outside the interior of $\eta$.
From now on we occasionally use the notation $\#A:=|A|$
for the cardinality of some set $A$.

\begin{lemma}
    \label{lemma:cov}
    For any $\beta\in[0,\infty)$, $n\in\Z_{\geq 0}$,
    and $a,b\in\Z^2$,
    we have
    \[
        \Cov_{n,\beta}[a,b]
        =
        \E_{n,\beta/2}[
            \#\{
                \eta\in C(\Pi):\text{$\eta$ surrounds both $\F_a$ and $\F_b$}    
            \}    
        ] .
    \]
\end{lemma}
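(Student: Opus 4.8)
The plan is to express the height function $h(\F_a)$ as a signed count of Poisson edges crossing a dual path, relate this to a signed count of cycles surrounding $\F_a$, and then compute the covariance cycle-by-cycle using the symmetry of Proposition~\ref{propo:flips}. First I would recall that for a sourceless current $\n(\Pi)$, the height difference $h(\F_a)-h(\F_\infty)$ equals the net flux of $\n$ across any dual path from the outer face to $\F_a$; concretely, fixing a dual path $\gamma_a$ from $\F_\infty$ to $\F_a$, we have $h(\F_a)=\sum_{xy\in\gamma_a}(\n_{xy}-\n_{yx})$ where the sum is over the primal edges crossed by $\gamma_a$, with sign according to orientation. Since $\Pi$ decomposes as a disjoint union of the cycles in $C(\Pi)$ (each of which is sourceless), $h(\F_a)=\sum_{\eta\in C(\Pi)}h_\eta(\F_a)$, where $h_\eta(\F_a)$ is the winding number of the (not necessarily self-avoiding) cycle $\eta$ around $\F_a$. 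The key point is that I must be careful: the cycles in $C(\Pi)$ need not be self-avoiding, so "surrounds" and "winding number" need a consistent reading; I expect the intended reading is that a non-self-avoiding $\eta$ contributes its algebraic winding number, and the event "$\eta$ surrounds $\F_a$" in the statement should be understood through the decomposition of $\eta$ itself into self-avoiding loops, or else the lemma is implicitly using that only self-avoiding cycles contribute after the sign-flip symmetry kills everything else. This disambiguation is the first thing to nail down.

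Granting the decomposition $h(\F_a)=\sum_{\eta\in C(\Pi)}w_\eta(a)$ with $w_\eta(a)\in\Z$ the winding number of $\eta$ around $\F_a$, the next step is to expand
\[
    \Cov_{n,\beta}[a,b]
    =
    \E_{n,\beta/2}\left[h(\F_a)h(\F_b)\right]
    =
    \E_{n,\beta/2}\left[
        \sum_{\eta,\eta'\in C(\Pi)}w_\eta(a)\,w_{\eta'}(b)
    \right].
\]
Now invoke Proposition~\ref{propo:flips}(2): conditionally on $C(\Pi)$ as an unoriented collection, each cycle independently gets an orientation from a fair coin, and reversing the orientation of $\eta$ flips the sign of $w_\eta(a)$ and of $w_\eta(b)$ simultaneously. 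Therefore, for $\eta\neq\eta'$, the term $w_\eta(a)w_{\eta'}(b)$ has mean zero after averaging over the coin for $\eta$ (its sign flips while $w_{\eta'}(b)$ is untouched), so all off-diagonal terms vanish. The diagonal terms give $\E_{n,\beta/2}\!\left[\sum_{\eta\in C(\Pi)}w_\eta(a)\,w_\eta(b)\right]$, and $w_\eta(a)w_\eta(b)$ is also invariant under the flip (both signs change), so no further cancellation occurs there. The remaining task is to show $\E_{n,\beta/2}\!\left[\sum_\eta w_\eta(a)w_\eta(b)\right]$ equals the expected number of cycles surrounding both faces, i.e. that one may replace $w_\eta(a)w_\eta(b)$ by $1_{\{\eta\text{ surrounds }\F_a\text{ and }\F_b\}}$.

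For a self-avoiding cycle, $w_\eta(a)\in\{-1,0,+1\}$, so $w_\eta(a)w_\eta(b)\in\{0,1\}$ and equals $1$ exactly when $\eta$ surrounds both faces (here "surrounds" must mean with the same orientation, but for a single self-avoiding loop the winding numbers of two interior faces automatically agree, so $w_\eta(a)w_\eta(b)=1_{\{a,b\in\operatorname{int}\eta\}}$ — this uses planarity and that $\F_\infty$ is exterior, giving winding $0$ outside). So if every $\eta\in C(\Pi)$ were self-avoiding we would be done. The main obstacle, as flagged above, is the non-self-avoiding cycles: I would handle them by applying the orientation-flip argument not to the cycles of $C(\Pi)$ but first decomposing each $\eta$ further into self-avoiding loops via a loop-erasure/planarity argument — or, more cleanly, by observing that for a general closed sourceless $\eta$ on a planar graph, $\sum$ over self-avoiding sub-loops of the signed indicator telescopes to the winding number, and the flip symmetry applies loop-by-loop at that finer level; then "$\eta$ surrounds both $\F_a$ and $\F_b$" in the statement is read as "$w_\eta(a)=w_\eta(b)\neq 0$", which for positive-probability configurations where this product can exceed $1$ still matches once one checks the winding numbers at two faces of the same nested self-avoiding loop agree. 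I would present the self-avoiding reduction carefully and note that, conditioned on the unoriented cycle system, the identity $\E[w_\eta(a)w_\eta(b)\mid \text{shape of }\eta]=1_{\{\F_a,\F_b\subset\operatorname{int}\eta\}}$ is what makes the counting clean; everything else (summing over $\eta$, linearity of expectation, pulling the sum out) is routine.
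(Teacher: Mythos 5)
Your argument is essentially the paper's: decompose $h(\F_a)$ into per-cycle winding numbers, expand $\E[h(\F_a)h(\F_b)]$ over pairs of cycles, kill the cross terms via the independent orientation flips of Proposition~\ref{propo:flips}, and identify the diagonal terms $w_\eta(a)w_\eta(b)$ with $1_{\{\eta\text{ surrounds both}\}}$.

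The one issue is your extended worry about non-self-avoiding cycles: this is a non-problem here. Recall the paper's definition of a cycle $\eta\subset\Pi$: $(V(\eta),\eta)$ is connected and \emph{each vertex has in-degree and out-degree one}. Since the square lattice is a simple graph, this forces $\eta$ to visit each vertex of $V(\eta)$ exactly once; cycles in $C(\Pi)$ are therefore self-avoiding by construction (the only degenerate case is a cardinality-two ``back-and-forth'' cycle, which has winding number zero around every face and is harmless). Consequently your ``loop-erasure / planarity'' reduction and the disambiguation of ``surrounds'' are unnecessary, and your clean case $w_\eta(a)\in\{-1,0,+1\}$, $w_\eta(a)w_\eta(b)=1_{\{a,b\in\operatorname{int}\eta\}}$ is already the whole story.
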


\begin{proof}
    By definition of the height function,
    \begin{multline}
        h(\F_a)=
        \#\{\eta\in C(\Pi):\text{$\eta$ is oriented clockwise and surrounds $\F_a$}\}
        \\
        -\#\{\eta\in C(\Pi):\text{$\eta$ is oriented counterclockwise and surrounds $\F_a$}\}.
    \end{multline}
    But Proposition~\ref{propo:flips} says that conditional on $\bar C(\Pi)$, the orientation of each 
    cycle has the distribution of a fair independent coin flip.
    This implies the identity in the lemma.
\end{proof}

\section{The strip estimate}

\label{sec:strip}

We introduce some new notations illustrated by Figure~\ref{fig:strip_lemma_3}:
for any $n\in\Z_{\geq 0}$,
let $\partial\Lambda_n\subset\Lambda_{n+1}\setminus\Lambda_n$
denote the set of vertices adjacent to $\Lambda_n$,
and write $e_1:=(1,0)\in\Z^2$ and $e_2:=(0,1)\in\Z^2$.
Consider some integer $n\in\Z_{\geq 1}$.
Define first the \emph{strip} $S_n\subset\Lambda_n$ by
\[
    S_n:=([-n,n]\cap\Z)\times ([-n,-1]\cap\Z).
\]
Its \emph{top} $T_n\subset S_n$ and \emph{bottom} $B_n\subset\partial\Lambda_n$ are defined by
\[
    T_n:=([-n,n]\cap\Z)\times\{-1\};
    \quad
    B_n:=([-n,n]\cap\Z)\times\{-n-1\}.
\]

\begin{lemma}[Strip estimate]
    \label{lemma:strip_new}
    Let $\beta\in[0,\infty)$.
    Then as $n\to\infty$, we have
    \[
        \stripXY{n}
        :=
        \max_{(x,y)\in T_n\times B_n}
        \inf_{m>n}
        \sum_{\omega\in\Gamma_{xy}[S_n]}
        \int
        z_{m,\beta}(\tau)
        d\rho_{\omega^*}(\tau)
        \geq e^{-\massXY{\beta} n+o(n)}.
    \]
    Write $(x_n,y_n)\in T_n\times B_n$ for some pair maximising the infimum
    in the display.
\end{lemma}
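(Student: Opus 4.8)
The plan is to sandwich $\stripXY{n}$ between $e^{-\massXY{\beta}n+o(n)}$ from above and below. For the upper bound, I would first note via the domain-monotonicity inequality~\eqref{Mon} that for $\tau$ supported in $\Lambda_n$ the weight $z_{m,\beta}(\tau)$ is non-increasing in $m>n$, so the infimum over $m$ is the decreasing limit $z_{\Z^2,\beta}(\tau):=\lim_{m\to\infty}z_{m,\beta}(\tau)$; by dominated convergence (the summand at $m=n+1$ being an integrable majorant, as it is bounded by $\langle\sigma_x\bar\sigma_y\rangle_{n+1,\beta}\le1$),
\[
    \stripXY{n}=\max_{(x,y)\in T_n\times B_n}\sum_{\omega\in\Gamma_{xy}[S_n]}\int z_{\Z^2,\beta}(\tau)\,d\rho_{\omega^*}(\tau).
\]
Passing to the limit $m\to\infty$ in~\eqref{2p.naive} with the constant radius function $\sqrt\beta$ (so the prefactor $r_x/r_y$ equals $1$) gives $\langle\sigma_x\bar\sigma_y\rangle_{\Z^2,\beta}=\sum_{\omega\in\Gamma_{xy}^{(1)}}\int z_{\Z^2,\beta}(\tau)\,d\rho_{\omega^*}(\tau)$, and since $\Gamma_{xy}[S_n]\subset\Gamma_{xy}^{(1)}$ whenever $x\in T_n$ and $y\in B_n$, the strip sum is a sub-sum of the two-point function. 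Hence $\stripXY{n}\le\max_{(x,y)\in T_n\times B_n}\langle\sigma_x\bar\sigma_y\rangle_{\Z^2,\beta}$. By the $90^\circ$-rotation symmetry of the lattice the mass in the vertical direction also equals $\massXY{\beta}$, so submultiplicativity of the two-point function gives $\langle\sigma_x\bar\sigma_y\rangle_{\Z^2,\beta}\le e^{-\massXY{\beta}n}$ for all admissible pairs, with equality up to $e^{o(n)}$ at the vertically aligned pair at distance $n$; thus $\stripXY{n}\le e^{-\massXY{\beta}n+o(n)}$.

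The content of the lemma is the matching lower bound $\stripXY{n}\ge e^{-\massXY{\beta}n-o(n)}$, i.e.\ that confining the path to $S_n$ costs at most a subexponential factor. I would prove this by \emph{constructing} confined crossings rather than truncating unconfined ones, because the Ginibre inequality~\eqref{Gin.a} is only a tool for \emph{combining} pieces: $z_{\Z^2,\beta}(\tau_1+\tau_2)\ge z_{\Z^2,\beta}(\tau_1)z_{\Z^2,\beta}(\tau_2)$ for $\tau_1,\tau_2$ with disjoint support. Concretely: fix a large integer $K$, place waypoints $y_0\in T_n$, $y_1,\dots,y_{K-1}$ descending through the middle column of $S_n$ at mutual $\ell^1$-distance $\approx n/K$, and $y_K\in B_n$; expand each $\langle\sigma_{y_{i-1}}\bar\sigma_{y_i}\rangle_{\Z^2,\beta}$ via~\eqref{2p.naive} and keep only those walks whose excursions stay within $\ell^\infty$-distance $n/(3K)$ of the segment $[y_{i-1},y_i]$, so that for $K$ large the resulting families lie inside $S_n$; after a purely geometric adjustment of the $y_i$ making these neighbourhoods have pairwise disjoint vertex sets, concatenate the families and apply~\eqref{Gin.a} to get $\stripXY{n}\ge\prod_{i=1}^K\bigl(\text{weight of the $i$-th confined family}\bigr)$ times geometric gluing corrections. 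Each factor is $e^{-\massXY{\beta}(n/K)+o_K(n/K)}$ by the definition of the mass, so letting $n\to\infty$ with $K$ fixed and then $K\to\infty$ absorbs the total error into $o(n)$ and the product is $e^{-\massXY{\beta}n+o(n)}$. Since $\stripXY{n}$ is a maximum over $(x,y)$, exhibiting this at a single pair suffices; the further gluing of several strip crossings into a macroscopic loop is deferred to Section~\ref{sec:loop} (following~\cite{duminil2020macroscopic}).

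The main obstacle is the confinement input invoked above: that the two-point function between points at distance $d$ carries at least a fraction $e^{-o(d)}$ of its weight on walks whose excursions off the joining segment have size $o(d)$, \emph{uniformly}, including for segments abutting the top and bottom rows of $S_n$, where the walk cannot fluctuate in one of the two vertical directions. This cannot be obtained by discarding long walks, since in the delocalised regime the two-point function decays only polynomially and long meandering walks contribute genuinely; the estimate must be squeezed out of~\eqref{Gin.a} itself (in its lattice-condition form) together with the reflection symmetries of $\Lambda_m$, used to fold stray excursions back into a bounded neighbourhood while tracking both the multiplicity of the fold and its effect on the local-time weight $z_{\Z^2,\beta}$ (a fold may merge local times and so decrease, even annihilate, this weight, which is exactly what must be bounded). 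Proving that the surgery costs at most $e^{o(n)}$, uniformly in $m$, is the crux of the section; the remaining ingredients—the exploration-process expansions of Section~\ref{sec:twopoint}, the bookkeeping of walks and of the simplex measures $\rho_{\omega^*}$, and the passage from $\max$ to average over the endpoints—are routine.
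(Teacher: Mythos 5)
Your proposal contains a genuine gap, which you yourself flag: the ``confinement input'' that the two-point function between points at distance $d$ carries weight $e^{-o(d)}$ on walks confined to a tube of width $o(d)$. You sketch a folding/reflection argument but immediately observe that folds can merge local times and even annihilate the weight $z_{m,\beta}(\tau)$ (since $Z_{m,R}$ vanishes if $R$ goes negative), and you leave the estimate as ``the crux of the section.'' As stated, this is not a proof, and the obstacle you identify is real: in the BKT regime the two-point function decays only polynomially, so crude truncation or a naïve multi-scale surgery would need to be uniformly efficient over an enormous range of scales, and it is far from clear that reflection folding can be made to cost only $e^{o(n)}$.

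More importantly, you explicitly dismiss the strategy the paper actually uses. You write that one should ``construct confined crossings rather than truncate unconfined ones, because the Ginibre inequality~\eqref{Gin.a} is only a tool for combining pieces.'' But the paper's proof is precisely a truncation argument, carried out not by cutting a single walk but by \emph{re-summing and erasing}: it expands $4\langle\sigma_{(0,0)}\bar\sigma_{(n+1,0)}\rangle_{m,\beta}=\sum_{v\in F}\langle\sigma_v\bar\sigma_u\rangle_{m,\beta}$ via~\eqref{2p.integrated}, splits each walk at the first hit $y\in\partial\Lambda_n$ (using rotation symmetry to take $y\in B_n$) and at the last vertex $x\in T_n$ before the walk leaves $S_n$, so that the \emph{middle} segment is automatically a member of $\Gamma_{xy}[S_n]$, and then \emph{erases} the outer two segments by resumming them via~\eqref{2p.integrated} into conditional two-point functions and bounding those by the trivial bound $\langle\sigma\bar\sigma'\rangle\le 1$. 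This gives
\[
\frac1{2\beta}\,\langle\sigma_{(0,0)}\bar\sigma_{(n+1,0)}\rangle_{m,\beta}
\le
\sum_{x\in T_n}\sum_{y\in B_n}\sum_{\omega\in\Gamma_{xy}[S_n]}\int z_{m,\beta}(\tau)\,d\rho_{\omega^*}(\tau),
\]
and since there are only $(2n+1)^2$ pairs $(x,y)$, the maximum on the left of the lemma is at least a polynomial fraction of $\langle\sigma_{(0,0)}\bar\sigma_{(n+1,0)}\rangle_{\Z^2,\beta}=e^{-\massXY{\beta}n+o(n)}$. No confinement lemma is needed, no waypoints, no reflection surgery; the entire cost of restricting to $S_n$ is absorbed by the trivial bound. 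The move you missed is that~\eqref{2p.integrated} plus $\langle\sigma\bar\sigma'\rangle\le 1$ lets you \emph{delete} any trailing or leading segment of the BFS walk for free—this is the point of introducing~\eqref{TB} in the paper's proof. (Your upper bound $\stripXY{n}\le e^{-\massXY{\beta}n+o(n)}$ is correct but unnecessary; the lemma asserts only the lower bound.)
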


Note that $z_{m,\beta}(\tau)$ is decreasing in $m\in\Z_{>n}$
due to~\eqref{Mon},
and therefore the infimum may also be interpreted as a limit.

\begin{proof}
    Fix $n\in\Z_{\geq 1}$ and $m\in\Z_{>n}$.
    Let $u:=(0,0)\in\Z^2$
    and write $r\equiv\sqrt\beta$.
    The proof has two ingredients:
    the BFS random walk expansion of the two-point function,
    in particular~\eqref{2p.integrated},
    and the \emph{trivial bound}
    \begin{equ}
        \tag{TB}
        \label{TB}
        \langle\sigma_x\bar\sigma_y\rangle\leq 1,
    \end{equ}
    which holds true on any finite graph and for any family of coupling constants.
    Let $F\subset \partial\Lambda_n$ denote the set containing the four points 
    of the form $(\pm (n+1),0)$ and $(0,\pm (n+1))$.
We start the proof by an obvious equality and an expansion of each two-point function:
\begin{equ}
    4\langle\sigma_{(0,0)}\bar\sigma_{(n+1,0)}\rangle_{m,\beta}
    =
    \sum_{v\in F}
    \langle\sigma_v\bar\sigma_u\rangle_{m,\beta}
    \stackrel{\eqref{2p.integrated}}=
    \frac2{r^2}
    \sum_{v\in F}
    \sum_{\omega\in\Gamma_{uv}}
    \int z_{m,\beta}(\tau)d\rho_\omega(\tau).
\end{equ}

The remainder of the proof consists of two steps.
\begin{enumerate}
    \item We start by splitting the random walk $\omega$ into three parts;
    see Figure~\ref{fig:strip_lemma_3}.
    The vertex $y\in\Z^2$ is the first vertex in $\partial\Lambda_n$ that $\omega$
    hits. Since the system is invariant under $\pi/2$ rotation,
    we may assume without loss of generality that $y\in B_n$.
    We then reverse the segment of $\omega$ from $u$ to $y$,
    and let $x\in\Z^2$ denote the last vertex in $T_n$ that this reversed segment
    visits before exiting $S_n$.
    Thus, the walk $\omega$ visits the vertices $u,x,y,v$ in this order.
    \item 
    We are interested in the middle line segment as it is the line
    segment appearing in the statement of the lemma.
    We must somehow erase the other two line segments in order to obtain the
    desired inequality.
    Each line segment is erased by applying the two-point identity and
    the trivial bound.
\end{enumerate}

\begin{figure}
    \includegraphics{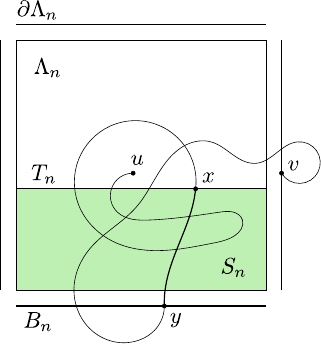}
    \caption{The decomposition of the BFS random walk into three segments}
    \label{fig:strip_lemma_3}
\end{figure}

To simplify the analysis, we first split $\omega$ into two segments,
erase one,
split the remainder, then again erase one of the two segments.
Splitting on the vertex $y$ and applying the symmetry yields
\begin{multline}
    4\langle\sigma_{(0,0)}\bar\sigma_{(n+1,0)}\rangle_{m,\beta}
    =
    \frac8{r^2}
    \sum_{v\in F}
    \sum_{y\in B_n}
    \sum_{\omega\in\Gamma_{uy}[\Lambda_n]}
    \sum_{\omega'\in\Gamma_{yv}}
    \int z_{m,\beta}(\tau)d\rho_{\omega^*\omega'}(\tau)
    \\
    \stackrel{\eqref{simplex}}=
    \frac8{r^2}
    \sum_{v\in F}
    \sum_{y\in B_n}
    \sum_{\omega\in\Gamma_{uy}[\Lambda_n]}
    \sum_{\omega'\in\Gamma_{yv}}
    \int\int z_{m,\beta}(\tau_1+\tau_2)d\rho_{\omega^*}(\tau_1)d\rho_{\omega'}(\tau_2).
\end{multline}
Rearranging and applying the two-point identity yields equality with
\begin{multline}
    \frac8{r^2}
    \sum_{v\in F}
    \sum_{y\in B_n}
    \sum_{\omega\in\Gamma_{uy}[\Lambda_n]}
    \int
    \left[
        \sum_{\omega'\in\Gamma_{yv}}
        \int
        \frac{z_{m,\beta}(\tau_1+\tau_2)}
        {z_{m,\beta}(\tau_1)}
        d\rho_{\omega'}(\tau_2)
    \right]
    z_{m,\beta}(\tau_1)
    d\rho_{\omega^*}(\tau_1)
    \\
    \stackrel{\eqref{2p.integrated}}=
    \frac8{r^2}
    \sum_{v\in F}
    \sum_{y\in B_n}
    \sum_{\omega\in\Gamma_{uy}[\Lambda_n]}
    \int
    \left[
        \frac{r^2}2
        \langle\sigma_v\bar\sigma_{y}\rangle_{m,r^2-2\tau_1}
    \right]
    z_{m,\beta}(\tau_1)
    d\rho_{\omega^*}(\tau_1)
    \\
    \stackrel{\eqref{TB}}\leq
    16
    \sum_{y\in B_n}
    \sum_{\omega\in\Gamma_{uy}[\Lambda_n]}
    \int
    z_{m,\beta}(\tau)
    d\rho_{\omega^*}(\tau).
\end{multline}
For the equality, it is important to observe that $\tau_1$ is almost
everywhere supported on $\Lambda_n$, so that the radii in the denominator
in~\eqref{2p.integrated} are really equal to $r$ and not to some reduced radius.
After applying the trivial bound, the terms no longer depend on $v$
so that the sum over $v$ may be replaced by a simple factor $|F|=4$.
Thus, we have now established our intermediate inequality
\[
    \frac{1}{4}\langle\sigma_{(0,0)}\bar\sigma_{(n+1,0)}\rangle_{m,\beta}
    \leq
    \sum_{y\in B_n}
    \sum_{\omega\in\Gamma_{uy}[\Lambda_n]}
    \int
    z_{m,\beta}(\tau)
    d\rho_{\omega^*}(\tau).
\]
Summing over the vertex $x\in T_n$ described before,
we obtain equality of the right hand side with
\begin{equs}
    &
    \sum_{x\in T_n}
    \sum_{y\in B_n}
    \sum_{\omega'\in\Gamma_{u(x+e_2)}[\Lambda_n]}
    \sum_{\omega\in\Gamma_{xy}[S_n]}
    \int
    z_{m,\beta}(\tau)
    d\rho_{\omega'\omega^*}(\tau)
    \\
    &
    \stackrel{\eqref{simplex}}= 
    \sum_{x\in T_n}
    \sum_{y\in B_n}
    \sum_{\omega\in\Gamma_{xy}[S_n]}
    \int
    \left[
    \int
    \sum_{\omega'\in\Gamma_{u(x+e_2)}[\Lambda_n]}
    \frac{z_{m,\beta}(\tau_1+\tau_2)}{z_{m,\beta}(\tau_2)}
    d\rho_{\omega'}(\tau_1)
    \right]
    z_{m,\beta}(\tau_2)
    d\rho_{\omega^*}(\tau_2)
    \\
    &\leq 
    \sum_{x\in T_n}
    \sum_{y\in B_n}
    \sum_{\omega\in\Gamma_{xy}[S_n]}
    \int
    \left[
    \int
    \sum_{\omega'\in\Gamma_{u(x+e_2)}}
    \frac{z_{m,\beta}(\tau_1+\tau_2)}{z_{m,\beta}(\tau_2)}
    d\rho_{\omega'}(\tau_1)
    \right]
    z_{m,\beta}(\tau_2)
    d\rho_{\omega^*}(\tau_2)
    \\
    &
    \stackrel{\eqref{2p.integrated}}=
    \sum_{x\in T_n}
    \sum_{y\in B_n}
    \sum_{\omega\in\Gamma_{xy}[S_n]}
    \int
    \left[
    \frac{r^2}2
    \langle
        \sigma_u\bar\sigma_{x+e_2}
    \rangle_{m,r^2-2\tau_2}
    \right]
    z_{m,\beta}(\tau_2)
    d\rho_{\omega^*}(\tau_2)
    \\
    &
    \stackrel{\eqref{TB}}\leq
    \frac{r^2}2
    \sum_{x\in T_n}
    \sum_{y\in B_n}
    \sum_{\omega\in\Gamma_{xy}[S_n]}
    \int
    z_{m,\beta}(\tau)
    d\rho_{\omega^*}(\tau).
\end{equs}
The first equality is a simple rearrangement.
The first inequality is a direct consequence of the fact that $\Gamma_{u(x+e_2)}[\Lambda_n]\subset\Gamma_{u(x+e_2)}$.
When applying~\eqref{2p.integrated}, it is again important to observe that $\tau_2$ 
is supported on $S_n$ so that the radii dropping out are indeed $r$.
Combining with the intermediate inequality and $r^2=\beta$ yields
\begin{equ}
    \label{eq:final_ineq}
    \frac1{2\beta}\langle\sigma_{(0,0)}\bar\sigma_{(n+1,0)}\rangle_{m,\beta}
    \leq
    \sum_{x\in T_n}
    \sum_{y\in B_n}
    \sum_{\omega\in\Gamma_{xy}[S_n]}
    \int
    z_{m,\beta}(\tau)
    d\rho_{\omega^*}(\tau).
\end{equ}
The left hand side is increasing in $m$ by the Ginibre inequality,
and the right hand side is decreasing in $m$ due to~\eqref{Mon}.
Therefore the inequality remains true if we replace the graph $\Lambda_m$ by 
$\Z^2$ on the left, and if we take an infimum over $m>n$
inside the integral
on the right.
The lemma follows by taking $m\to\infty$,
maximising over $x$ and $y$,
and observing that $|T_n|=|B_n|=(2n+1)$.
\end{proof}


\section{The loop estimate}
\label{sec:loop}

The purpose of this section is to cast the strip estimate into a useful form.
For $n\in\Z_{\geq 1}$, let $\GammaLoop{n}\subset\Gamma_{(0,0)(0,0)}^{(2)}$
denote the set of walks $\omega\in\Gamma$
such that:
\begin{itemize}
    \item The walk starts at $\omega_0=(0,0)$ and $\omega_1=e_1$,
    \item The walk ends at $\omega_{\ell(\omega)-1}=-e_1$ and $\omega_{\ell(\omega)}=(0,0)$,
    \item The walk does not hit the vertical axis $\{0\}\times\Z$
    other than at its start-\ and endpoint, except that it hits
    the vertex $(0,n)$ precisely once.
\end{itemize}
See Figure~\ref{fig:new_combined}, \textsc{Right}
for an illustration of a loop in $\GammaLoop{2kn+1}$.

\begin{lemma}[Loop estimate]
    \label{lemma:BFS_loop_estimate}
    Let $\beta\in[0,\infty)$.
    Then
    for each $n,k\in\Z_{\geq 1}$,
    we have
    \[
        \liminf_{m\to\infty}
        \sum_{\omega\in\GammaLoop{2kn+1}}
        \int
        z_{m,\beta}(\tau)
        d\rho_{\omega^*}(\tau)
        \geq
        c_\beta^{8n+6}\times \stripXY{n}^{4k};
        \qquad 
        c_\beta:=\frac\beta2e^{-4\beta}
        .
    \]
    In particular, by setting $k=n$, sending $n$ to $\infty$,
    and applying the strip estimate, we get
    \[
        \liminf_{m\to\infty}
        \sum_{\omega\in\GammaLoop{k_n+1}}
        \int
        z_{m,\beta}(\tau)
        d\rho_{\omega^*}(\tau)
        \geq
        e^{-2\massXY{\beta}k_n+o(k_n)}
    \]
    along the subsequence $(k_n)_{n\in\Z_{\geq 1}}$
    defined by $k_n:=2n^2$.
\end{lemma}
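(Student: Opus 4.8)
The plan is to realise an element of $\GammaLoop{2kn+1}$ as a rigid concatenation of $4k$ independent strip crossings — one across each of $4k$ pairwise disjoint, translated and reflected copies $S^{(1)},\dots,S^{(4k)}$ of $S_n$ — interspersed with a uniformly bounded family of deterministic connector segments of total length at most $8n+6$, and to bound the contribution of exactly this family of loops from below. Write $(x^{(i)},y^{(i)})$ for the images of the maximising pair $(x_n,y_n)$ of Lemma~\ref{lemma:strip_new} under the isometry placing the $i$-th strip. Because $B_n$ lies one row beyond $S_n$, exactly where the top line $T_n$ of a reflected strip can be positioned, I would stack $S^{(1)},\dots,S^{(2k)}$ as a disjoint column climbing the right half-plane $\{x\ge1\}$, arranged so that $x^{(i+1)}=y^{(i)}$, with reflections alternated so that the fixed transverse offset of $y_n$ from $x_n$ cancels over consecutive strips and the column stays in a bounded vertical band; then $S^{(2k+1)},\dots,S^{(4k)}$ analogously in the left half-plane $\{x\le-1\}$. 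The loop then runs: a connector from $(0,0)$ with first step $e_1$ to $x^{(1)}$; the $2k$ crossings of $S^{(1)},\dots,S^{(2k)}$, which together raise the walk by $2kn$ rows; a turn-around connector from $y^{(2k)}$ through $(0,2kn+1)$ to $x^{(2k+1)}$; the $2k$ crossings of $S^{(2k+1)},\dots,S^{(4k)}$; and a closing connector from $y^{(4k)}$ with penultimate vertex $-e_1$ back to $(0,0)$. Routing the connectors to avoid $\{0\}\times\Z$ except at $(0,0)$ and $(0,2kn+1)$, and since all strips sit strictly off that axis and below height $2kn+1$, the resulting walk lies in $\GammaLoop{2kn+1}$; an elementary bookkeeping lets one take the connectors of total length at most $8n+6$. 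Disjointness of the strips makes the concatenation map injective.

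\textbf{Factorisation and the strip contribution.} Fix $m$ large enough that $\Lambda_m$ contains the whole configuration, and fix such a concatenated loop $\omega$. The pieces — strips and connector segments — have pairwise disjoint vertex supports (each junction vertex $y^{(i)}=x^{(i+1)}$ lies in $T^{(i+1)}\subset S^{(i+1)}$ and is not a vertex of $\omega^{(i)*}$), so $\rho_{\omega^*}=\prod_j\rho_{k(\omega^{(j)*})}$, the product over the pieces. Splitting a local time as $\tau=\sum_j\tau_j$ along the pieces, the $\tau_j$ have pairwise disjoint supports, so iterating~\eqref{Gin.a} gives $z_{m,\beta}(\tau)\ge\prod_j z_{m,\beta}(\tau_j)$, whence
\[
    \int z_{m,\beta}(\tau)\,d\rho_{\omega^*}(\tau)
    \ \ge\
    \prod_j \int z_{m,\beta}(\tau_j)\,d\rho_{k(\omega^{(j)*})}(\tau_j).
\]
Summing over concatenated loops, the right-hand side factorises over the pieces, the $4k$ crossings being chosen independently and the connectors fixed. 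Each strip contributes $\sum_{\omega^{(i)}\in\Gamma_{x^{(i)}y^{(i)}}[S^{(i)}]}\int z_{m,\beta}(\tau)\,d\rho_{\omega^{(i)*}}(\tau)$; this is decreasing in $m$ by~\eqref{Mon} once $\Lambda_m\supset S^{(i)}$, so it is at least its $m\to\infty$ limit, which by translation and reflection invariance of that limit — a consequence, via the Ginibre inequality, of uniqueness of the infinite-volume XY state under a finite perturbation — equals $\inf_{m'>n}(\cdots)$ for the unreflected strip at $(x_n,y_n)$, namely $\stripXY{n}$.

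\textbf{Connector cost and conclusion.} For a connector segment I would restrict each time coordinate to $[0,\beta/2]$; on that region $z_{m,\beta}(\tau_j)\ge e^{-4\beta\,|\operatorname{supp}\tau_j|}$, since raising the squared radius at one vertex of degree at most $4$ from a value in $[0,\beta]$ back to $\beta$ multiplies the partition function by at most $e^{4\beta}$ (interpolate vertex by vertex), while $\rho_a([0,\beta/2])=(\beta/2)^a/a!$ by Lemma~\ref{lemma:simplex}; a connector owning $p$ vertices thus contributes at least $(\tfrac\beta2 e^{-4\beta})^{p}=c_\beta^{p}$. Multiplying over the connectors, which own at most $8n+6$ vertices in total, and using $c_\beta\le 1$, produces the factor $c_\beta^{8n+6}$. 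Hence for all large $m$ the left side of the lemma is at least $c_\beta^{8n+6}\stripXY{n}^{4k}$, and $\liminf_{m\to\infty}$ gives the first display. For the ``in particular'', set $k=n$, so $2kn+1=k_n+1$ with $k_n=2n^2$; inserting $\stripXY{n}\ge e^{-\massXY{\beta}n+o(n)}$ from Lemma~\ref{lemma:strip_new} together with $8n\log c_\beta=o(n^2)$ (the case $\beta=0$ being trivial, both sides being zero) yields $c_\beta^{8n+6}\stripXY{n}^{4n}\ge e^{-4\massXY{\beta}n^2+o(n^2)}=e^{-2\massXY{\beta}k_n+o(k_n)}$.

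\textbf{Main obstacle.} The analytic input is a short combination of~\eqref{Gin.a},~\eqref{Mon}, Lemma~\ref{lemma:simplex} and the crude $e^{-4\beta}$ bound; the real work is the combinatorial-geometric task of actually exhibiting the $4k$ disjoint reflected strips whose designated crossing endpoints chain up and whose concatenation, after inserting the few rigid connectors, genuinely lies in $\GammaLoop{2kn+1}$ — in particular keeping the total connector length linear in $n$ and uniform in $k$ (controlling the transverse drift by alternating reflections) and pinning down the constant $8n+6$. This is the step analogous to the strip-gluing construction of~\cite{duminil2020macroscopic}.
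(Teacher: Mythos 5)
Your proof is correct and follows essentially the same strategy as the paper: stack translated/reflected copies of $S_n$ vertically in each half-plane so that the designated crossing endpoints chain up (using alternating reflections to cancel the transverse drift, which is precisely why the paper introduces the intermediate double-strip $S'$), close the loop with short horizontal connectors, and then factor the contribution via~\eqref{Gin.a} and~\eqref{simplex}, using~\eqref{Mon} and lattice symmetries to bound each strip factor by $\stripXY{n}$ and the per-vertex bound $c_\beta$ for the connectors. The only cosmetic differences are that the paper groups the crossings as $2k$ copies of a width-$(4n{+}1)$ double strip $S'$ rather than $4k$ copies of $S_n$, and justifies moving strips around via finite-volume~\eqref{Mon} directly rather than by appealing to translation invariance of the infinite-volume limit; both lead to the same constants.
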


\begin{proof}
    The proof relies on the following inputs.
    \begin{itemize}
        \item If two walks $\omega$ and $\omega'$ are disjoint,
        then $\tau_1$ and $\tau_2$ have disjoint support almost everywhere in the product
        measure $\rho_{\omega}(\tau_1)\times\rho_{\omega'}(\tau_2)$.
        Therefore we may apply the Ginibre inequality~\eqref{Gin.a} for comparing
        normalised partition functions.
        \item The lower bound in the strip estimate (Lemma~\ref{lemma:strip_new})
        is the motor of the inequality we aim to prove.
        The paths appearing in the strip estimate are geometrically constrained
        to the strip. This is convenient: it will allow us to combine different
        paths and be certain that they are disjoint (except at the overlapping
        start- and endpoints) so that the Ginibre inequality applies.
        \item Finally, we shall use the symmetries of the square lattice and the monotonicity
        in the graph~\eqref{Mon} to transport the strip estimate around the graph.
    \end{itemize}

    Fix $\beta\in[0,\infty)$
    and let $n\in\Z_{\geq 1}$.
    Define the \emph{alternative strip} $S':=([-2n,2n]\cap\Z)\times([-2n,-1]\cap\Z)$.
    Let $x':=-e_2$ and $y':=-(2n+1)e_2$;
    see Figure~\ref{fig:new_combined}, \textsc{Left}.
    We first prove the following claim.

    \begin{claim*}
        We have
        \begin{equ}
            \label{eq:left}
        \inf_{m>2n}
        \sum_{\omega\in\Gamma_{x'y'}[S']}
        \int
        z_{m,\beta}(\tau)
        d\rho_{\omega^*}(\tau)
        \geq\stripXY{n}^2.
        \end{equ}
    \end{claim*}

    \begin{proof}[Proof of the claim]
        \begin{figure}

            \begin{subfigure}{0.46\textwidth}
                \includegraphics{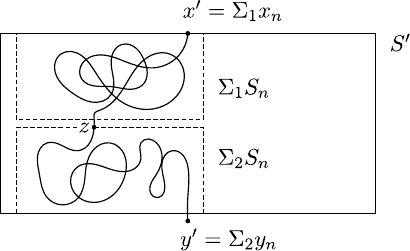}
            \end{subfigure}
            \hfill
            \begin{subfigure}{0.31\textwidth}
                \includegraphics{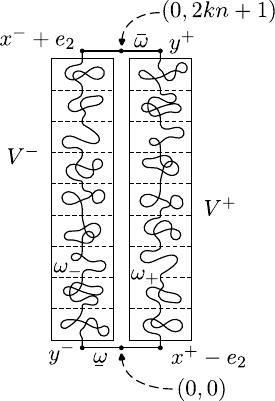}
            \end{subfigure}

            \caption{}
            \label{fig:new_combined}
        \end{figure}
        \comment{Arbitrary alignment of the figure should be reconsidered when changing format.}
    Let $S_n$, $x_n$, and $y_n$ be defined as in Lemma~\ref{lemma:strip_new}.
    We now define two symmetries of the square lattice which are illustrated by Figure~\ref{fig:new_combined},
    \textsc{Left}.
    Let $\Sigma_1$ denote the unique horizontal translation
    such that $\Sigma_1 x_n=x'$.
    Let $\Sigma_2$ denote the symmetry of the square lattice which:
    \begin{itemize}
        \item Is the composition of a translation with a reflection through the $y$-axis $\{0\}\times\R$,
        \item Maps $x_n$ to $\Sigma_2x_n=\Sigma_1y_n=:z$,
        \item Maps $y_n$ to $\Sigma_2y_n=y'$.
    \end{itemize}
    The definitions imply that
    $\Sigma_1 S_n$ and $\Sigma_2 S_n$ are disjoint
    and contained in $S'$.

    Lemma~\ref{lemma:strip_new}, monotonicity in domains~\eqref{Mon},
    and invariance of the XY model under the symmetries of the square lattice
    jointly imply that for $i\in\{1,2\}$ we have
    \begin{equ}
        \label{eq:applied_strip}
        \sum_{\omega\in\Gamma_{\Sigma_ix_n\Sigma_iy_n}[\Sigma_iS_n]}
        \int
        z_{m,\beta}(\tau)
        d\rho_{\omega^*}(\tau)
        \geq\stripXY{n}
    \end{equ}
    as soon as $m\in\Z_{\geq 0}$ is so large that $\Sigma_iS_n\subset\Lambda_m$.
    Now focus on the expression on the left in~\eqref{eq:left}.
    By splitting on the first vertex $\tilde z$ outside $\Sigma_1S_n$
    that the path hits,
    we get
    \begin{equs}
        &\sum_{\omega\in\Gamma_{x'y'}[S']}
        \int
        z_{m,\beta}(\tau)
        d\rho_{\omega^*}(\tau)
        \\
            &\qquad
        =
        \sum_{\tilde z\in S'\setminus \Sigma_1S_n}
        \sum\nolimits_{\omega\in\Gamma_{x'\tilde z}[\Sigma_1S_n]}
        \sum\nolimits_{\tilde\omega\in\Gamma_{\tilde zy'}[S']}
        \int
        z_{m,\beta}(\tau)
        d\rho_{\omega^*\tilde\omega^*}(\tau)
        \\
        &\qquad
        \geq
        \sum\nolimits_{\omega\in\Gamma_{x'z}[\Sigma_1S_n]}
        \sum\nolimits_{\tilde\omega\in\Gamma_{zy'}[\Sigma_2S_n]}
        \int
        z_{m,\beta}(\tau)
        d\rho_{\omega^*\tilde\omega^*}(\tau)
        \\
        &\qquad
        \geq
        \left[
        \sum_{\omega\in\Gamma_{x'z}[\Sigma_1S_n]}
        \int
        z_{m,\beta}(\tau_1)
        d\rho_{\omega^*}(\tau_1)
        \right]\left[
        \sum_{\tilde\omega\in\Gamma_{zy'}[\Sigma_2S_n]}
        \int
        z_{m,\beta}(\tau_2)
        d\rho_{\tilde\omega^*}(\tau_2)
        \right]
        \\
        &\qquad
        \geq
        \stripXY{n}^2.
    \end{equs}
    This first inequality holds true because we sum over a 
    smaller set of pairs $(\omega,\tilde\omega)$ of paths,
    the second inequality is~\eqref{simplex} and~\eqref{Gin.a},
    and the final
    inequality is just~\eqref{eq:applied_strip} applied to both factors.
    This proves the claim.
    \renewcommand\qedsymbol{}
    \end{proof}

    Fix $k\in\Z_{\geq 1}$ and define
    \begin{multline}
        V^-:=([-4n-1,-1]\cap\Z)\times([1,2kn]\cap\Z);
        \\
        x^-:=(-2n-1,2kn);
        \quad
        y^-:=(-2n-1,0);
    \end{multline}
    see Figure~\ref{fig:new_combined}, \textsc{Right}.
    By writing $V^-$ as $k$ disjoint translates of $S'$
    and reasoning as in the proof of the claim, we observe that
    \begin{equ}
        \label{eq:longestimate}
        \inf_{m>4kn}
        \sum_{\omega_-\in\Gamma_{x^-y^-}[V^-]}
        \int
        z_{m,\beta}(\tau)
        d\rho_{\omega_-^*}(\tau)
        \geq\stripXY{n}^{2k};
    \end{equ}
    see Figure~\ref{fig:new_combined}, \textsc{Right}.
    The same holds true if we replace $-$ by $+$
    where
    \begin{multline}
        V^+:=([1,4n+1]\cap\Z)\times([1,2kn]\cap\Z);
        \\
        x^+:=(2n+1,1);
        \quad
        y^+:=(2n+1,2kn+1).
    \end{multline}
    In order to turn the two line segments $\omega_+$
    and $\omega_-$ into a loop in $\GammaLoop{2kn+1}$,
    we must connect the two line segments at the top and bottom.
    We make those connections in the easiest possible way:
    by adding straight line segments of minimal length,
    see Figure~\ref{fig:new_combined}, \textsc{Right}.
    Let $\bar\omega$ denote the straight line segment
    from $y^+$ to $x^-+e_2$,
    and let $\ubar\omega$ denote the straight line segment from
    $y^-$ to $x^+-e_2$.
    This choice of line segments is such that for any 
    $\omega_+$ and $\omega_-$ in their relevant sets of paths:
    \begin{itemize}
        \item The loop $\omega_+^* \bar\omega \omega_-^* \ubar\omega$
        lies in $\GammaLoop{2kn+1}$ (up to indexation),
        \item The four line segments $\omega_+^*$,
        $\bar\omega$, $\omega_-^*$, and $\ubar\omega$ are disjoint.
    \end{itemize}
    Note that for any vertex $x\in\Lambda_m$, we have
    \begin{equ}
        \label{eq:siteestimate}
        \int_0^\infty  z_{m,\beta}(\tau\cdot 1_x) d\tau\geq 
        \frac\beta2e^{-4\beta}=:c_\beta.
    \end{equ}
    Conclude that
    \begin{multline}
        \inf_{m>4kn}
        \sum_{\omega\in\GammaLoop{2kn+1}}
        \int
        z_{m,\beta}(\tau)
        d\rho_{\omega^*}(\tau)
        \\
        \geq
        \inf_{m>4kn}
        \sum_{\omega_+\in\Gamma_{x^+y^+[V^+]}}
        \sum_{\omega_-\in\Gamma_{x^-y^-}[V^-]}
        \int
        z_{m,\beta}(\tau)
        d\rho_{\omega_+^* \bar\omega \omega_-^* \ubar\omega}(\tau)
        \\
        \stackrel{\eqref{Gin.a}}\geq
        c_\beta^{8n+6}\times\stripXY{n}^{4k}.
    \end{multline}
    The first inequality holds true because we effectively
    sum over a smaller set of paths.
    The second inequality holds true because we 
    may decompose the loop into its four disjoint segments with the Ginibre
    inequality,
    using~\eqref{eq:longestimate} for the vertical segments
    and~\eqref{eq:siteestimate} for each vertex visited
    by the horizontal segments.
    This completes the proof.
\end{proof} 

\section{Lower bound on the height function correlation length}
\label{sec:heightcorrlength}

\begin{lemma}
    \label{lemma:hierlemma}
    For any $\beta\in[0,\infty)$ and $n,n',m\in\Z_{\geq 0}$
    with $n\leq n'<m$, we have
    \begin{equ}
        \label{eq:finalbound}
        \Cov_{m,\beta}[(0,0);(0,n)]
        \geq
        \sum_{\omega\in\GammaLoop{n'+1}}
        \int
        z_{m,\beta}(\tau)d\rho_{\omega^*}(\tau).
    \end{equ}
\end{lemma}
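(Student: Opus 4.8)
The plan is to read~\eqref{eq:finalbound} through the cycle decomposition $C(\Pi)$. By Lemma~\ref{lemma:cov},
\[
    \Cov_{m,\beta}[(0,0);(0,n)]
    =
    \E_{m,\beta/2}\big[\,\#\{\eta\in C(\Pi):\text{$\eta$ surrounds both $\F_{(0,0)}$ and $\F_{(0,n)}$}\}\,\big],
\]
so it suffices to produce an event $A\subset\{\partial\n=0\}$ with $\P_{m,\beta/2}[A]$ at least the right-hand side of~\eqref{eq:finalbound}, on which $C(\Pi)$ contains at least one cycle surrounding both faces; then $\Cov_{m,\beta}[(0,0);(0,n)]\ge\P_{m,\beta/2}[A]$, which is the desired bound.

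To build $A$ I would run the exploration of Proposition~\ref{pro:loop_decomp_lemma} from the vertex $(0,0)$: force the first step to use the highest outgoing Poisson edge from $(0,0)$ and demand that it point to $e_1=(1,0)$; afterwards always follow the highest not-yet-used outgoing edge; and stop the first time the walk returns to $(0,0)$. Iterating the Poisson decomposition~\eqref{eq:Poisson_decomposition} exactly as in the derivation of~\eqref{eq:naive_backbone} (but retaining, at $(0,0)$, only the branch in which the first edge points to $e_1$) expands $Z_{m,\beta}$ times the right-hand side of~\eqref{eq:finalbound} as $\M_{m,\beta/2}$ of the event that this exploration terminates with a closed walk $\omega$ lying in $\GammaLoop{n'+1}$ — the hypothesis $n'<m$ ensures $(0,n'+1)\in\Lambda_m$, so such loops fit in the box — and that the Poisson edges left unused by $\omega$ form a sourceless configuration; since $\omega$ is itself a closed walk this is equivalent to $\partial\n=0$. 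Dividing by $Z_{m,\beta}$ identifies the right-hand side of~\eqref{eq:finalbound} with $\P_{m,\beta/2}[A]$ for this event $A$.

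It remains to check that on $A$ the decomposition $C(\Pi)$ contains a cycle surrounding both faces. On $A$, Proposition~\ref{pro:loop_decomp_lemma} identifies the Poisson edges used by $\omega$ with the union of a $\preceq$-up-set $H\subset C(\Pi)$ whose minimal element $\eta$ contains the unique edge that $\omega$ uses out of $(0,0)$; since the used edges have in-degree and out-degree one at $(0,0)$, the cycle $\eta$ also contains the incoming edge $(-1,0)\to(0,0)$ through which $\omega$ returns, and every edge of $\eta$ is an edge of $\omega$, so $\eta$ is a self-avoiding lattice cycle meeting the vertical axis $\{0\}\times\Z$ only inside $\{(0,0),(0,n'+1)\}$ and using its two horizontal edges at $(0,0)$. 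A short planarity argument then shows that any such $\eta$ surrounds $\F_{(0,0)}$ and $\F_{(0,n)}$ whenever $0\le n\le n'$: near $(0,0)$ the curve $\eta$ coincides with the segment $[-1,1]\times\{0\}$, so the two sides of that segment lie in the two complementary components of the Jordan curve $\eta$; the lower side contains the unbounded ray $\{0\}\times(-\infty,0)$ (which avoids $\eta$), hence the upper side is the interior. As $\eta$ reaches the left-half-plane vertex $(-1,0)$, being closed it crosses the axis a second time, so it passes through $(0,n'+1)$, necessarily through its horizontal edges to $(\pm 1,n'+1)$; in particular $\eta$ uses no vertical edge on the line $x=0$. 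Consequently the connected set $\{0\}\times(0,n'+1)$ avoids $\eta$ and meets the interior, so it lies in the interior, and for $0\le n\le n'$ the segment from $(0,n+\tfrac12)$ to $(\tfrac12,n+\tfrac12)$ avoids $\eta$ as well, placing the centre $(\tfrac12,n+\tfrac12)$ of $\F_{(0,n)}$ in the interior of $\eta$. Taking $n=0$ and the given $n$ shows $\eta$ surrounds both faces, and the lemma follows.

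The step I expect to be most delicate is making the exploration-to-cycle dictionary of the third paragraph watertight — in particular verifying that the returning edge $(-1,0)\to(0,0)$ lies on the very same cycle $\eta$ that carries the first edge out of $(0,0)$, and that $\eta$ inherits the axis-avoidance of $\omega$ — rather than the (routine) bookkeeping of the Poisson expansion or the (elementary) planar-topology step.
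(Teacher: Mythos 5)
Your proposal is correct and follows the paper's own argument: expand $1=\langle\sigma_{(0,0)}\bar\sigma_{(0,0)}\rangle_{m,\beta}$ by exploring the top outgoing Poisson edge at $(0,0)$ and then running the naive exploration back to $(0,0)$, identify the probability that the exploration path lies in $\GammaLoop{n'+1}$ with the right-hand side of \eqref{eq:finalbound}, and on that event use Proposition~\ref{pro:loop_decomp_lemma} together with the constraint that the walk touches the vertical axis only at $(0,0)$ and $(0,n'+1)$ to extract a cycle of $C(\Pi)$ surrounding both faces, concluding via Lemma~\ref{lemma:cov}. You spell out the Jordan-curve step more explicitly than the paper does, but the approach and all essential steps are the same.
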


\begin{proof}
    Let $x= (0,0)$.
    Consider a sample from $\P_{m,\beta/2}$ and explore the outgoing Poisson edge
    from $x$ with the highest time, if such an edge exists.
    If this edge exists, then run Exploration~\ref{strategy:naive} until returning back to $x$.
    This yields a slight modification of~\eqref{2p.naive}: we get
    \[
        1
        =
        \langle\sigma_x\bar\sigma_x\rangle_{m,\beta}
        =
        z_{m,\beta}((\beta/2)\cdot 1_x)
        +   
        \sum_{\omega\in\Gamma_{xx}^{(2)}}
        \int
        z_{m,\beta}(\tau)d\rho_{\omega^*}(\tau).
    \]
    The term on the left corresponds to the event
    that no Poisson edge is incident to $x$.
    The probability that the path $\omega$ so explored lies in 
    $\GammaLoop{n'+1}$ is precisely equal to
    \[        \sum_{\omega\in\GammaLoop{n'+1}}
    \int
    z_{m,\beta}(\tau)d\rho_{\omega^*}(\tau).
    \]
    Let $\Pi'\subset\Pi$ denote the corresponding set of Poisson edges traversed.
    Then $\Pi'$ is the disjoint union of cycles in $C(\Pi)$
    because of Proposition~\ref{pro:loop_decomp_lemma}.
    Since $\omega$ hits the vertical axis $\{0\}\times\Z$
    precisely twice in order to pass from one side of this axis to the other,
    namely at $(0,0)$ and $(0,n'+1)$,
    there is some cycle $\eta\in C(\Pi)$ with the same properties.
    In particular, this cycle surrounds both $\F_{(0,0)}$ and $\F_{(0,n)}$.
    This finishes the proof because we have now established that
    \begin{multline}
        \Cov_{m,\beta}[(0,0);(0,n)]
        =
        \E_{m,\beta/2}[
            \#\{\eta\in C(\Pi):
        \text{$\eta$ surrounds both $\F_{(0,0)}$ and $\F_{(0,n)}$}\}
        ]
        \\
        \geq
        \P_{m,\beta/2}[
            \exists\eta\in C(\Pi):
            \text{$\eta$ surrounds both $\F_{(0,0)}$ and $\F_{(0,n)}$}   
            ]
            \\
        \geq
        \sum_{\omega\in\GammaLoop{n'+1}}
    \int
    z_{m,\beta}(\tau)d\rho_{\omega^*}(\tau).
    \end{multline}
    The first equality is due to Lemma~\ref{lemma:cov}.
\end{proof}

\begin{corollary}
    \label{cor:lower_bound_height_corr}
    For any $\beta\in[0,\infty)$, we have, as $n\to\infty$,
    \[
        \Cov_{\infty,\beta}[(0,0);(0,n)]
        \geq
        e^{-2\massXY{\beta}n+o(n)}.
    \]
    In particular, $\massHeight{\beta}\leq 2\massXY{\beta}$.
\end{corollary}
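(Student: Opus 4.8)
The plan is to derive Corollary~\ref{cor:lower_bound_height_corr} from Lemma~\ref{lemma:hierlemma} and Lemma~\ref{lemma:BFS_loop_estimate} essentially by combining the two estimates and passing to the limit. First I would fix $\beta\in[0,\infty)$ and start from the bound in Lemma~\ref{lemma:hierlemma}, which for $n\le n'<m$ reads
\[
    \Cov_{m,\beta}[(0,0);(0,n)]
    \geq
    \sum_{\omega\in\GammaLoop{n'+1}}
    \int z_{m,\beta}(\tau)\,d\rho_{\omega^*}(\tau).
\]
Since $\Cov_{m,\beta}[(0,0);(0,n)]$ is non-decreasing in $m$ and converges to $\Cov_{\infty,\beta}[(0,0);(0,n)]$, I can let $m\to\infty$ on the left-hand side; on the right-hand side I take a $\liminf$ over $m$ (which only makes the inequality weaker for fixed $m$, but survives in the limit), obtaining
\[
    \Cov_{\infty,\beta}[(0,0);(0,n)]
    \geq
    \liminf_{m\to\infty}
    \sum_{\omega\in\GammaLoop{n'+1}}
    \int z_{m,\beta}(\tau)\,d\rho_{\omega^*}(\tau)
\]
for every $n'\ge n$.

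Next I would specialise $n'$ to exploit the loop estimate. Lemma~\ref{lemma:BFS_loop_estimate} gives, along the subsequence $k_n:=2n^2$,
\[
    \liminf_{m\to\infty}
    \sum_{\omega\in\GammaLoop{k_n+1}}
    \int z_{m,\beta}(\tau)\,d\rho_{\omega^*}(\tau)
    \geq e^{-2\massXY{\beta}k_n+o(k_n)}.
\]
So for the specific height values $n=k_j$ (equivalently, choosing $n'=k_j$ in the displayed bound and reading it at $n=k_j$) one gets $\Cov_{\infty,\beta}[(0,0);(0,k_j)]\ge e^{-2\massXY{\beta}k_j+o(k_j)}$. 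To upgrade this to a statement for all large $n$ (not just along the sparse subsequence $k_j=2j^2$), I would use monotonicity of $\Cov_{\infty,\beta}[(0,0);(0,n)]$ in $n$: the covariance is a sum over cycles surrounding both $\F_{(0,0)}$ and $\F_{(0,n)}$ (Lemma~\ref{lemma:cov}), and a cycle surrounding $\F_{(0,0)}$ and $\F_{(0,n)}$ also surrounds $\F_{(0,0)}$ and $\F_{(0,n')}$ for any $n'\le n$, so $\Cov_{\infty,\beta}[(0,0);(0,n)]$ is non-increasing in $n$. Given $n$, pick $j$ minimal with $k_j\ge n$; since consecutive terms of $(k_j)$ satisfy $k_{j+1}/k_j\to 1$, we have $k_j=n+o(n)$, hence
\[
    \Cov_{\infty,\beta}[(0,0);(0,n)]
    \geq
    \Cov_{\infty,\beta}[(0,0);(0,k_j)]
    \geq e^{-2\massXY{\beta}k_j+o(k_j)}
    = e^{-2\massXY{\beta}n+o(n)},
\]
which is the claimed bound. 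Finally, taking $-\frac1n\log(\blank\wedge 1)$ and $\liminf_{n\to\infty}$ in the definition~\eqref{eq:DefMassHeight} of $\massHeight{\beta}$ yields $\massHeight{\beta}\le 2\massXY{\beta}$.

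The only genuine subtlety, and the step I would be most careful about, is the interchange of limits: Lemma~\ref{lemma:hierlemma} is stated for finite $m$ with $n'<m$, while Lemma~\ref{lemma:BFS_loop_estimate} already packages the $\liminf_{m\to\infty}$ on the loop side, so I must make sure the $\liminf$ over $m$ is applied to the right-hand side \emph{before} invoking the loop estimate, and that the left-hand side genuinely converges to $\Cov_{\infty,\beta}$ (this is the pointwise convergence $\Cov_{n,\beta}\to\Cov_{\infty,\beta}$ recorded in Subsection~\ref{subsec:phtwo}). Everything else — monotonicity of the covariance in the height separation, and the asymptotic density of the subsequence $k_n=2n^2$ — is routine. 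If one prefers to avoid the subsequence argument entirely, an alternative is to re-run the loop estimate directly with $k=n$ replaced by a free parameter and optimise, but the monotonicity trick above is cleaner and suffices.
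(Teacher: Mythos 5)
Your overall strategy—combining Lemma~\ref{lemma:hierlemma} with Lemma~\ref{lemma:BFS_loop_estimate} and letting $m\to\infty$—is exactly the paper's, and your treatment of the limit interchange is correct. But your bridge from the sparse subsequence $k_j=2j^2$ to general $n$ rests on a false claim. You assert that $\Cov_{\infty,\beta}[(0,0);(0,n)]$ is non-increasing in $n$ because any cycle surrounding both $\F_{(0,0)}$ and $\F_{(0,n)}$ must also surround $\F_{(0,n')}$ for every $n'\le n$. That is not true: the interior of a self-avoiding cycle in the plane is simply connected but need not be convex, so a $C$-shaped (horseshoe) cycle can enclose $\F_{(0,0)}$ at one tip and $\F_{(0,n)}$ at the other while an intermediate face $\F_{(0,n')}$ sits in the gap of the $C$, outside the interior. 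The special cycles produced in the proof of Lemma~\ref{lemma:hierlemma} do surround every $\F_{(0,\ell)}$ with $0\le\ell\le n'$, but that is because they are engineered to cross the vertical axis only at $(0,0)$ and $(0,n'+1)$; a generic cycle contributing to the covariance has no such constraint. Nothing in the paper establishes monotonicity of the covariance in the separation (the monotonicity results there are in the box size $m$, not the distance between faces), so this step is a genuine gap.

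The repair is already built into Lemma~\ref{lemma:hierlemma}, and you half-noticed it when you wrote ``equivalently, choosing $n'=k_j$.'' That lemma holds for \emph{any} $n\le n'<m$: the loop scale $n'$ may strictly exceed the separation $n$. So for a given $n$, take $n'$ to be the smallest integer $\ge n$ of the form $2k^2$, apply Lemma~\ref{lemma:hierlemma} at this pair $(n,n')$, let $m\to\infty$, and then invoke Lemma~\ref{lemma:BFS_loop_estimate}. Since $n'/n\to1$, the $o(n')$ error becomes $o(n)$. This is exactly the paper's proof; your version differs only in inserting the spurious monotonicity step, which should be deleted in favour of using the $n'$ freedom directly.
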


\begin{proof}
    For fixed $n\in\mathbb Z_{\geq 1}$, we let $n'$ denote the smallest
    integer larger than or equal to $n$ that can be written in the form $2k^2$ for some integer $k$.
    By Lemmas~\ref{lemma:BFS_loop_estimate} and~\ref{lemma:hierlemma}
    and the fact that the covariance matrix is non-decreasing in $m$,
    we have
    \begin{multline}
        \Cov_{\infty,\beta}[(0,0);(0,n)]
        \geq
        \liminf_{m\to\infty}
        \sum_{\omega\in\GammaLoop{n'+1}}
        \int
        z_{m,\beta}(\tau)d\rho_{\omega^*}(\tau)
        \\
        \geq
        e^{-2\massXY{\beta}n'+o(n')}
        =
        e^{-2\massXY{\beta}n+o(n)}.
    \end{multline}
    For the last step one simply observes that $n'/n\to 1$
    as $n\to\infty$.
\end{proof}

\section{Lower bound on the XY model correlation length}
\label{sec:lowerboundXYcorr}

Recall that $h$ denotes the height function,
which maps faces to integers.
For any integer $z\in\Z$, let $\Sign{z}\in\{-1,0,+1\}$ denote the
\emph{sign} of $z$.
Similarly, we write $\Sign{h}$ denote random function which assigns
signs to the faces of the square lattice.
We write $\SigCov$ for $\Cov$ except that the height function
$h$ is replaced by $\Sign{h}$ in the definition,
that is, 
\[
    \SigCov_{n,\beta}:\Z^2\times\Z^2\to[0,1],\,
    (a,b)
    \mapsto
    \E_{n,\beta/2}[\Sign{h(\F_a)}\Sign{h(\F_b)}].
\]
This function is non-decreasing in $n$~\cite[Lemma~9.1]{arXiv.2211.14365},
and we let \(\SigCov_{\infty,\beta}:=    
\lim_{n\to\infty}
\SigCov_{n,\beta}
\).
The following lemma asserts that the covariance matrix for the sign of $h$
yields an alternative definition for the mass.

\begin{lemma}[{\cite[Lemma~10.2]{arXiv.2211.14365}}]
    \label{lemma:altmassdef}
    For any $\beta\in [0,\infty)$ we have
    \[
        \massHeight{\beta}=
        \liminf_{k\to\infty}-\frac1k\log\SigCov_{\infty,\beta}[(0,0);(0,k)].    
    \]
\end{lemma}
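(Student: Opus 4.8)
The plan is to show that the mass defined via the covariance of the height function, equation~\eqref{eq:DefMassHeight}, coincides with the mass defined via the covariance of the \emph{signs} of the height function. The key point is the Ising coupling of the height function: conditionally on $|h|$, the signs $\Sign{h}$ are distributed as a ferromagnetic Ising model, and so the correlations $\Sign{h(\F_a)}\Sign{h(\F_b)}$ are nonnegative and satisfy Griffiths-type monotonicity. I would first record the trivial comparison: since $\Sign{h}$ vanishes exactly where $h$ does and is bounded by $1$ in absolute value, one has pointwise
\[
    \SigCov_{n,\beta}[a;b]
    \le
    \Cov_{n,\beta}[a;b]
\]
and likewise in the $n\to\infty$ limit (using dominated/monotone convergence and the fact that both sequences are monotone in $n$ by the Ising coupling, cf.\ Lemma~\ref{LEMMA:FINAL}). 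Together with $\SigCov\wedge 1 = \SigCov$ this immediately yields $\massHeight{\beta}\ge\liminf_k -\frac1k\log\SigCov_{\infty,\beta}[(0,0);(0,k)]$, i.e.\ one of the two inequalities.

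For the reverse inequality I would bound $\Cov$ in terms of $\SigCov$ from above, at the cost of a multiplicative correction that is subexponential in $k$. The natural route is to control the tail of $|h|$: the height function is a member of the class studied in~\cite{lammers2021delocalisation,arXiv.2211.14365}, and for such height functions one has uniform (in $n$) exponential or Gaussian-type tail bounds on $|h(\F_a)|$, with constants depending only on $\beta$. Writing $h(\F_a)h(\F_b) = \Sign{h(\F_a)}\Sign{h(\F_b)}\cdot |h(\F_a)|\,|h(\F_b)|$ and applying Cauchy--Schwarz together with the FKG/Ising-coupling correlation inequalities, one expects an estimate of the shape
\[
    \Cov_{\infty,\beta}[a;b]
    \le
    \SigCov_{\infty,\beta}[a;b]\cdot
    \bigl(\text{uniformly bounded moment factor}\bigr),
\]
or, if the moments of $|h|$ are not uniformly bounded in the delocalised regime, a factor growing at most polynomially in $k=|b-a|$ (using that $\Cov_{\infty,\beta}[(0,0);(0,0)]$ grows at most logarithmically, by the dichotomy theorem, so the relevant pointwise variances are $O(\log k)$). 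Either way the correction is $e^{o(k)}$, which gives $\massHeight{\beta}\le\liminf_k -\frac1k\log\SigCov_{\infty,\beta}[(0,0);(0,k)]$. Combining the two inequalities proves the lemma.

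There is a subtlety in the delocalised case worth flagging: there $\Cov_{\infty,\beta}\equiv\infty$ and $\massHeight{\beta}=0$, so one must check that $\SigCov_{\infty,\beta}[(0,0);(0,k)]$ also decays subexponentially, i.e.\ does not drop off exponentially fast even though it stays bounded by $1$. This should follow from the same Ising-coupling picture: a lower bound on $\SigCov$ in terms of $\Cov$ can be obtained by noting that $|h|$ has uniformly bounded-below probability of being small, or more robustly via the FK--Ising representation, where $\SigCov_{\infty,\beta}[a;b]$ equals the probability that $\F_a$ and $\F_b$ are joined in the FK--Ising percolation associated to the (random) coupling constants, and one shows this percolation has no exponential decay when $\Cov$ is infinite. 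The monotonicity of the joint law of $(|h|,\text{FK edges})$ in $n$ quoted in Subsection~\ref{subsec:phtwo} is exactly what makes the $n\to\infty$ limits well behaved here.

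The main obstacle I anticipate is making the comparison $\Cov \asymp \SigCov$ (up to $e^{o(k)}$) fully rigorous in \emph{both} phases simultaneously using only the soft inputs available: one needs a uniform-in-$n$ tail bound on $|h(\F_a)|$ (for the localised direction this is clean, but in the delocalised regime one only has the logarithmic variance growth, so the argument must go through second moments and the a priori $O(\log k)$ variance bound rather than uniform exponential moments), and one needs the FK--Ising / Ising-coupling machinery to convert sign correlations into connection probabilities. Assembling these while keeping every estimate uniform in $n$ before passing to the limit is where the care is required; the exponential rates themselves match trivially once the polynomial/logarithmic corrections are shown to be $e^{o(k)}$.
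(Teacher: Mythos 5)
Your overall strategy matches the paper's — the trivial inequality $\SigCov_{n,\beta}\le\Cov_{n,\beta}$ handles one direction, and the reverse direction requires showing $\Cov\le\SigCov\cdot e^{o(k)}$ — but the key technical step you propose does not go through as stated, and you have effectively skipped the delocalised case.

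\emph{The Cauchy--Schwarz route gives the wrong rate.} Writing $A=\{\connect{\F_a}{\alpha}{\F_b}\}$ so that $\Cov=\tilde\E[1_A\,|h(\F_a)h(\F_b)|]$ and $\SigCov=\tilde\P[A]$, Cauchy--Schwarz yields
\[
\Cov_{\infty,\beta}[a;b]\;\le\;\sqrt{\SigCov_{\infty,\beta}[a;b]}\cdot\sqrt{\tilde\E[|h(\F_a)h(\F_b)|^2]},
\]
i.e.\ $\sqrt{\SigCov}$ rather than $\SigCov$ times a moment factor. Even if the second factor is uniformly bounded (true in the localised regime), this loses a factor of $2$ in the exponential rate, giving only $\massHeight\le\tfrac12\liminf_k(-\tfrac1k\log\SigCov)$ — not the identity. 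You would need to replace Cauchy--Schwarz by Hölder with exponent tending to $1$ (exploiting that $|h(x)|$ has exponential tails so its $q$-th moments grow only polynomially in $q$), or, as the paper does, by the sharper observation that if the law of $|h(x)|$ is symmetric, log-concave, and has uniform exponential tails $\tilde\P[|h(x)|\ge N]\le Ce^{-\delta N}$, then conditional on any event $A$ of probability $p$ it is stochastically dominated by $\tfrac1\delta\log(C/p)+\Exp{\delta}$, giving $\tilde\E[1_A|h(\F_a)h(\F_b)|]=O\!\bigl(p(\log p)^2\bigr)$. That linear-in-$p$ bound with only a logarithmic correction is exactly what yields $\Cov\le O(\SigCov(\log\SigCov)^2)=\SigCov\cdot e^{o(k)}$, and it genuinely uses the log-concavity input (\cite[Lemma~8.2.4]{SHEFFIELD}), not just a tail bound.

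\emph{The delocalised case is underspecified.} When $\massHeight{\beta}=0$ one has $\Cov_{\infty,\beta}\equiv\infty$, so there is no comparison of the form $\Cov\le\SigCov\cdot(\cdots)$ to carry out, and your appeal to the $O(\log k)$ variance bound is not applicable (that estimate concerns $\Cov_{n,\beta}[0;0]$ growing in $n$, not a finite limit). The paper instead argues directly that $\SigCov_{\infty,\beta}[a;b]=1$ for $a\ne b$: since $\Cov_{n,\beta}[a;a]\to\infty$ and the law of $h(\F_a)$ is symmetric and log-concave at each $n$, in fact $|h(\F_a)|\to\infty$ in probability; along a fixed finite face-path from $\F_a$ to $\F_b$ all $|h|$-values then blow up, so the Ising coupling constants along the path blow up and the path becomes $\alpha$-open with probability tending to $1$. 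Your remark that ``FK--Ising has no exponential decay when $\Cov$ is infinite'' is in the right spirit but is not an argument; without the log-concavity step turning divergent variance into divergence in probability, one does not get the required $\SigCov\to1$, only non-exponential decay, which is weaker than what is needed for the statement of the lemma.
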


This alternative definition of the mass is more useful
to work with when lower bounding the two-point function of the XY model.

\begin{lemma}
    For any $\beta\in [0,\infty)$, $n\in\Z_{\geq 0}$, and $a,b\in\Z^2$, we have
    \[
        \P_{n,\beta/2}[\exists\eta\in C(\Pi):\text{$\eta$ surrounds $\F_a$ and $\F_b$}]
        \geq
        \SigCov_{n,\beta}[a;b].
    \]
\end{lemma}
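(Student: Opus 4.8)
The plan is to condition on the unoriented cycle decomposition $\bar C(\Pi)$ and to invoke the coin-flip symmetry recorded in Proposition~\ref{propo:flips}. First I would recall, exactly as in the proof of Lemma~\ref{lemma:cov}, that writing $\CA(\Pi)$ and $\CB(\Pi)$ for the sets of cycles in $C(\Pi)$ surrounding $\F_a$ and $\F_b$ respectively, and $\epsilon_\eta\in\{-1,+1\}$ for the orientation of a cycle $\eta$ (with the convention $+1$ for clockwise and $-1$ for counterclockwise), one has
\[
    h(\F_a)=\sum_{\eta\in\CA(\Pi)}\epsilon_\eta
    \qquad\text{and}\qquad
    h(\F_b)=\sum_{\eta\in\CB(\Pi)}\epsilon_\eta.
\]
The sets $\CA(\Pi)$ and $\CB(\Pi)$ are measurable with respect to $\bar C(\Pi)$, since whether a cycle surrounds a face is independent of its orientation, and every cycle in $\CA(\Pi)\cup\CB(\Pi)$ has cardinality at least three because a cycle of cardinality two traverses a single edge back and forth and surrounds no face. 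Hence, by the second part of Proposition~\ref{propo:flips}, conditionally on $\bar C(\Pi)$ the signs $(\epsilon_\eta)_{\eta\in\CA(\Pi)\cup\CB(\Pi)}$ are i.i.d.\ uniform on $\{-1,+1\}$.

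Next I would control the conditional expectation $\E_{n,\beta/2}\bigl[\Sign{h(\F_a)}\Sign{h(\F_b)}\bigm|\bar C(\Pi)\bigr]$ in two cases. If $\CA(\Pi)\cap\CB(\Pi)=\varnothing$, then $h(\F_a)$ and $h(\F_b)$ are functions of disjoint families of independent fair signs, hence conditionally independent; as each is a symmetric random variable this forces $\E_{n,\beta/2}\bigl[\Sign{h(\F_a)}\bigm|\bar C(\Pi)\bigr]=0$, so the conditional expectation of the product vanishes. If instead $\CA(\Pi)\cap\CB(\Pi)\neq\varnothing$, I would use only the trivial bound $\Sign{h(\F_a)}\Sign{h(\F_b)}\leq 1$. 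In either case
\[
    \E_{n,\beta/2}\bigl[\Sign{h(\F_a)}\Sign{h(\F_b)}\bigm|\bar C(\Pi)\bigr]
    \leq
    1_{\{\CA(\Pi)\cap\CB(\Pi)\neq\varnothing\}},
\]
and the indicator on the right is $\bar C(\Pi)$-measurable. Taking expectations, and noting that $\CA(\Pi)\cap\CB(\Pi)\neq\varnothing$ holds precisely when some cycle of $C(\Pi)$ surrounds both $\F_a$ and $\F_b$, gives $\SigCov_{n,\beta}[a;b]\leq\P_{n,\beta/2}[\exists\eta\in C(\Pi):\eta\text{ surrounds }\F_a\text{ and }\F_b]$, which is the assertion.

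I do not expect a real obstacle here: once the conditioning on $\bar C(\Pi)$ is set up, everything is elementary. The one point deserving a line of justification is the treatment of cardinality-two cycles, which are deliberately omitted from the clean coin-flipping statement of Proposition~\ref{propo:flips} but are degenerate and surround no face, so they never contribute to $h(\F_a)$ or $h(\F_b)$ and may be ignored. The remaining verifications — that $\CA(\Pi)$ and $\CB(\Pi)$ are determined by $\bar C(\Pi)$ and consist of genuinely flippable cycles — are routine bookkeeping.
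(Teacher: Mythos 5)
Your proof is correct and follows essentially the same route as the paper's. The paper also invokes Proposition~\ref{propo:flips} and observes that on the event that no cycle surrounds both faces, the two signs may be flipped independently without changing the law, whence the conditional expectation of the product vanishes; you simply spell out the conditioning on $\bar C(\Pi)$ explicitly, phrase it via conditional independence and symmetry rather than via invariance under sign-flips, and take care of the degenerate cardinality-two cycles, all of which are faithful elaborations of the paper's argument rather than a different approach.
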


\begin{proof}
    Let $N$ denote the number of cycles in $C(\Pi)$ surrounding both $\F_a$ and $\F_b$.
    Note that Proposition~\ref{propo:flips} implies that
    conditional on $\{N=0\}$,
    the law of the signs of $h$ at $\F_a$ and $\F_b$ is invariant under 
    tossing two independent fair coins to decide if the signs at the two
    faces should be inverted or not.
    In particular,
    \[
        \E_{n,\beta/2}[1_{\{N=0\}}\Sign{h(\F_a)}\Sign{h(\F_b)}]=0.
    \]
    This immediately implies the lemma.
\end{proof}

\begin{lemma}
    For any $\beta\in [0,\infty)$, $n\in\Z_{\geq 0}$, and any distinct $x,y\in\Lambda_n$, we have
    \[
        2\langle\sigma_x\bar\sigma_y\rangle^2_{n,\beta}
        \geq
        \P_{n,\beta/2}[\exists\eta\in C(\Pi):x,y\in V(\eta)].
    \]
\end{lemma}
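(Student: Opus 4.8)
The idea is to dominate the event on the right by an exploration event and then, after two applications of the backbone expansion, to recognise it as (twice) the square of the two-point function. Throughout write $\M=\M_{n,\beta/2}$ and $\P=\P_{n,\beta/2}$, and recall that $\beta-2\tau$ plays the role of a reduced radius-squared function.

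\emph{Reduction to an exploration.} First I would run, under $\P$, the walk $\omega$ started at $x$ which at each step follows the outgoing Poisson edge of largest time not yet used, and which never stops while at $x$ until every outgoing Poisson edge incident to $x$ has been used; since $\partial\n=0$ this walk almost surely returns to $x$. By Proposition~\ref{pro:loop_decomp_lemma}, applied with this exhaustive stopping rule, the Poisson edges it uses form exactly $\cup H$ with $H=\{\eta'\in C(\Pi):\eta\preceq\eta'\}$, where $\eta$ is the cycle through the last — hence lowest time — outgoing edge at $x$. Then $a_x(\eta)$ is the smallest activation time at $x$ among all cycles through $x$, so every cycle through $x$ lies in $H$; in particular, if some $\eta^*\in C(\Pi)$ has $x,y\in V(\eta^*)$ then $y\in V(\cup H)$, which is the vertex set visited by $\omega$. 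Hence
\[
    \P[\exists\eta\in C(\Pi):x,y\in V(\eta)]\le\P[\mathcal E],\qquad \mathcal E:=\{\omega\text{ visits }y\}.
\]

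\emph{Two expansions.} Stopping $\omega$ at its first visit to $y$ produces a walk in $\Gamma^{(1)}_{xy}$. Peeling the exploration one Poisson edge at a time as in Section~\ref{sec:twopoint}, and applying Corollary~\ref{cor:npointsigma} to the leftover configuration (whose source function is forced to be $1_y-1_x$), gives, using $\tau_y=0$,
\[
    \P[\mathcal E]
    =\sum_{\omega\in\Gamma^{(1)}_{xy}}\int_{\{\tau\le\beta/2\}}
    z_{n,\beta}(\tau)\,\frac{\sqrt\beta}{\sqrt{\beta-2\tau_x}}\,
    \langle\sigma_x\bar\sigma_y\rangle_{n,\beta-2\tau}\,d\rho_{\omega^*}(\tau).
\]
The crucial point is that $\sqrt\beta/\sqrt{\beta-2\tau_x}$ is precisely the reciprocal of the radius prefactor in the backbone expansion~\eqref{2p.naive} for the reduced model, so re-expanding $\langle\sigma_x\bar\sigma_y\rangle_{n,\beta-2\tau}$ by~\eqref{2p.naive} cancels it and produces
\[
    \frac{\sqrt\beta}{\sqrt{\beta-2\tau_x}}\langle\sigma_x\bar\sigma_y\rangle_{n,\beta-2\tau}
    =\sum_{\omega'\in\Gamma^{(1)}_{yx}}\int z_{n,\beta-2\tau}(\tau')\,d\rho_{\omega'^*}(\tau').
\]
Since $z_{n,\beta}(\tau)\,z_{n,\beta-2\tau}(\tau')=z_{n,\beta}(\tau+\tau')$, the Simplex lemma (Lemma~\ref{lemma:simplex}) merges $\tau$ and $\tau'$ into a single time profile along the concatenated walk, and one obtains
\[
    \P[\mathcal E]
    =\sum_{\omega\in\Gamma^{(1)}_{xy}}\ \sum_{\omega'\in\Gamma^{(1)}_{yx}}
    \int\!\!\int z_{n,\beta}(\tau+\tau')\,d\rho_{\omega^*}(\tau)\,d\rho_{\omega'^*}(\tau').
\]
Separately, expanding $\langle\sigma_x\bar\sigma_y\rangle^2_{n,\beta}=\langle\sigma_y\bar\sigma_x\rangle_{n,\beta}\,\langle\sigma_x\bar\sigma_y\rangle_{n,\beta}$ through~\eqref{2p.naive} in the two opposite directions yields exactly the same double walk sum, but weighted by $z_{n,\beta}(\tau)\,z_{n,\beta}(\tau')$ in place of $z_{n,\beta}(\tau+\tau')$.

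\emph{Conclusion.} It then remains to absorb the discrepancy between $z_{n,\beta}(\tau+\tau')$ and $z_{n,\beta}(\tau)\,z_{n,\beta}(\tau')$ at the cost of the factor $2$. This I would do by re-expanding one of the two two-point factors instead by~\eqref{2p.anytime} with a free endpoint time $s\in[0,\beta/2]$ which is then integrated out — exactly the passage from~\eqref{2p.anytime} to~\eqref{2p.integrated} that is responsible for the numerical constant $2$ — while using the Ginibre inequality~\eqref{Gin.a}, the domain monotonicity~\eqref{Mon}, and the monotonicity of $z_{n,\beta}$ to dominate the time profiles. Matching the two walk sums while keeping the constant exactly $2$ is the part I expect to demand the most care: the surplus time burnt at $x$ by the walks $\omega\omega'$ (which visit $x$ more than once, because the exhaustive walk may need several excursions from $x$ before reaching $y$) has to be compensated precisely by the freedom in the endpoint parameter $s$. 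Everything preceding it — the reduction via Proposition~\ref{pro:loop_decomp_lemma} and the two backbone expansions — is a routine application of the framework of Sections~\ref{sec:Poisson}--\ref{sec:comb}.
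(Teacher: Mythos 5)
Your reduction step is correct and clean: running the exhaustive highest-time exploration from $x$ and invoking Proposition~\ref{pro:loop_decomp_lemma}, the cycle through the last-used outgoing edge at $x$ has minimal activation time at $x$, so every cycle through $x$ lies in $H$, and $\P[\exists\eta\in C(\Pi):x,y\in V(\eta)]\leq\P[\mathcal E]$ follows. The two backbone expansions, including the cancellation of the prefactor $\sqrt\beta/\sqrt{\beta-2\tau_x}$ against its reciprocal from~\eqref{2p.naive} and the merging of $z_{n,\beta}(\tau)\,z_{n,\beta-2\tau}(\tau')$ into $z_{n,\beta}(\tau+\tau')$ via the Simplex lemma, are also correct.

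The gap is at exactly the point you flag, and I do not think your proposed repair can close it. After the two expansions you need
\[
\sum_{\omega\in\Gamma^{(1)}_{xy}}\sum_{\omega'\in\Gamma^{(1)}_{yx}}\int\!\!\int z_{n,\beta}(\tau+\tau')\,d\rho_{\omega^*}d\rho_{\omega'^*}\;\leq\;2\sum_{\omega\in\Gamma^{(1)}_{xy}}\sum_{\omega'\in\Gamma^{(1)}_{yx}}\int\!\!\int z_{n,\beta}(\tau)\,z_{n,\beta}(\tau')\,d\rho_{\omega^*}d\rho_{\omega'^*},
\]
but the Ginibre inequality~\eqref{Gin.a} gives $z(\tau+\tau')\geq z(\tau)\,z(\tau')$ on disjoint supports, which is the \emph{wrong} direction; there is no upper bound of the form $z(\tau+\tau')\leq C\,z(\tau)z(\tau')$ in the toolbox of the paper. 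Re-expanding through~\eqref{2p.anytime}/\eqref{2p.integrated} only trades the weight $z(\tau+\tau')$ for $\frac{2}{\beta-2\tau_x}\int z(\tau+\tau')d\rho_{\omega'}$, and the factor $\frac{2}{\beta-2\tau_x}\geq\frac2\beta$ again points the wrong way for an upper bound.

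The paper sidesteps this entirely with a different idea that your proposal never invokes: the \emph{time-inversion symmetry} of the cycle decomposition, Proposition~\ref{pro:time_inversion}. It uses the parametrized Exploration~\ref{explo:II} with a free stopping level $s\in[0,\beta/2]$, so that the walk covers precisely the cycles through $x$ with activation time $a_x\geq\beta/2-s$, giving $\P[y\in\omega]\geq B(s)$; the identity $B(s)+B(\beta/2-s)\geq\P[\exists\eta:x,y\in V(\eta)]$ is exactly the time-inversion statement, and integrating $s$ over $[0,\beta/2]$ is what produces the factor $\tfrac12$ on the right. On the analytic side the same integration converts $\rho_{\omega^*}$ into $\rho_\omega$, and the subsequent application of~\eqref{2p.integrated} then yields the favourable prefactor $\tfrac12\sqrt{\beta(\beta-2\tau_x)}\leq\tfrac\beta2$, which is what makes the upper bound $\leq\langle\sigma_x\bar\sigma_y\rangle^2_{n,\beta}$ go through. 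In short: the factor $2$ comes from the symmetry of activation times under $\tau\mapsto\beta/2-\tau$, not from any property of $z$, and without Proposition~\ref{pro:time_inversion} your reduction to $\P[\mathcal E]$ overshoots without a mechanism to win it back.
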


\begin{proof}
    Recall that $a_x(\eta)$ denotes the activation time of a cycle
    $\eta$.
    Suppose that we start an exploration at $x$ of type Exploration~\ref{explo:II} until the remaining local time at $x$
    hits some constant $s\in [0,\beta/2]$.
    Write $\omega$ for the exploration path so obtained.
    Recall from~\eqref{2p.anytime} that the corresponding expansion is
    \[
        1
        =
        \langle\sigma_x\bar\sigma_x\rangle_{\beta}
        = 
        \sum_{\omega\in\Gamma_{xx}}\int_{\{\tau_x\leq s\}} z_{\beta}(\tau\vee 1_x\cdot s)d\rho_{\omega^*}(\tau).
    \]
    Recall from Lemma~\ref{pro:loop_decomp_lemma} that if $C(\Pi)$ contains a cycle which 
    hits both $x$ and $y$ and whose activation time at $x$
    is at least $\beta/2-s$, then $\omega$ will certainly hit $y$.
    Thus, we get
    \begin{multline}
        \numberthis
        \label{eq:LHSSS}
        \sum_{\omega\in\Gamma_{xx}:\: y\in\omega}\int_{\{\tau_x\leq s\}} z_{\beta}(\tau\vee 1_x\cdot s)d\rho_{\omega^*}(\tau)
        =
        \P_{n,\beta/2}[y\in\omega]    
        \\
        \geq
        \P_{n,\beta/2}[\exists\eta\in C(\Pi):x,y\in V(\eta),\, a_x(\eta)\geq \beta/2-s]=:B(s).
    \end{multline}
    We would like to lose the dependance on the activation time.
    By Proposition~\ref{pro:time_inversion}, we have
    \[
        B(s)+B(\beta/2-s)\geq \P_{n,\beta/2}[\exists\eta\in C(\Pi):x,y\in V(\eta)].
    \]
    By integrating $s$ from $0$ to $\beta/2$ on the left hand side of~\eqref{eq:LHSSS},
    we get
    \[
        \frac2{\beta}
        \sum_{\omega\in\Gamma_{xx}:\: y\in\omega}\int z_{\beta}(\tau)d\rho_{\omega}(\tau)
        \geq \frac12  \P_{n,\beta/2}[\exists\eta\in C(\Pi):x,y\in V(\eta)].
    \]
    The left hand side may be rewritten into
    \begin{multline}
        \frac2{\beta}
        \sum_{\omega\in\Gamma_{xy}^{(1)}}
        \int
        \left[
            \sum_{\tilde\omega\in\Gamma_{yx}}
            \int z_{\beta-2\tau}(\tilde\tau)d\rho_{\tilde\omega}(\tilde\tau)
        \right]
        z_{\beta}(\tau)d\rho_{\omega^*}(\tau)
        \\
        \stackrel{\eqref{2p.integrated}}=
        \frac2{\beta}
        \sum_{\omega\in\Gamma_{xy}^{(1)}}
        \int
        \left[
            \frac{\sqrt{\beta (\beta-2\tau_x)}}{2}
            \langle\sigma_x\bar\sigma_y\rangle_{n,\beta-2\tau}
        \right]
        z_{\beta}(\tau)d\rho_{\omega^*}(\tau)
        \\
        \stackrel{\eqref{Gin.a}}\leq 
            \langle\sigma_x\bar\sigma_y\rangle_{n,\beta}
        \sum_{\omega\in\Gamma_{xy}^{(1)}}
        \int
        z_{\beta}(\tau)d\rho_{\omega^*}(\tau)
        \stackrel{\eqref{2p.naive}}=
        \langle\sigma_x\bar\sigma_y\rangle_{n,\beta}^2
    \end{multline}
    The lemma follows by combining this inequality with the previous display.
\end{proof}

\begin{lemma}
    \label{lemma:lower_bound_xy_corr}
    For any $\beta\in [0,\infty)$, we have $\massXY{\beta}\leq\massHeight{\beta}/2$.
\end{lemma}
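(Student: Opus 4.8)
The plan is to derive Lemma~\ref{lemma:lower_bound_xy_corr} by chaining together the three preceding lemmas in this section together with the alternative mass formula of Lemma~\ref{lemma:altmassdef}. Concretely, apply the three lemmas with $a=(0,0)$ and $b=(0,k)$, and with $x$ and $y$ chosen to be a pair of vertices that are guaranteed to lie on the boundary of any cycle surrounding both $\F_{(0,0)}$ and $\F_{(0,k)}$ — for instance $x=(0,0)$ itself and $y=(0,k+1)$, or more carefully any two vertices on the vertical axis that a surrounding cycle must cross. The point is that if a cycle $\eta\in C(\Pi)$ surrounds both faces $\F_{(0,0)}$ and $\F_{(0,k)}$, then by planarity $\eta$ must cross the segment of the vertical axis between these faces, so some such pair $x,y$ of axis vertices lies in $V(\eta)$. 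This gives the chain of inequalities
\[
    2\langle\sigma_x\bar\sigma_y\rangle^2_{n,\beta}
    \geq
    \P_{n,\beta/2}[\exists\eta\in C(\Pi):x,y\in V(\eta)]
    \geq
    \P_{n,\beta/2}[\exists\eta\in C(\Pi):\text{$\eta$ surrounds $\F_{(0,0)}$ and $\F_{(0,k)}$}]
    \geq
    \SigCov_{n,\beta}[(0,0);(0,k)],
\]
where the first inequality is the third lemma of this section, the middle inequality is the planarity observation just described, and the last is the second lemma of this section.

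Next I would pass to the limit $n\to\infty$. Both $\SigCov_{n,\beta}$ and the XY two-point function are monotone in $n$ (the former by Lemma~\ref{LEMMA:FINAL}, the latter by Ginibre), and the XY two-point function converges to $\langle\sigma_x\bar\sigma_y\rangle_{\Z^2,\beta}$, so in the limit
\[
    2\langle\sigma_x\bar\sigma_y\rangle^2_{\Z^2,\beta}
    \geq
    \SigCov_{\infty,\beta}[(0,0);(0,k)].
\]
Here $x,y$ are at distance comparable to $k$ (at most $k+1$, say, for the choice $x=(0,0)$, $y=(0,k+1)$). Taking $-\frac1k\log$ of both sides, using $\liminf$, and applying the definition of $\massXY{\beta}$ (together with the fact that $\langle\sigma_{(0,0)}\bar\sigma_{(0,k+1)}\rangle_{\Z^2,\beta}=\langle\sigma_{(0,0)}\bar\sigma_{(k+1,0)}\rangle_{\Z^2,\beta}$ by lattice symmetry) gives
\[
    2\massXY{\beta}
    \leq
    \liminf_{k\to\infty}-\frac1k\log\SigCov_{\infty,\beta}[(0,0);(0,k)]
    =
    \massHeight{\beta},
\]
where the final equality is Lemma~\ref{lemma:altmassdef}. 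Dividing by $2$ yields the claim. One should also dispatch the degenerate case $\massHeight{\beta}=0$ separately (or observe the inequality is trivially satisfied), and note that the constant factor $2$ and the off-by-one in the distance are both washed out by the $-\frac1k\log$ and $k\to\infty$.

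The main obstacle I anticipate is the planarity bookkeeping in the middle step: one must argue cleanly that a self-avoiding cycle in $C(\Pi)$ surrounding two faces on the vertical axis necessarily passes through a specified pair of axis vertices, or at least through some pair from a bounded list of candidate pairs — if it is a bounded list rather than a single fixed pair, one absorbs the finite union bound into the argument, which again costs nothing after taking $-\frac1k\log$. The cleanest route is probably to use $x=(0,0)$ and $y=(0,k+1)$ and observe that any cycle surrounding both $\F_{(0,0)}$ and $\F_{(0,k)}$ separates these faces from the outer face $\F_\infty$, hence its interior contains the whole column of faces between them, and in particular the cycle must run along or cross the vertical axis both below $\F_{(0,0)}$ and above $\F_{(0,k)}$; a short topological argument then places both $x$ and $y$ on $V(\eta)$, or reduces to finitely many nearby alternatives. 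Everything else is a routine limit-and-logarithm manipulation.
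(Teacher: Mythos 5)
Your overall shape — chaining the two sandwich lemmas of this section, passing to the $n\to\infty$ limit, and then invoking Lemma~\ref{lemma:altmassdef} — is the same as the paper's, but the middle link in your chain is backwards and this is a genuine gap. For a \emph{fixed} pair of vertices $x,y$, the event $\{\exists\eta\in C(\Pi):x,y\in V(\eta)\}$ is a \emph{subset} of, not a superset of, the event that some cycle surrounds both $\F_{(0,0)}$ and $\F_{(0,k)}$. A cycle surrounding the two faces must visit some $(0,j)$ with $j\leq 0$ and some $(0,j')$ with $j'\geq k+1$, but $j$ and $j'$ are not constrained to a bounded window: the cycle can cross the vertical axis arbitrarily far below $0$ and above $k$. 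So the "bounded list of candidate pairs" you hope for does not exist, and the correct replacement is an \emph{infinite} union bound
\[
\P_{n,\beta/2}\bigl[\exists\eta\text{ surrounding both}\bigr]
\leq
\sum_{j\leq 0}\sum_{j'\geq k+1}
\P_{n,\beta/2}\bigl[\exists\eta: (0,j),(0,j')\in V(\eta)\bigr]
\leq
2\sum_{j\leq 0}\sum_{j'\geq k+1}\langle\sigma_{(0,j)}\bar\sigma_{(0,j')}\rangle_{n,\beta}^2 .
\]
This is exactly the sum over $Q^m_-\times Q^m_+$ in the paper.

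Once you are forced into an infinite sum, you also need a reason for it to converge, which is why the paper argues by contradiction starting from $\massXY{\beta}>\massHeight{\beta}/2+\varepsilon$: this hypothesis forces $\massXY{\beta}>0$, giving the geometric decay that makes the double sum finite and of the correct order $e^{-2\massXY{\beta}m+o(m)}$. Relatedly, you have the degenerate case backwards: the statement is $\massXY{\beta}\leq\massHeight{\beta}/2$, so the trivial case is $\massXY{\beta}=0$ (nothing to prove), not $\massHeight{\beta}=0$ (which is precisely the situation you must work to rule out). Your limit-and-logarithm manipulation at the end is fine and the use of Lemma~\ref{lemma:altmassdef} is as intended, but the proof as proposed does not go through without replacing the single-pair inequality by the infinite union bound and handling the $\massXY{\beta}=0$ (or contradiction) dichotomy explicitly.
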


\begin{proof}
    Suppose that $\massXY{\beta}>\massHeight{\beta}/2+\varepsilon$
    for some $\varepsilon>0$ in order to derive a contradiction.
    For $m\in\Z_{\geq 0}$, let
    \[
        Q_-^m:=\{(0,k):k\in\Z_{\leq 0}\};
        \qquad
        Q_+^m:=\{(0,k):k\in\Z_{\geq m}\}.
    \]
    If a cycle $\eta$ surrounds both $\F_{(0,0)}$ and $\F_{(0,m)}$,
    then it must intersect both $Q_-^m$ and $Q_+^m$.
    By the previous two lemmas, we get
    \[
        2\sum_{x\in Q_-^m,\, y\in Q_+^m}\langle
            \sigma_x\bar\sigma_y
        \rangle_{\Z^2,\beta}^2   
        \geq
        \SigCov_{\infty,\beta}[(0,0);(0,m)].
    \]
    By our assumption we have, for $m$ large enough,
    \[
        2\sum_{x\in Q_-^m,\, y\in Q_+^m}\langle
            \sigma_x\bar\sigma_y
        \rangle_{\Z^2,\beta}^2
        \leq
        2\sum_{i,j\in\Z_{\geq 0}}
        e^{-(m+i+j)(\massHeight{\beta}+2\epsilon)}
        =
        e^{-(\massHeight{\beta}+2\epsilon)m}\cdot C_{\beta,\epsilon}
    \]
    where the constant is finite and depends only on $\beta$ and $\epsilon$.
    At the same time Lemma~\ref{lemma:altmassdef} implies that
    for some subsequence $(k_m)_m\subset\Z_{\geq 0}$,
    and for $m$ sufficiently large, we have
    \[
        \SigCov_{\infty,\beta}[(0,0);(0,k_m)]
        \geq
        e^{-(\massHeight{\beta}+\epsilon)k_m}.
    \]
    The previous three displays imply that for all $m$ sufficiently large, we have
    \[e^{-(\massHeight{\beta}+\epsilon)k_m}
    \leq
    e^{-(\massHeight{\beta}+2\epsilon)k_m}\cdot C_{\beta,\epsilon}
    \]
    which is clearly a contradiction
    since the constant does not depend on $m$.
\end{proof}

\section{Proof of Theorem~\ref{thm2}}
\label{sec:corproof}

\begin{proof}[Proof of Theorem~\ref{thm2}]
    Write $\massVil{\beta}$ and $\massGaus{\beta}$ for the mass of the Villain model
    and the discrete Gaussian model at inverse temperature $\beta\in[0,\infty)$
    respectively.
    Let $A_k$ denote the square lattice graph except that each edge is replaced
    by $2^k$ edges linked in series.
    For each $k\in\Z_{\geq 1}$ we consider the XY model on $A_k$ at some inverse temperature 
    $\beta_k$.
    Then there is a sequence of inverse temperatures $(\beta_k)_k$ such that
    the XY model on the graph $A_k$ at inverse temperature $\beta_k$
    converges to the Villain model on the square lattice graph at inverse temperature $\beta$
    for an appropriate sequence of inverse temperatures $(\beta_k)_k$~\cite{aizenman2021depinning,dubedat2022random}.
    The dual height function also converges to the discrete Gaussian model at inverse temperature $\beta$.

    Theorem~\ref{thm:main_corr} applies to the XY model on each graph $A_k$,
    and therefore
    \begin{equation}
        \label{eq:vil1}
        2\massXY{A_k;\beta_k}=\massHeight{A_k;\beta_k}.
    \end{equation}
    The mass is known to be continuous in the potential on the spin side,
    which means that
    \begin{equation}
        \label{eq:vil2}
        \massXY{A_k;\beta_k}\to\massVil{\beta}.   
    \end{equation}
    Of course we would like to argue that
    \(
        \massHeight{A_k;\beta_k}\to\massGaus{\beta}
    \),
    so that we obtain 
    \[
        2\massVil{\beta}=\massGaus{\beta}
    \]
    and therefore Theorem~\ref{thm2} as a corollary.
    The mass of the height function is expected to be continuous in the potential
    but this has not actually been proved~\cite[Subsection~1.2.4]{arXiv.2211.14365}.

    Instead, we prove that the zeros of $\massVil{\cdot}$ and
    $\massGaus{\cdot}$ coincide.
    Note that~\cite{aizenman2021depinning} proves one half of this statement,
    and therefore it is sufficient to 
    assume that $\massVil{\beta}=0$ and prove that $\massGaus{\beta}=0$.
    By~\eqref{eq:vil1} and~\eqref{eq:vil2} this assumption implies
    \(
        \massHeight{A_k;\beta_k}\to 0
    \),
    which leads to $\massGaus{\beta}=0$ via~\cite[Theorem~5]{arXiv.2211.14365}.
\end{proof}

\section*{Acknowledgements}

The author is grateful to
Paul Dario, Hugo Duminil-Copin,
Jürg Fröhlich,
Trishen Gunaratnam, Romain Panis, and Junchen Rong
for stimulating suggestions and discussions regarding the XY model in general
and this work in particular.
This project has received funding from the European Research Council
(ERC) under the European Union's Horizon 2020 research and innovation programme
(grant agreement No. 757296).
During the writing of this work, the author learned that Diederik van
Engelenburg and Marcin Lis are independently preparing a manuscript demonstrating
that polynomial decay of the two-point function in the two-dimensional XY model
implies delocalisation of the dual height function.

\bibliographystyle{amsalpha}
\bibliography{bib}

\end{document}